\documentclass[11pt]{article}
\usepackage{a4}
\usepackage{amssymb}
\usepackage{amsmath}
\usepackage{amsthm}
\usepackage{enumerate}
\usepackage[pagebackref,colorlinks,citecolor=blue,linkcolor=blue]{hyperref}

\usepackage{geometry}
\geometry{verbose,tmargin=3cm,lmargin=3.8cm,rmargin=3.8cm}

\numberwithin{equation}{section}
\theoremstyle{plain}
\newtheorem{theorem}[equation]{Theorem}
\newtheorem{corollary}[equation]{Corollary}
\newtheorem{lemma}[equation]{Lemma}
\newtheorem{proposition}[equation]{Proposition}

\theoremstyle{definition}
\newtheorem{definition}[equation]{Definition}

\newtheorem{example}[equation]{Example}

\numberwithin{equation}{section}

\newcommand{\R}{{\mathbb R}}
\newcommand{\N}{{\mathbb N}}

\newcommand{\Om}{\Omega}

\providecommand{\vint}[1]{\mathchoice
          {\mathop{\vrule width 5pt height 3 pt depth -2.5pt
                  \kern -9pt \kern 1pt\intop}\nolimits_{\kern -5pt{#1}}}
          {\mathop{\vrule width 5pt height 3 pt depth -2.6pt
                  \kern -6pt \intop}\nolimits_{\kern -3pt{#1}}}
          {\mathop{\vrule width 5pt height 3 pt depth -2.6pt
                  \kern -6pt \intop}\nolimits_{\kern -3pt{#1}}}
          {\mathop{\vrule width 5pt height 3 pt depth -2.6pt
                  \kern -6pt \intop}\nolimits_{\kern -3pt{#1}}}}

\newcommand{\eps}{\varepsilon}
\newcommand{\loc}{\mathrm{loc}}

\newcommand{\BV}{\mathrm{BV}}
\newcommand{\SBV}{\mathrm{SBV}}
\newcommand{\DBV}{\mathrm{DBV}}
\newcommand{\liploc}{\mathrm{Lip}_{\mathrm{loc}}}

\newcommand{\ch}{\text{\raise 1.3pt \hbox{$\chi$}\kern-0.2pt}}

\DeclareMathOperator{\capa}{Cap}
\DeclareMathOperator{\rcapa}{cap}

\DeclareMathOperator{\Lip}{Lip}

\DeclareMathOperator{\lip}{lip}
\DeclareMathOperator{\supp}{spt}

\DeclareMathOperator{\fint}{fine-int}

\begin{document}
\title{Approximation of $\BV$ by $\SBV$ functions\\
in metric spaces
\footnote{{\bf 2010 Mathematics Subject Classification}: 30L99, 31E05, 26B30.
\hfill \break {\it Keywords\,}: metric measure space, special function of bounded variation, strict convergence, uniform approximation, jump set, variational capacity
}}
\author{Panu Lahti}
\maketitle

\begin{abstract}
In a complete metric space that is equipped with a doubling measure and
supports a Poincar\'e
inequality, we show that functions of bounded variation ($\BV$ functions) can be
approximated in the strict sense
and pointwise uniformly by special functions of bounded variation, without adding
significant jumps.
As a main tool, we study the variational $1$-capacity and its $\BV$ analog.
\end{abstract}

\section{Introduction}

In the theory of functions of bounded variation,
one is often interested in approximating a BV function
by more regular functions, see e.g. \cite{ADC,BCP,CT,KR}.
Already the definition of the total variation
in metric spaces is based on such approximations.
The variation measure of a BV function can be decomposed into three parts:
the absolutely continuous part, the Cantor part, and the jump part.
Of these, the absolutely continuous part is of the same dimension as the space,
and the jump part is of dimension one less than the space.
The Cantor part can be of any dimension between these, making it often
more difficult to analyze than the other two parts.
A function is said to be in the SBV class (special functions
of bounded variation), first introduced in \cite{ADG},
if its variation measure has no Cantor part.
The recent paper \cite{dPFP} (as well as the earlier papers
\cite{BCP,CT}) studied how SBV function in Euclidean spaces
can be approximated in the BV norm by piecewise smooth functions.
This is based on the fact that outside its jump set,
an SBV function is essentially a Sobolev function, and then it is possible
to construct convolution approximations that
are close to the original function in the Sobolev/BV norm.

Due to the lack of structure of the Cantor part, it is in some way a rather
more subtle problem to approximate a general BV function by
SBV functions,
and little seems to be known in this direction.
It is impossible to find such approximations in the BV norm
(see Example \ref{ex:sharpness of BV norm closeness})
but we show in this paper that such approximations can be obtained in the following sense;
this will be given (with some more details) in Corollary \ref{cor:approximation result}.

\begin{theorem}\label{thm:main}
Let $\Om$ be an open set and let $u\in\BV(\Om)$. Then there exists a sequence $(u_i)\subset \SBV(\Om)$
such that
\begin{itemize}
\item $u_i\to u$ in $L^1(\Om)$ and $\Vert Du_i\Vert(\Om)\to \Vert Du\Vert(\Om)$,
\item $u_i\to u$ uniformly in $\Om$,
\item $\mathcal H(S_{u_i}\setminus S_u)=0$ for all $i\in\N$.
\end{itemize}
\end{theorem}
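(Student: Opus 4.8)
The only genuine obstruction is the Cantor part $D^c u$: the absolutely continuous part of $Du$ is harmless and an $\SBV$ function may keep its jumps. Since $D^c u$ vanishes on every set of finite $\mathcal H$-measure, it cannot be ``discretized'' into characteristic functions without producing jump sets of positive $\mathcal H$-measure, so it \emph{must} be turned into an essentially Sobolev object, just as the one-dimensional Cantor function is regularized by its piecewise affine approximants. The plan is, for each $\eps>0$: keep $u$ unchanged near its ``large'' jumps, apply a discrete convolution to $u$ on a carefully chosen \emph{thin} open set carrying everything else (the Cantor part, the absolutely continuous part, and the jumps of height $<\eps$), and glue.

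For the decomposition, first note that $\Vert Du\Vert(A)\ge c\int_{A\cap S_u}|u^{\vee}-u^{\wedge}|\,d\mathcal H$ for Borel $A$, so $J_\eps:=\{x\in S_u:\ |u^{\vee}(x)-u^{\wedge}(x)|\ge\eps\}$ has finite $\mathcal H$-measure, while smearing the jumps in $S_u\setminus J_\eps$ costs at most $\eps$ in uniform norm. Both $D^c u$ and the part of $D^j u$ on $S_u\setminus J_\eps$ are singular with respect to $\mu$, hence are carried by a common $\mu$-null Borel set $N$, which (as $D^c u$ ignores the finite-$\mathcal H$-measure set $J_\eps$) may be taken disjoint from $J_\eps$; also $\mu(S_u)=0$. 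Here the variational $1$-capacity and its $\BV$ analogue come in: together with the Poincar\'e inequality and the fine structure of $S_u$ they let us select an open $G\supseteq N$ with $\mu(G)$ arbitrarily small, with $\Vert Du\Vert(G)\le\Vert Du\Vert(N)+\eps$ so that essentially no Cantor mass escapes $G$, with the precise representative of $u$ oscillating by at most $\eps$ on small balls contained in $G$, and --- where $G$ approaches $J_\eps$ --- with the one-sided traces of $u$ along $S_u$ available, so that the large jump there can be subtracted off by adding a multiple of the characteristic function of one side (itself in $\SBV$) before regularizing.

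Now on $G$ run a discrete convolution of the (locally jump-corrected) function at a fine scale adapted to $G$ --- a Whitney-type ball cover with radii shrinking towards $\partial G$, a subordinate Lipschitz partition of unity, the usual cut-off near $\partial G$ --- set $u_\eps$ to equal this inside $G$ and the precise representative of $u$ on $\Om\setminus G$, and add back the subtracted jump corrections. Inside $G$, $u_\eps$ is locally Lipschitz; on $\Om\setminus\overline G$ it equals $u$, whose only jumps there lie in $J_\eps\subseteq S_u$; hence $u_\eps\in\SBV(\Om)$ and $\mathcal H(S_{u_\eps}\setminus S_u)=0$. Then $u_\eps\to u$ in $L^1(\Om)$ since $\mu(G)\to0$; $u_\eps\to u$ uniformly since $u_\eps=u$ off $G$ while inside $G$ the discrete convolution reproduces $u$ up to the $\eps$-oscillation and the $\eps$-sized small-jump errors; and for the variation, arranging the $G$-adapted discrete convolution to be strictly variation-preserving on $G$ yields $\limsup_{\eps\to0}\Vert Du_\eps\Vert(\Om)\le\Vert Du\Vert(\Om)$, which with the lower semicontinuity bound $\liminf_{\eps\to0}\Vert Du_\eps\Vert(\Om)\ge\Vert Du\Vert(\Om)$ (from $u_\eps\to u$ in $L^1_{\loc}(\Om)$) gives $\Vert Du_\eps\Vert(\Om)\to\Vert Du\Vert(\Om)$. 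Taking $\eps=\eps_i\to0$ produces $(u_i)$; see Corollary \ref{cor:approximation result} for the full statement.

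The main obstacle is exactly the interplay between the Cantor part and the jump set: one must regularize $D^c u$ (and the small jumps) without ever smearing a jump of height comparable to $\eps$, which forces the delicate choice of $G$ and is the reason for developing the variational capacities and combining them with the Poincar\'e inequality and the one-sided structure of $S_u$. A second, purely technical, point is making the $G$-adapted discrete convolution strictly variation-preserving rather than merely bounded by a dimensional constant times $\Vert Du\Vert$ of a slight enlargement of $G$, and reducing the non-compact, unbounded case to a well-behaved one by exhaustion.
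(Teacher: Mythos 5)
Your high-level plan --- isolate the jumps of height $\ge\eps$ (a set of finite $\mathcal H$-measure and zero $\mu$-measure), regularize everything else (Cantor part, absolutely continuous part, small jumps) into an essentially Sobolev function, and glue --- is exactly the paper's strategy (Propositions \ref{prop:uniform approximation} and \ref{prop:uniform approximation for bounded functions}, Theorem \ref{thm:approximation}). But the mechanism you propose for the regularization step has a genuine gap, and it is not the ``purely technical point'' you defer at the end: it is the heart of the proof. A discrete convolution over a Whitney-type cover with a subordinate Lipschitz partition of unity yields, in a general PI space, only $\Vert Dv\Vert\le C\,\Vert Du\Vert(G')$ with a structural constant $C=C(C_d,C_P,\lambda)>1$ and a slightly enlarged set $G'$; there is no known way to ``arrange'' it to be strictly variation-preserving, and strictness is indispensable for the strict convergence $\Vert Du_i\Vert(\Om)\to\Vert Du\Vert(\Om)$. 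The paper's substitute is entirely different: on the quasiopen set where all jumps have height $<\beta$, it slices $u$ by the coarea formula into the truncations $\beta^{-1}\min\{\beta,(u-(i-1)\beta)_+\}$, replaces each slice by a Newton--Sobolev test function of essentially the same energy using the identity $\rcapa_1=\rcapa_{\BV}$ for quasiclosed subsets of quasiopen sets (Theorem \ref{thm:1capacity and BVcapacity are equal}), and sums; the layer-cake structure simultaneously delivers the uniform bound $u^{\vee}-4\beta\le v\le u^{\wedge}$. You do invoke the capacities, but only to select the thin open set $G$ --- where outer regularity of the Radon measure $\Vert Du\Vert$ already suffices --- rather than to perform the regularization, which is where they are actually needed.

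Two further points. First, uniform closeness of a discrete convolution to $u$ requires control of the oscillation of $u$ at the convolution scale, which you cannot guarantee on a set carrying the Cantor part; the paper's layer-cake construction sidesteps this. Second, your proposal to subtract off the large jumps via ``a multiple of the characteristic function of one side'' of $S_u$ before regularizing presupposes one-sided trace and orientation structure along the jump set that is delicate in metric spaces; the paper avoids it by simply deleting the large-jump set $S$ (quasiclosed, $\mu$-null) and working on the quasiopen complement, where the $L^\infty$-closeness $\Vert v-u\Vert_{L^\infty}\le 4\delta$ automatically preserves the large jumps after gluing. The gluing itself, and the verification that no new jump of positive $\mathcal H$-measure is created on $\partial G$, also needs the boundary-matching control of Lemma \ref{lem:BV pasting lemma} and \eqref{eq:w-u limit}, which your sketch asserts but does not supply.
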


The first condition is often expressed by saying that ($u_i$) converges to
$u$ \emph{strictly} in $\BV(\Om)$. The last condition expresses the fact that
the approximation procedure does not add any significant \emph{jump set};
see Section \ref{sec:preliminaries} for definitions.
It is also possible to ensure that $u_i\ge u$ and that the $u_i$'s have the same
``boundary values'' as $u$.
Thus our result shows that it is sufficient to infimize various functionals
defined for the BV class, possibly involving also boundary values and obstacles,
only over the SBV class.
We discuss some implications of the result at the end of the paper.

In order to prove the approximation result, we first study some properties
of a class of BV functions with zero boundary values, which was previously studied
in \cite{L-ZB}. This is done in Section \ref{sec:BV0 class}.
Then in Section \ref{sec:capacities} we establish
the key tool needed for the approximation result,
namely a result on capacities that should be also of
independent interest.
The \emph{variational $p$-capacity} $\rcapa_p$ is an essential concept in
nonlinear potential theory, see e.g. the monographs
\cite{BB,HKM,MZ}.
In the case $p=1$, it is natural to also consider the $\BV$ analog $\rcapa_{\BV}$
of the variational $1$-capacity, and such a notion has been studied in the metric setting
in \cite{HaSh,KKST-DG,L-Fed}.
In \cite{HaSh} the authors considered a slightly different definition of this capacity
compared to ours,
but nonetheless it follows from
\cite[Theorem 4.3, Corollary 4.7]{HaSh} that
\[
\rcapa_{\BV}(A,D)\simeq\rcapa_{\lip,1}(A,D)
\]
when $A$ is a compact subset of an open set $D$,
and $\rcapa_{\lip,1}$ is a Lipschitz version of the variational $1$-capacity.
By the sign ``$\simeq$'' we mean that the quantities are comparable, with constants
of comparison depending only on the space.
In \cite[Theorem 4.23]{L-ZB} it was then shown that in fact equality holds.
In particular, this implies that
\[
\rcapa_{\BV}(A,D)=\rcapa_{1}(A,D).
\]
In this paper we show that this equality holds much more generally,
namely whenever $A$ is a \emph{quasiclosed} set and $D$ is a \emph{quasiopen} set.
This is given in Theorem \ref{thm:1capacity and BVcapacity are equal}.

Recently, there has been much interest in studying $\BV$ functions
and other topics of analysis in the abstract setting
of metric measure spaces, see e.g. \cite{A1,AMP,M}.
The standard assumptions in this setting
are that $(X,d,\mu)$ is a complete metric space equipped with
\emph{doubling} Radon measure $\mu$, and that the space supports a
\emph{Poincar\'e inequality}. While our results seem to be mostly new even in Euclidean spaces,
in this paper we also work in such a metric space setting.

\section{Preliminaries}\label{sec:preliminaries}

In this section we introduce the definitions,
assumptions, and some standard background results used in the paper.

Throughout this paper, $(X,d,\mu)$ is a complete metric space that is equip\-ped
with a metric $d$ and a Borel regular outer measure $\mu$ satisfying
a doubling property, meaning that
there exists a constant $C_d\ge 1$ such that
\[
0<\mu(B(x,2r))\le C_d\mu(B(x,r))<\infty
\]
for every ball $B(x,r):=\{y\in X:\,d(y,x)<r\}$.
Given a ball $B=B(x,r)$ and $\beta>0$, we sometimes abbreviate $\beta B:=B(x,\beta r)$.
When we want to state that a constant $C$
depends on the parameters $a,b, \ldots$, we write $C=C(a,b,\ldots)$.
When a property holds outside a set of $\mu$-measure zero, we say that it holds
almost everywhere, abbreviated a.e.

All functions defined on $X$ or its subsets will take values in $[-\infty,\infty]$.
A complete metric space equipped with a doubling measure is proper,
that is, closed and bounded sets are compact.
Given a $\mu$-measurable set $A\subset X$, we define $L^1_{\loc}(A)$ as the class
of functions $u$ on $A$
such that for every $x\in A$ there exists $r>0$ such that $u\in L^1(A\cap B(x,r))$.
Other local spaces of functions are defined similarly.
For an open set $\Omega\subset X$,
a function is in the class $L^1_{\loc}(\Omega)$ if and only if it is in $L^1(\Om')$ for
every open $\Omega'\Subset\Omega$.
Here $\Omega'\Subset\Omega$ means that $\overline{\Omega'}$ is a
compact subset of $\Omega$.

For any  $0<R<\infty$, the codimension one
Hausdorff content of a set $A\subset X$ is
\[
\mathcal{H}_{R}(A):=\inf\left\{ \sum_{i=1}^{\infty}
\frac{\mu(B(x_{i},r_{i}))}{r_{i}}:\,A\subset\bigcup_{i=1}^{\infty}B(x_{i},r_{i}),\,r_{i}\le R\right\}.
\]
The codimension one Hausdorff measure is then defined as
\[
\mathcal{H}(A):=\lim_{R\rightarrow 0}\mathcal{H}_{R}(A).
\]

By a curve we mean a nonconstant rectifiable continuous mapping from a compact interval of the real line into $X$.
The length of a curve $\gamma$
is denoted by $\ell_{\gamma}$. We will assume every curve to be parametrized
by arc-length, which can always be done (see e.g. \cite[Theorem~3.2]{Hj}).
A nonnegative Borel function $g$ on $X$ is an upper gradient 
of a function $u$
on $X$ if for all curves $\gamma$, we have
\begin{equation}\label{eq:definition of upper gradient}
|u(x)-u(y)|\le \int_0^{\ell_{\gamma}} g(\gamma(s))\,ds,
\end{equation}
where $x$ and $y$ are the end points of $\gamma$.
We interpret $|u(x)-u(y)|=\infty$ whenever  
at least one of $|u(x)|$, $|u(y)|$ is infinite.
Upper gradients were originally introduced in \cite{HK}.

We say that a family of curves $\Gamma$ is of zero $1$-modulus if there is a 
nonnegative Borel function $\rho\in L^1(X)$ such that 
for all curves $\gamma\in\Gamma$, the curve integral $\int_\gamma \rho\,ds$ is infinite.
A property is said to hold for $1$-almost every curve
if it fails only for a curve family with zero $1$-modulus. 
If $g$ is a nonnegative $\mu$-measurable function on $X$
and (\ref{eq:definition of upper gradient}) holds for $1$-almost every curve,
we say that $g$ is a $1$-weak upper gradient of $u$. 
By only considering curves $\gamma$ in $A\subset X$,
we can talk about a function $g$ being a ($1$-weak) upper gradient of $u$ in $A$.

Given a $\mu$-measurable set $H\subset X$, we let
\[
\Vert u\Vert_{N^{1,1}(H)}:=\Vert u\Vert_{L^1(H)}+\inf \Vert g\Vert_{L^1(H)},
\]
where the infimum is taken over all $1$-weak upper gradients $g$ of $u$ in $H$.
The substitute for the Sobolev space $W^{1,1}$ in the metric setting is the Newton-Sobolev space
\[
N^{1,1}(H):=\{u:\|u\|_{N^{1,1}(H)}<\infty\},
\]
which was first introduced in \cite{S}.
We understand a Newton-Sobolev function to be defined at every $x\in H$
(even though $\Vert \cdot\Vert_{N^{1,1}(H)}$ is then only a seminorm).
It is known that for any $u\in N_{\loc}^{1,1}(H)$ there exists a minimal $1$-weak
upper gradient of $u$ in $H$, always denoted by $g_{u}$, satisfying $g_{u}\le g$ 
a.e. in $H$ for any $1$-weak upper gradient $g\in L_{\loc}^{1}(H)$
of $u$ in $H$, see \cite[Theorem 2.25]{BB}.

We will assume throughout the paper that $X$ supports a $(1,1)$-Poincar\'e inequality,
meaning that there exist constants $C_P>0$ and $\lambda \ge 1$ such that for every
ball $B(x,r)$, every $u\in L^1_{\loc}(X)$,
and every upper gradient $g$ of $u$,
we have
\[
\vint{B(x,r)}|u-u_{B(x,r)}|\, d\mu 
\le C_P r\vint{B(x,\lambda r)}g\,d\mu,
\]
where 
\[
u_{B(x,r)}:=\vint{B(x,r)}u\,d\mu :=\frac 1{\mu(B(x,r))}\int_{B(x,r)}u\,d\mu.
\]

The $1$-capacity of a set $A\subset X$ is defined by
\[
\capa_1(A):=\inf \Vert u\Vert_{N^{1,1}(X)},
\]
where the infimum is taken over all functions $u\in N^{1,1}(X)$ such that $u\ge 1$ on $A$.
We know that $\capa_1$ is an outer capacity, meaning that
\[
\capa_1(A)=\inf\{\capa_1(W):\,W\supset A\textrm{ is open}\}
\]
for any $A\subset X$, see e.g. \cite[Theorem 5.31]{BB}.

If a property holds outside a set
$A\subset X$ with $\capa_1(A)=0$, we say that it holds $1$-quasieverywhere, or $1$-q.e.
If $H\subset X$ is $\mu$-measurable, then 
\begin{equation}\label{eq:quasieverywhere equivalence classes}
u=v\ \textrm{ 1-q.e. implies }\ \Vert u-v\Vert_{N^{1,1}(H)}=0,
\end{equation}
see \cite[Proposition 1.61]{BB}.

By \cite[Theorem 4.3, Theorem 5.1]{HaKi}, we know that for $A\subset X$,
\begin{equation}\label{eq:null sets of Hausdorff measure and capacity}
\capa_{1}(A)=0\quad
\textrm{if and only if}\quad\mathcal H(A)=0.
\end{equation}

Next we recall the definition and basic properties of functions
of bounded variation on metric spaces, following \cite{M}.
See also the monographs \cite{AFP, EvaG92, Fed, Giu84, Zie89} for the classical 
theory in the Euclidean setting.
We will always denote by $\Om\subset X$ an open set. Given a function $u\in L^1_{\loc}(\Om)$,
we define the total variation of $u$ in $\Om$ by
\[
\|Du\|(\Om):=\inf\left\{\liminf_{i\to\infty}\int_\Om g_{u_i}\,d\mu:\, u_i\in N^{1,1}_{\loc}(\Om),\, u_i\to u\textrm{ in } L^1_{\loc}(\Om)\right\},
\]
where each $g_{u_i}$ is the minimal $1$-weak upper gradient of $u_i$
in $\Om$.
(In \cite{M}, local Lipschitz constants were used in place of upper gradients, but the theory
can be developed similarly with either definition.)
We say that a function $u\in L^1(\Om)$ is of bounded variation, 
and denote $u\in\BV(\Om)$, if $\|Du\|(\Om)<\infty$.
For an arbitrary set $A\subset X$, we define
\[
\|Du\|(A):=\inf\{\|Du\|(W):\, A\subset W,\,W\subset X
\text{ is open}\}.
\]
In general, we understand the expression $\Vert Du\Vert(A)<\infty$ to mean that
there exists some open set $\Om\supset A$ such that $u$ is defined on $\Om$ with $u\in L^1_{\loc}(\Om)$
and $\Vert Du\Vert(\Om)<\infty$.
If $u\in L^1_{\loc}(\Om)$ and $\Vert Du\Vert(\Omega)<\infty$, $\|Du\|(\cdot)$ is
a Radon measure on $\Omega$ by \cite[Theorem 3.4]{M}.
A $\mu$-measurable set $E\subset X$ is said to be of finite perimeter if $\|D\ch_E\|(X)<\infty$, where $\ch_E$ is the characteristic function of $E$.
The perimeter of $E$ in $\Omega$ is also denoted by
\[
P(E,\Omega):=\|D\ch_E\|(\Omega).
\]
The $\BV$ norm is defined by
\[
\Vert u\Vert_{\BV(\Om)}:=\Vert u\Vert_{L^1(\Om)}+\Vert Du\Vert(\Om).
\]
The measure-theoretic interior of a set $E\subset X$ is defined by
\[
I_E:=
\left\{x\in X:\,\lim_{r\to 0}\frac{\mu(B(x,r)\setminus E)}{\mu(B(x,r))}=0\right\},
\]
and the measure-theoretic exterior by
\[
O_E:=
\left\{x\in X:\,\lim_{r\to 0}\frac{\mu(B(x,r)\cap E)}{\mu(B(x,r))}=0\right\}.
\]
The measure-theoretic boundary $\partial^{*}E$ is defined as the set of points
$x\in X$
at which both $E$ and its complement have strictly positive upper density, i.e.
\begin{equation}\label{eq:measure theoretic boundary}
\limsup_{r\to 0}\frac{\mu(B(x,r)\cap E)}{\mu(B(x,r))}>0\quad
\textrm{and}\quad\limsup_{r\to 0}\frac{\mu(B(x,r)\setminus E)}{\mu(B(x,r))}>0.
\end{equation}
Given an open set $\Omega\subset X$ and a $\mu$-measurable set $E\subset X$ with $P(E,\Omega)<\infty$, we know that for any Borel set $A\subset\Omega$,
\begin{equation}\label{eq:def of theta}
P(E,A)=\int_{\partial^{*}E\cap A}\theta_E\,d\mathcal H,
\end{equation}
where
$\theta_E\colon X\to [\alpha,C_d]$ with $\alpha=\alpha(C_d,C_P,\lambda)>0$, see \cite[Theorem 5.3, Theorem 5.4]{A1} 
and \cite[Theorem 4.6]{AMP}.
The following coarea formula is given in \cite[Proposition 4.2]{M}:
if $\Omega\subset X$ is an open set and $u\in \BV(\Omega)$, then
for any Borel set $A\subset\Om$,
\begin{equation}\label{eq:coarea}
\|Du\|(A)=\int_{-\infty}^{\infty}P(\{u>t\},A)\,dt.
\end{equation}

If $\Vert Du\Vert(\Om)<\infty$, from \eqref{eq:def of theta} and \eqref{eq:coarea}
we get the absolute continuity
\begin{equation}\label{eq:absolute continuity of var measure wrt H}
\Vert Du\Vert\ll \mathcal H\quad\textrm{on }\Om.
\end{equation}

If $u,v\in L^1_{\loc}(\Om)$, then
\begin{equation}\label{eq:variation of min and max}
\Vert D\min\{u,v\}\Vert(\Om)+\Vert D\max\{u,v\}\Vert(\Om)\le
\Vert Du\Vert(\Om)+\Vert Dv\Vert(\Om);
\end{equation}
for a proof see e.g. \cite[Lemma 3.1]{L-ZB}. Moreover, for any $u,v\in L^1_{\loc}(\Om)$, it is straightforward to show that
\begin{equation}\label{eq:BV functions form vector space}
\Vert D(u+v)\Vert(\Om)\le \Vert Du\Vert(\Om)+\Vert Dv\Vert(\Om).
\end{equation}

Since $\liploc(\Om)$ is dense in $N_{\loc}^{1,1}(\Om)$, see \cite[Theorem 5.47]{BB},
it follows that
\begin{equation}\label{eq:Sobolev subclass BV}
\Vert Du\Vert(\Om)\le \int_\Om g_u\,d\mu\quad
\textrm{for every }u\in N_{\loc}^{1,1}(\Om).
\end{equation}

The lower and upper approximate limits of a function $u$ on $\Om$
are defined respectively by
\[
u^{\wedge}(x):
=\sup\left\{t\in\R:\,\lim_{r\to 0}\frac{\mu(B(x,r)\cap\{u<t\})}{\mu(B(x,r))}=0\right\}
\]
and
\[
u^{\vee}(x):
=\inf\left\{t\in\R:\,\lim_{r\to 0}\frac{\mu(B(x,r)\cap\{u>t\})}{\mu(B(x,r))}=0\right\},
\]
for $x\in \Om$.
We then define the jump set as
\[
S_u:=\{u^{\wedge}<u^{\vee}\}.
\]
Note that since we understand $u^{\wedge}$ and $u^{\vee}$ to be defined only on $\Om$,
also $S_u$ is a subset of $\Om$.

Unlike Newton-Sobolev functions, we understand $\BV$ functions to be
$\mu$-equivalence classes. To consider fine properties, we need to
consider the pointwise representatives $u^{\wedge}$ and $u^{\vee}$.
The following fact clarifies the relationship between the different pointwise representatives;
it essentially follows from the Lebesgue point result for Newton-Sobolev functions given in
\cite{KKST2}.

\begin{proposition}[{\cite[Proposition 3.10]{L-ZB}}]\label{prop:Lebesgue points for Sobolev functions open set}
Let $\Om\subset X$ be an open set and let
$u\in N^{1,1}(\Om)$. Then $u=u^{\wedge}=u^{\vee}$ $\mathcal H$-a.e. in $\Om$.
\end{proposition}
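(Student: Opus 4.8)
The plan is to deduce the proposition from the Lebesgue point theorem for Newton--Sobolev functions. By \cite{KKST2}, there is a set $N\subset\Om$ with $\capa_1(N)=0$ such that every point of $\Om\setminus N$ is a Lebesgue point of $u$, that is,
\[
\lim_{r\to 0}\vint{B(x,r)}|u-u(x)|\,d\mu=0\qquad\text{for all }x\in\Om\setminus N
\]
(for $x\in\Om$ open, $B(x,r)\subset\Om$ for $r$ small, so the averages make sense). By \eqref{eq:null sets of Hausdorff measure and capacity} we then have $\mathcal H(N)=0$, so it is enough to prove that $u^{\wedge}(x)=u(x)=u^{\vee}(x)$ whenever $x\in\Om\setminus N$.

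So fix such an $x$ and put $c:=u(x)$; necessarily $c\in\R$, since otherwise the displayed limit could not vanish. The key step is a Chebyshev estimate. If $t<c$, then on $\{u<t\}$ we have $|u-u(x)|\ge c-t>0$, so
\[
\frac{\mu(B(x,r)\cap\{u<t\})}{\mu(B(x,r))}\le\frac{1}{c-t}\vint{B(x,r)}|u-u(x)|\,d\mu\to 0\quad\text{as }r\to 0,
\]
which shows that $t$ lies in the set defining $u^{\wedge}(x)$; letting $t\uparrow c$ gives $u^{\wedge}(x)\ge c$. If instead $t>c$, the same estimate applied to $\{u\ge t\}$ gives $\mu(B(x,r)\cap\{u\ge t\})/\mu(B(x,r))\to 0$, hence $\mu(B(x,r)\cap\{u<t\})/\mu(B(x,r))\to 1$, so $t$ does not lie in that set and thus $u^{\wedge}(x)\le c$. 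Consequently $u^{\wedge}(x)=c=u(x)$. Running the symmetric argument with $\{u>t\}$ in place of $\{u<t\}$ yields $u^{\vee}(x)=u(x)$, and we are done.

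The only substantial ingredient is the Lebesgue point theorem of \cite{KKST2}; once it is quoted in the form above, the rest is the elementary Chebyshev computation and presents no real obstacle. The one point that needs a little care is matching representatives: results of this type are often stated for a quasicontinuous or fine representative, so one should note that it agrees $1$-q.e.\ with the given pointwise values of $u\in N^{1,1}(\Om)$, which is harmless here both by \eqref{eq:quasieverywhere equivalence classes} and because such exceptional sets also have $\mathcal H$-measure zero by \eqref{eq:null sets of Hausdorff measure and capacity}.
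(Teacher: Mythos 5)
Your proof is correct and takes exactly the route the paper intends: the statement is quoted from \cite{L-ZB}, and as the paper remarks it essentially follows from the Lebesgue point theorem for Newton--Sobolev functions of \cite{KKST2}, which combined with \eqref{eq:null sets of Hausdorff measure and capacity} and the Chebyshev estimate is precisely your argument. The only detail you gloss over is that the result of \cite{KKST2} is stated for $u\in N^{1,1}(X)$, so to obtain the $1$-q.e.\ Lebesgue point property on the open set $\Om$ one should localize via Lipschitz cutoffs on an exhaustion $\Om_1\Subset\Om_2\Subset\cdots$ of $\Om$; this is routine and does not affect the rest of the computation, which is correct as written.
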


By \cite[Theorem 5.3]{AMP}, the variation measure of a $\BV$ function
can be decomposed into the absolutely continuous and singular part, and the latter
into the Cantor and jump part
(which are all Radon measures), as follows. Given an open set 
$\Omega\subset X$ and $u\in\BV(\Omega)$, we have for any Borel set $A\subset \Om$
\begin{equation}\label{eq:variation measure decomposition}
\begin{split}
\Vert Du\Vert(A) &=\Vert Du\Vert^a(A)+\Vert Du\Vert^s(A)\\
&=\Vert Du\Vert^a(A)+\Vert Du\Vert^c(A)+\Vert Du\Vert^j(A)\\
&=\int_{A}a\,d\mu+\Vert Du\Vert^c(A)+\int_{A\cap S_u}\int_{u^{\wedge}(x)}^{u^{\vee}(x)}\theta_{\{u>t\}}(x)\,dt\,d\mathcal H(x),
\end{split}
\end{equation}
where $a\in L^1(\Omega)$ is the density of the absolutely continuous part
and the functions $\theta_{\{u>t\}}\in [\alpha,C_d]$ 
are as in~\eqref{eq:def of theta}.
It follows that $S_u$ is $\sigma$-finite with respect to $\mathcal H$.
Moreover, $\Vert Du\Vert^c(S)=0$ for any $S\subset \Om$ that is $\sigma$-finite with respect to $\mathcal H$.
If $\Vert Du\Vert^c(\Om)=0$, we say that $u\in\SBV(\Om)$.

\begin{definition}
	We say that a set $A\subset H$ is $1$-quasiopen with respect to a set $H\subset X$
	if for every $\eps>0$ there is an
	open set $G\subset X$ such that $\capa_1(G)<\eps$ and $A\cup G$ is relatively open
	in the subspace topology of $H$.
	
	We say that a set $A\subset H$ is $1$-quasiclosed with respect to $H$
	if $H\setminus A$ is $1$-quasiopen with respect to $H$, or
	equivalently, if for every $\eps>0$ there is an open set $G\subset X$
	such that $\capa_1(G)<\eps$ and $A\setminus G$ is relatively closed
	in the subspace topology of $H$.
	
	When $H=X$, we omit mention of it.
\end{definition}

Given $H\subset X$, we say that $u$ is $1$-quasi (lower/upper semi-)continuous on $H$ if
 for every $\eps>0$ there exists an open set $G\subset X$ such that $\capa_1(G)<\eps$
 and $u|_{H\setminus G}$ is real-valued (lower/upper semi-)continuous.

It is a well-known fact that Newton-Sobolev functions are quasicontinuous,
see \cite[Theorem 1.1]{BBS} or \cite[Theorem 5.29]{BB}.
This is also true in quasiopen sets; the following is a special case of
\cite[Theorem 1.3]{BBM-QO}. Note that $1$-quasiopen sets are $\mu$-measurable
by \cite[Lemma 9.3]{BB-OD}.

\begin{theorem}\label{thm:quasicontinuity}
Let $U\subset X$ be $1$-quasiopen and let $u\in N_{\loc}^{1,1}(U)$.
Then $u$ is $1$-quasicontinuous on $U$.
\end{theorem}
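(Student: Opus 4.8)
The plan is to deduce the statement from the already-known $1$-quasicontinuity of Newton--Sobolev functions on \emph{open} sets (\cite[Theorem~5.29]{BB}), by exploiting the defining property of $1$-quasiopenness to enlarge $U$ to a genuine open set at the cost of a set of arbitrarily small $1$-capacity; alternatively one may simply invoke \cite[Theorem~1.3]{BBM-QO}.

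\textbf{Reductions.} First I would observe that $1$-quasicontinuity can be checked on a countable open cover. Since $X$ is proper, hence Lindel\"of, and $u\in N^{1,1}_{\loc}(U)$, there is a countable cover $\{B_j\}_j$ of $U$ with $u\in N^{1,1}(U\cap 2B_j)$ for every $j$; and if, for each $j$ and each $\eps>0$, one can find an open $G_j$ with $\capa_1(G_j)<\eps 2^{-j}$ such that $u|_{(U\cap 2B_j)\setminus G_j}$ is real-valued and continuous, then $G:=\bigcup_j G_j$ is open with $\capa_1(G)<\eps$ and $u|_{U\setminus G}$ is real-valued and continuous (at $x\in U\setminus G$, pick $j$ with $x\in B_j$ and use $(U\setminus G)\cap B_j\subset (U\cap 2B_j)\setminus G_j$). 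Since each $U\cap 2B_j$ is again $1$-quasiopen, it suffices to prove the theorem when $U$ is bounded and $u\in N^{1,1}(U)$. Composing with the homeomorphism $\phi:=\tfrac{2}{\pi}\arctan$ of $[-\infty,\infty]$ onto $[-1,1]$, which is $1$-Lipschitz on $\R$ so that $\phi\circ u\in N^{1,1}(U)$, and using that Newton--Sobolev functions are finite $1$-q.e.\ and that $\capa_1$ is an outer capacity, I may in addition assume $|u|\le 1$. The goal is then: for every $\eps>0$ there is an open $G$ with $\capa_1(G)<\eps$ and $u|_{U\setminus G}$ continuous.

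\textbf{Passing to an open set.} By the definition of $1$-quasiopenness I would fix an open set $G_0$ with $\capa_1(G_0)$ a small fraction of $\eps$ such that $V:=U\cup G_0$ is open; then $V\setminus U\subset G_0$. Using the outer regularity of $\capa_1$ and truncation, pick $v\in N^{1,1}(X)$ with $v\ge 0$, $v\ge 1$ on $G_0$ and $\Vert v\Vert_{N^{1,1}(X)}$ small, and set $w:=\min\{1,\,2(v-\tfrac12)_+\}\in N^{1,1}(X)$, so that $0\le w\le 1$, $w\equiv 1$ on $G_0\supset V\setminus U$, and $\{w>0\}$ is contained in an open set $W$ with $\capa_1(W)$ small. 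Define on the open set $V$ the function $\tilde u:=(1-w)\,\bar u$, where $\bar u$ denotes the extension of $u$ by zero to $V$. Granting that $\tilde u\in N^{1,1}(V)$ (see below), \cite[Theorem~5.29]{BB} applied on $V$ gives an open $G_1$ with $\capa_1(G_1)$ small and $\tilde u|_{V\setminus G_1}$ real-valued and continuous. Setting $G:=G_0\cup G_1\cup W\cup N$, where $N$ is an open set of small capacity containing the capacity-zero set on which $u$ is infinite, I get $\capa_1(G)<\eps$; on $U\setminus G$ we have $w=0$, hence $u=\bar u=\tilde u$ is real-valued there and, being the restriction of the continuous function $\tilde u|_{V\setminus G_1}$, continuous. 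Finally, $1$-quasicontinuity of $u$ follows from that of $\phi\circ u$ by composing with the continuous inverse $\phi^{-1}\colon(-1,1)\to\R$, once $\{u=\pm\infty\}$ has been absorbed into the exceptional set.

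\textbf{The main obstacle.} The crux, and the step I expect to be the main obstacle, is the claim $\tilde u\in N^{1,1}(V)$: although $\bar u$ is not a Newton--Sobolev function on $V$ (this is precisely the failure of naive extension across $\partial U$), the cutoff $w$ repairs it, and $u$ being bounded is what makes the repair quantitative. The candidate upper gradient is $\rho:=(1-w)\,g_u\ch_U+g_w\in L^1(V)$, where $g_u$ is the minimal $1$-weak upper gradient of $u$ in $U$, extended by zero. For $1$-almost every curve $\gamma$ in $V$: the set $\gamma^{-1}(G_0)$ is open and there $w\equiv 1$, hence $\tilde u\circ\gamma\equiv 0$; on the complement $\gamma$ runs in $U$, where the Newton--Sobolev regularity of $u$ and of $w$ on subcurves (a standard reduction discarding a zero-modulus family), the product rule for upper gradients, and the bound $|u|\le 1$ make $((1-w)u)\circ\gamma$ absolutely continuous with derivative bounded by $\rho\circ\gamma$; and at the endpoints of a $G_0$-excursion the continuity of $w\circ\gamma$ forces $w=1$, so $\tilde u\circ\gamma$ has no jump there. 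Integrating yields the upper gradient inequality for $\tilde u$ along $\gamma$. Making this precise — in particular the subcurve reduction and the careful bookkeeping of the exceptional curve families — is where the real work lies, and is essentially the content of \cite[Theorem~1.3]{BBM-QO}.
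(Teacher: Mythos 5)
The paper offers no proof of this statement at all: it is quoted as a special case of \cite[Theorem 1.3]{BBM-QO}, so your fallback of ``simply invoking'' that reference is exactly the paper's route, and to that extent the proposal matches the paper. Your direct sketch is a reasonable reconstruction, and the reductions and the capacitary cutoff $w$ are fine; the one step that, as written, does not quite work is the verification that $\rho$ is a $1$-weak upper gradient of $\tilde u=(1-w)\bar u$. You split a curve $\gamma$ in $V$ according to the open set $\gamma^{-1}(G_0)$ and assert that ``on the complement $\gamma$ runs in $U$'', but that complement is merely a closed subset of $[0,\ell_\gamma]$ (possibly Cantor-like), so it does not furnish subcurves in $U$ on which the upper gradient inequality for $(u,g_u)$ could be invoked. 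The fix is the fact, used elsewhere in this very paper (in the proof of Proposition \ref{prop:uniform approximation for bounded functions}, via \cite[Remark 3.5]{S2}), that a $1$-quasiopen set is $1$-path open: for $1$-a.e.\ curve $\gamma$ the set $\gamma^{-1}(U)$ is relatively open. Then $[0,\ell_\gamma]$ is covered by the two relatively open sets $\gamma^{-1}(U)$ and $\gamma^{-1}(G_0)$; on closed subintervals of the first, $\tilde u\circ\gamma$ is absolutely continuous with derivative bounded by $\rho\circ\gamma$ (Leibniz rule together with $|u|\le 1$), and on the second it vanishes identically, so a Lebesgue-number argument splits $\gamma$ into finitely many subcurves of these two types and yields the upper gradient inequality. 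With that point repaired your argument is a legitimate self-contained proof; since you flagged precisely this step as the remaining work and offered the citation as an alternative, I would count the proposal as correct.
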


$\BV$ functions have the following partially analogous quasi-semicontinuity property.

\begin{proposition}\label{prop:quasisemicontinuity}
Let $\Om\subset X$ be open, let $u\in L^1_{\loc}(\Om)$ with
$\Vert Du\Vert(\Om)<\infty$, and let $\eps>0$. Then
$u^{\wedge}$ is $1$-quasi lower semicontinuous and
$u^{\vee}$ is $1$-quasi upper semicontinuous on $\Om$.
\end{proposition}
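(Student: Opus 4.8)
The plan is to reduce the statement to the quasi-semicontinuity of superlevel sets of finite perimeter, and then invoke the coarea formula. First I would recall that for a fixed $t\in\R$, the set $\{u^{\wedge}\ge t\}$ agrees up to an $\mathcal H$-negligible (hence $\capa_1$-negligible, by \eqref{eq:null sets of Hausdorff measure and capacity}) set with the measure-theoretic interior $I_{\{u>t\}}$ together with part of $\partial^*\{u>t\}$; more precisely one has the pointwise identities $\{u^{\wedge}\ge t\}\supset I_{\{u>t\}}$ and $\{u^{\vee}\le t\}\supset O_{\{u>t\}}$, and on the complement of the jump set the three representatives coincide $\mathcal H$-a.e. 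The key input is that for a set $E$ of finite perimeter in $\Om$, the measure-theoretic interior $I_E$ (or a representative of it differing by a $\capa_1$-null set) is $1$-quasiopen in $\Om$: this is exactly the kind of statement that follows from the structure theory \eqref{eq:def of theta}--\eqref{eq:coarea} and should already be available in the literature that the paper cites ( Federer-type results for sets of finite perimeter in metric spaces).

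The main step is then the following gluing argument. Since $u^{\wedge}(x)=\sup\{t:\ x\in\{u^{\wedge}\ge t\}\}$ with the sup over rationals, lower semicontinuity of $u^{\wedge}$ up to small capacity will follow once I show that $\bigcup_{q\in\Q}\{u^{\wedge}>q\}$ can be simultaneously made open off a set of arbitrarily small $\capa_1$-measure. For each rational $q$ I use $1$-quasiopenness of the superlevel set to pick an open $G_q$ with $\capa_1(G_q)<\eps 2^{-|enumeration of q|}$ such that $\{u^{\wedge}>q\}\cup G_q$ is open; the countable subadditivity of $\capa_1$ makes $G:=\bigcup_q G_q$ have $\capa_1(G)<\eps$, and on $\Om\setminus G$ every set $\{u^{\wedge}>q\}$ is relatively open, so $u^{\wedge}|_{\Om\setminus G}$ is lower semicontinuous. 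To control the individual $\capa_1(G_q)$ one needs the superlevel sets $\{u>q\}$ to have finite perimeter for a.e.\ $q$ (true by \eqref{eq:coarea} since $\Vert Du\Vert(\Om)<\infty$ locally) and then a localization/exhaustion of $\Om$ by sets $\Om'\Subset\Om$ so that the quasiopenness is applied on bounded pieces; a diagonal argument over the exhaustion combines these. The statement for $u^{\vee}$ is obtained by applying the $u^{\wedge}$ case to $-u$, since $(-u)^{\wedge}=-u^{\vee}$.

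The hard part will be handling the countably many exceptional levels and the localization simultaneously: one must ensure that the capacity of the bad open set stays below $\eps$ even though we are taking a countable union over rationals $q$, each contributing its own small open set, and that the perimeter bound used to estimate $\capa_1(G_q)$ is uniform enough (after the exhaustion of $\Om$) to sum to something finite. I expect this to be a fairly routine but careful bookkeeping argument once the single-level quasiopenness of $I_{\{u>t\}}$ is in hand; the latter is really the crux, and if it is not directly quotable it would itself be proved by a box-counting/covering argument using \eqref{eq:def of theta} to convert the perimeter of $\{u>t\}$ into a codimension-one Hausdorff content bound on $\partial^*\{u>t\}$, and then \eqref{eq:null sets of Hausdorff measure and capacity} to upgrade small Hausdorff content to small $\capa_1$.
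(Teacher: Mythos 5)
The paper does not prove this proposition from scratch: it is quoted directly from \cite[Corollary 4.2]{L-SA}, which in turn rests on \cite[Theorem 1.1]{LaSh}. Your reduction is essentially the one underlying that citation, and the part of it you worry about most is in fact the easy part: once each set $\{u^{\wedge}>q\}$ is known to be $1$-quasiopen, choosing open sets $G_q$ with $\capa_1(G_q)<2^{-n(q)}\eps$ and taking $G=\bigcup_q G_q$ is routine, since $(\{u^{\wedge}>q\}\cup G_q)\cap(\Om\setminus G)=\{u^{\wedge}>q\}\cap(\Om\setminus G)$ is relatively open for every $q$ and lower semicontinuity follows by writing $\{u^{\wedge}>t\}$ as a union over rationals $q>t$; no uniform perimeter bound or diagonal argument over an exhaustion is needed for this step. (You do need to add that $u^{\wedge}$ and $u^{\vee}$ are finite outside a $\capa_1$-null set, since the paper's definition of quasi-semicontinuity requires the restriction to be real-valued; this follows from the decomposition \eqref{eq:variation measure decomposition} together with \eqref{eq:null sets of Hausdorff measure and capacity}.)

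The genuine gap is the single-level statement that you defer: that $I_{\{u>t\}}$ (equivalently, up to $\mathcal H$-null sets, $\{u^{\wedge}\ge t\}$) is $1$-quasiopen when $\{u>t\}$ has finite perimeter. This is not a corollary of \eqref{eq:def of theta}--\eqref{eq:coarea}, and the fallback argument you sketch does not work: \eqref{eq:def of theta} gives $\mathcal H(\partial^*\{u>t\})\simeq P(\{u>t\},\Om)$, which is \emph{finite} but not small, so $\partial^*\{u>t\}$ cannot be covered by an open set of small capacity; and \eqref{eq:null sets of Hausdorff measure and capacity} is only an equivalence of null sets, with no quantitative content-to-capacity estimate attached. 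What is actually required is to show that, after removing an open set of arbitrarily small capacity, every remaining point of $I_E$ has a whole neighbourhood meeting $O_E$ only inside the removed set; the known proof (the main theorem of \cite{LaSh}) achieves this via the boxing inequality and a Vitali-type covering argument controlling, in terms of a restricted perimeter measure, the capacity of the set of points at which the density of $E$ oscillates at small scales. So your proposal correctly isolates the crux but leaves it unproved, and the route you indicate for closing it would fail; as written, the argument is only complete if you are allowed to quote \cite[Theorem 1.1]{LaSh} (or \cite[Corollary 4.2]{L-SA}), which is precisely what the paper does.
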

\begin{proof}
This follows from \cite[Corollary 4.2]{L-SA}, which is based on
\cite[Theorem 1.1]{LaSh}.
\end{proof}

We also have the following.

\begin{theorem}[{\cite[Theorem 4.3]{L-LSC}}]\label{thm:characterization of total variational}
	Let $U\subset X$ be $1$-quasiopen. If $\Vert Du\Vert(U)<\infty$, then
	\[
	\Vert Du\Vert(U)=\inf \left\{\liminf_{i\to\infty}\int_{U}g_{u_i}\,d\mu,\,
	u_i\in N_{\loc}^{1,1}(U),\, u_i\to u\textrm{ in }L^1_{\loc}(U)\right\},
	\]
	where each $g_{u_i}$ is the minimal $1$-weak upper gradient of $u_i$ in $U$.
\end{theorem}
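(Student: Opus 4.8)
The plan is to prove the two inequalities separately. The inequality $\leq$ of the asserted identity should follow almost immediately from lower semicontinuity of the total variation under $L^1_{\loc}$-convergence: if $u_i\in N^{1,1}_{\loc}(U)$ and $u_i\to u$ in $L^1_{\loc}(U)$, then on every open $\Omega'\Subset U$ (in the metric of the ambient space, intersected with $U$) one can compare; but since $U$ need not be open, the cleaner route is to use that $\|Du\|(\cdot)$ is defined as an infimum over open supersets. Concretely, for each such $u_i$ we have by \eqref{eq:Sobolev subclass BV} (applied in $U$, interpreting the upper gradient as a $1$-weak one) that $\|Du_i\|(U)\leq\int_U g_{u_i}\,d\mu$, and then lower semicontinuity of $V\mapsto\|DV\|(U)$ along $L^1_{\loc}(U)$-convergent sequences — which holds for quasiopen $U$ and is presumably recorded in the cited \cite{L-LSC} machinery or follows from the definition via exhaustion — gives $\|Du\|(U)\leq\liminf_i\int_U g_{u_i}\,d\mu$. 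Taking the infimum over all admissible sequences yields $\|Du\|(U)\leq(\text{RHS})$.

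For the reverse inequality $\geq$, the issue is that the definition of $\|Du\|(U)$ involves approximation in open supersets of $U$, whereas here we must produce Newton--Sobolev functions living on $U$ itself. First I would reduce to the case where $\Vert Du\Vert(U)<\infty$ is realized well by an open set: by definition of $\|Du\|$ on the (merely quasiopen, hence $\mu$-measurable) set $U$, for each $\eta>0$ pick an open $W\supset U$ with $u$ extended so that $\|Du\|(W)\leq\|Du\|(U)+\eta$ — but $u$ is only given on $U$, so instead I would work intrinsically. The natural tool is the quasiopenness of $U$: for each $\eps>0$ there is an open $G$ with $\capa_1(G)<\eps$ and $U\cup G$ open. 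On the open set $U\cup G$ one can try to define a function agreeing with $u$ on $U$, apply the standard definition of the total variation there to get Newton--Sobolev approximants $v_i\to u$ on $U\cup G$ with $\liminf\int_{U\cup G}g_{v_i}\,d\mu$ close to $\|Du\|(U\cup G)$, and then restrict to $U$. The restrictions $v_i|_U$ lie in $N^{1,1}_{\loc}(U)$, converge to $u$ in $L^1_{\loc}(U)$, and $\int_U g_{v_i}\,d\mu\leq\int_{U\cup G}g_{v_i}\,d\mu$ since minimal $1$-weak upper gradients only decrease upon restriction.

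The main obstacle, and the place requiring care, is controlling the error coming from $G$: we need $\|Du\|(U\cup G)$ to be close to $\|Du\|(U)$, i.e. $\|Du\|(G\setminus U)$ small, but $u$ is not a priori defined on $G\setminus U$. The standard remedy is to first truncate: it suffices to treat bounded $u$ (since truncations $u_k:=\max\{-k,\min\{k,u\}\}$ satisfy $\|Du_k\|(U)\to\|Du\|(U)$ and $u_k\to u$, and $N^{1,1}_{\loc}$ approximants of each $u_k$ can be diagonalized), and then for bounded $u$ one can define an extension of $u$ to $U\cup G$ — e.g. by a McShane/Lipschitz-type extension on the "bad" part, or by using that $\capa_1(G)<\eps$ forces $\mathcal H(G\setminus U)$ and hence $\|Du\|$-mass on any prospective jump to be small via \eqref{eq:null sets of Hausdorff measure and capacity} and \eqref{eq:absolute continuity of var measure wrt H} — whose total variation over $U\cup G$ exceeds that over $U$ by at most $C\eps$ (or by a quantity tending to $0$ with $\eps$). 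I expect the cited \cite[Theorem 4.3]{L-LSC} to carry out precisely this extension-and-restriction argument; the technical heart is the capacitary estimate showing the extension across $G$ contributes negligible variation. Letting $\eps\to0$ then $\eta\to0$ and diagonalizing over the truncation parameter $k$ completes the proof.
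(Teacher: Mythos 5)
First, a remark on the comparison itself: the paper does not prove this statement — it is imported verbatim from \cite[Theorem 4.3]{L-LSC} — so there is no internal proof to measure your argument against; I can only assess it on its own terms.

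You have the two inequalities the wrong way round, and the one that carries all the difficulty is the one you dispose of in a sentence. The inequality ``$\ge$'' (that the infimum on the right-hand side is at most $\Vert Du\Vert(U)$) is the easy direction: by the paper's convention, $\Vert Du\Vert(U)<\infty$ presupposes an open $\Om\supset U$ with $u\in L^1_{\loc}(\Om)$ and $\Vert Du\Vert(\Om)<\infty$, so $\Vert Du\Vert$ is a Radon measure on $\Om$; by outer regularity pick an open $W$ with $U\subset W\subset\Om$ and $\Vert Du\Vert(W)\le\Vert Du\Vert(U)+\eps$, apply the definition of the total variation on the \emph{open} set $W$ to get $u_i\in N^{1,1}_{\loc}(W)$ with $u_i\to u$ in $L^1_{\loc}(W)$ and $\liminf_i\int_W g_{u_i}\,d\mu\le\Vert Du\Vert(W)+\eps$, and restrict to $U$, noting that minimal $1$-weak upper gradients only decrease under restriction. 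No extension across a small-capacity set $G$, no truncation, and no capacitary estimate are needed here; your worry that ``$u$ is not a priori defined on $G\setminus U$'' is moot because of the stated convention.

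The genuinely hard inequality is ``$\le$'': that for \emph{every} sequence $u_i\in N^{1,1}_{\loc}(U)$ with $u_i\to u$ merely in $L^1_{\loc}(U)$ one has $\Vert Du\Vert(U)\le\liminf_i\int_U g_{u_i}\,d\mu$. This is a lower semicontinuity statement intrinsic to the quasiopen set $U$, and it is precisely the main content of \cite{L-LSC}. Your treatment of it is circular. The step ``$\Vert Du_i\Vert(U)\le\int_U g_{u_i}\,d\mu$ by \eqref{eq:Sobolev subclass BV} applied in $U$'' is not available: \eqref{eq:Sobolev subclass BV} is stated for open sets, and $\Vert Du_i\Vert(U)$ is not even defined unless $u_i$ extends to an open neighbourhood of $U$ with finite variation, which a function given only in $N^{1,1}_{\loc}(U)$ need not do. You then invoke ``lower semicontinuity of $V\mapsto\Vert DV\Vert(U)$ along $L^1_{\loc}(U)$-convergent sequences, which holds for quasiopen $U$ and is presumably recorded in the cited machinery or follows from the definition via exhaustion.'' It does not follow from the definition: $\Vert Dv\Vert(U)$ is defined through open supersets of $U$, while $L^1_{\loc}(U)$-convergence gives no control off $U$, and a quasiopen set admits no exhaustion by open sets. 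Assuming this lower semicontinuity is assuming the theorem. The actual proof has to work for this direction, using that $1$-quasiopen sets are $1$-path open and coincide with $1$-finely open sets up to $\mathcal H$-negligible sets, together with the quasi-semicontinuity of the representatives $u^{\wedge}$ and $u^{\vee}$; none of that machinery appears in your outline, so the key idea is missing.
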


For any $D\subset H\subset X$, with $H$ $\mu$-measurable, the space of Newton-Sobolev functions with zero boundary values is defined as
\[
N_0^{1,1}(D,H):=\{u|_{D}:\,u\in N^{1,1}(H)\textrm{ and }u=0\textrm { on }H\setminus D\}.
\]
The space is a subspace of $N^{1,1}(D)$ when $D$ is $\mu$-measurable, and it can always
be understood to be a subspace of $N^{1,1}(H)$.
If $H=X$, we omit it from the notation.

Similarly, for $D\subset\Om\subset X$, with $\Om$ open, we define the class of $\BV$ functions with zero boundary values as
\[
\BV_0(D,\Om):=
\left\{u|_{D}:\,u\in\BV(\Om),\ u^{\wedge}(x)=u^{\vee}(x)=0\textrm{ for }\mathcal H\textrm{-a.e. }x\in \Om\setminus D\right\}.
\]
This class was previously considered in \cite{L-ZB}.
Functions in $\BV_0(D,\Om)$ can also be understood to be defined on the whole of $\Om$,
and we will do so without further notice.
Moreover, if $\Om=X$, we omit it from the notation.
By \eqref{eq:Sobolev subclass BV}, Proposition
\ref{prop:Lebesgue points for Sobolev functions open set},
and \eqref{eq:null sets of Hausdorff measure and capacity} we see that
\begin{equation}\label{eq:Newtonian zero class contained in BV zero}
N_0^{1,1}(D,\Om)\subset \BV_0(D,\Om).
\end{equation}

Next we define the fine topology in the case $p=1$.
\begin{definition}\label{def:1 fine topology}
We say that $A\subset X$ is $1$-thin at the point $x\in X$ if
\[
\lim_{r\to 0}r\frac{\rcapa_1(A\cap B(x,r),B(x,2r))}{\mu(B(x,r))}=0.
\]
We also say that a set $U\subset X$ is $1$-finely open if $X\setminus U$ is $1$-thin at every $x\in U$. Then we define the $1$-fine topology as the collection of $1$-finely open sets on $X$.

We denote the $1$-fine interior of a set $H\subset X$, i.e. the largest $1$-finely open set contained in $H$, by $\fint H$. We denote the $1$-fine closure of $H$,
i.e. the smallest $1$-finely closed set containing $H$, by $\overline{H}^1$. The $1$-fine boundary of $H$
is $\partial^1 H:=\overline{H}^1\setminus \fint H$.
\end{definition}

See \cite[Section 4]{L-FC} for discussion on this definition, and for a proof of the fact that the
$1$-fine topology is indeed a topology.
By \cite[Lemma 3.1]{L-Fed}, $1$-thinness implies zero measure density, i.e.
\begin{equation}\label{eq:thinness and measure thinness}
\textrm{If }A\textrm{ is 1-thin at }x,\textrm{ then }x\in O_A.
\end{equation}

\begin{theorem}[{\cite[Corollary 6.12]{L-CK}}]\label{thm:finely open is quasiopen and vice versa}
A set $U\subset X$ is $1$-quasiopen if and only if it is the union of a $1$-finely
open set and a $\mathcal H$-negligible set.
\end{theorem}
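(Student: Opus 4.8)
The plan is to prove the two implications separately. Throughout I would use \eqref{eq:null sets of Hausdorff measure and capacity} to pass freely between ``$\mathcal H$-negligible'' and ``$\capa_1$-null'', the elementary fact that every open set is $1$-finely open (at an interior point $x$ of an open set $O$ the trace $(X\setminus O)\cap B(x,r)$ is empty for small $r$, so $X\setminus O$ is $1$-thin at $x$), and the complementary fact that every $Z$ with $\capa_1(Z)=0$ is $1$-finely closed (for any $x$ and small $r$ the set $Z\cap\overline{B(x,r)}$ is compact of zero $\capa_1$ and is compactly contained in the open set $B(x,2r)$, so localizing a cheap Newton--Sobolev test function gives $\rcapa_1(Z\cap\overline{B(x,r)},B(x,2r))=0$ and hence $Z$ is $1$-thin at every point; here the properness of $X$ is used). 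Finally, since $\capa_1$ is an outer capacity, a $\capa_1$-null set can be enclosed in an open set of arbitrarily small capacity, so by subadditivity of $\capa_1$ the union of a $1$-quasiopen set with a $\capa_1$-null set is again $1$-quasiopen; hence the equivalence reduces to showing that a $1$-finely open set is $1$-quasiopen, and conversely that a $1$-quasiopen set agrees with its $1$-fine interior up to a $\capa_1$-null set.

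\textbf{Quasiopen implies $1$-finely open plus negligible.} Let $U$ be $1$-quasiopen, set $V:=\fint U$ and $N:=U\setminus\fint U$, and for each $j\in\N$ pick an open $G_j$ with $\capa_1(G_j)<2^{-j}$ and $U\cup G_j$ open. Then $U\cup G_j$ is $1$-finely open, and since removing $\overline{G_j}^1$ deletes all of $G_j$,
\[
(U\cup G_j)\setminus\overline{G_j}^1=U\setminus\overline{G_j}^1
\]
is a $1$-finely open set (a $1$-finely open set minus a $1$-finely closed set) contained in $U$; hence $U\setminus\overline{G_j}^1\subset\fint U$, i.e. $N\subset\overline{G_j}^1$ for every $j$. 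It remains to prove $\capa_1(N)=0$. Here I would invoke the Choquet and Kellogg properties of $\rcapa_1$: since $N$ lies in the $1$-fine closure of each $G_j$ while $\sum_j\capa_1(G_j)<\infty$, these properties should force $N$ to be $\capa_1$-null. (An alternative route is to represent $U$, up to a $\capa_1$-null set, as $\{u>0\}$ for a $1$-quasicontinuous $u$ and to use that such $u$ is $1$-finely continuous $1$-q.e., so that $\{u>0\}$ agrees with its $1$-fine interior off a $\capa_1$-null set.)

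\textbf{$1$-Finely open plus negligible implies quasiopen.} By the reduction above it suffices to show that a $1$-finely open set $V$ is $1$-quasiopen. The scheme is the one familiar from the case $p>1$: at each $x\in V$ the set $X\setminus V$ is $1$-thin at $x$, and from this one should construct (a ``weak Cartan property'') a function $u_x\in N^{1,1}(X)$ with $0\le u_x\le1$, $u_x(x)=0$, $u_x\equiv1$ on $(X\setminus V)\cap B(x,\rho_x)$ for some $\rho_x>0$, and with arbitrarily small $N^{1,1}(X)$-norm. Then $\hat V_x:=\{u_x<1\}\cap B(x,\rho_x)$ contains $x$, is contained in $V$, and is $1$-quasiopen: indeed $u_x$ is $1$-quasicontinuous by Theorem \ref{thm:quasicontinuity} applied with $U=X$, so $\{u_x<1\}$ is $1$-quasiopen, and an intersection of a $1$-quasiopen set with an open set is again $1$-quasiopen. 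Now $V=\bigcup_{x\in V}\hat V_x$; using the quasi-Lindel\"of property of the $1$-fine topology this union reduces to a countable subunion up to a $\capa_1$-null set, and since a countable union of $1$-quasiopen sets is $1$-quasiopen and adding a $\capa_1$-null set preserves $1$-quasiopenness, $V$ is $1$-quasiopen.

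\textbf{The main obstacle.} The genuinely hard ingredient is the $p=1$ ``weak Cartan''/thinness-potential construction in the second implication. For $p>1$ one has an integral Wiener criterion and builds such potentials by summing rescaled capacitary functions over dyadic annuli around $x$; when $p=1$ there is no such criterion, $\rcapa_1$ interacts badly with the usual truncation and summation operations, and the construction has to be routed through sets of finite perimeter and their measure-theoretic interiors, using the comparability $\rcapa_{\BV}\simeq\rcapa_1$, the density implication \eqref{eq:thinness and measure thinness}, and the structure theory of $\BV$ functions; arranging that the resulting sets are genuinely $1$-quasiopen (not merely $1$-finely open up to a null set) and handling the quasi-Lindel\"of bookkeeping are the delicate points. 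The Choquet/Kellogg input used in the first implication is a further nontrivial ingredient, but of a more standard flavour.
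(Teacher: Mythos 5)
This statement is not proved in the paper at all: it is imported verbatim as \cite[Corollary 6.12]{L-CK}, so there is no internal argument to compare yours with, and the only fair comparison is with the strategy of that reference (together with \cite{L-NC}), which your sketch does in fact mirror. Your first implication is essentially sound and can be closed with tools already quoted in this paper, without appealing vaguely to ``Choquet and Kellogg properties'': the set identity $(U\cup G_j)\setminus\overline{G_j}^1=U\setminus\overline{G_j}^1$ and the inclusion $N:=U\setminus\fint U\subset\overline{G_j}^1$ are correct, and to see that $\capa_1(\overline{G_j}^1)$ is small one takes $\eta_j\in N^{1,1}(X)$ with $\eta_j=1$ on $G_j$ and $\Vert\eta_j\Vert_{N^{1,1}(X)}\lesssim 2^{-j}$ from Lemma \ref{lem:capacity and Newtonian function and bigger set} and uses that Newton--Sobolev functions are $1$-finely continuous $1$-q.e., so that $\eta_j=1$ $1$-q.e.\ on $\overline{G_j}^1$; this is exactly the step used in the proof of Theorem \ref{thm:1capacity and BVcapacity are equal}. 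Your preliminary observations (open sets are $1$-finely open, $\capa_1$-null sets are $1$-finely closed, stability of quasiopenness under adding null sets, which is also Lemma \ref{lem:stability of quasiopen sets}) are all correct.

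The genuine gap is the one you yourself flag: in the converse implication everything reduces to ``$1$-finely open implies $1$-quasiopen'', and the two inputs you invoke --- the existence, for each $x\in V$, of a potential $u_x\in N^{1,1}(X)$ of small norm with $u_x(x)<1$ and $u_x=1$ on $(X\setminus V)\cap B(x,\rho_x)$ (the weak Cartan property for $p=1$), and the quasi-Lindel\"of/strict quasicovering property needed to pass from the uncountable union $\bigcup_x\hat V_x$ to a countable one --- are precisely the content of \cite{L-NC} and \cite{L-CK}. They cannot be obtained by the $p>1$ Wiener-series construction, and deriving them via sets of finite perimeter, the boxing inequality and the comparability of $\rcapa_{\BV}$ and $\rcapa_1$ occupies most of those two papers. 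So your write-up is an accurate architectural outline of the known proof, with the first direction reparable in a few lines, but the second direction rests on unproved black boxes that constitute essentially the whole difficulty of the theorem.
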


\begin{lemma}[{\cite[Lemma 4.9]{L-LSC}}]\label{lem:stability of quasiopen sets}
	Let $U\subset X$ be $1$-quasiopen and let $A\subset X$ be $\mathcal H$-negligible.
	Then $U\setminus A$ and $U\cup A$ are $1$-quasiopen sets.
\end{lemma}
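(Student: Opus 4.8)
The plan is to reduce the statement to two standard facts about the $1$-capacity. First, since $A$ is $\mathcal H$-negligible, \eqref{eq:null sets of Hausdorff measure and capacity} gives $\capa_1(A)=0$; combined with the fact that $\capa_1$ is an outer capacity, this means that for every $\eps>0$ there is an \emph{open} set $W\supset A$ with $\capa_1(W)<\eps/2$. Second, $\capa_1$ is (countably) subadditive, see e.g. \cite{BB}. I also note that, since here $H=X$, "relatively open in the subspace topology of $X$" simply means "open", so the defining condition of $1$-quasiopenness is that for every $\eps>0$ there is an open $G$ with $\capa_1(G)<\eps$ and the set $\cup\,G$ open.

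Now I fix $\eps>0$. Using that $U$ is $1$-quasiopen, I pick an open set $G_1\subset X$ with $\capa_1(G_1)<\eps/2$ such that $U\cup G_1$ is open, take $W$ as above, and set $G:=G_1\cup W$, so that $G$ is open and $\capa_1(G)\le\capa_1(G_1)+\capa_1(W)<\eps$. The key point is the elementary identity
\[
(U\cup A)\cup G=(U\setminus A)\cup G=(U\cup G_1)\cup W,
\]
which holds because $A\subset W$ forces $U\cup A\cup W=U\cup W$, and because $U\cap A\subset A\subset W$ forces $(U\setminus A)\cup W=U\cup W$. The right-hand side is a union of two open sets, hence open; since $\eps>0$ was arbitrary, both $U\cup A$ and $U\setminus A$ are $1$-quasiopen.

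I do not expect any genuine obstacle here: once the capacity input is in place (the equivalence $\mathcal H(A)=0\Leftrightarrow\capa_1(A)=0$, outer regularity, and subadditivity), the argument is purely set-theoretic bookkeeping. The only things to double-check are the splitting of $\eps$ into halves and, if one prefers to start from the "$A\setminus G$ relatively closed" characterization of $1$-quasiclosedness rather than the "$A\cup G$ relatively open" one used above, that the same computation carries over after taking complements in $X$.
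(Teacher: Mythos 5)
Your proof is correct. Note that the paper does not prove this lemma at all: it is imported verbatim as \cite[Lemma 4.9]{L-LSC}, so there is no internal argument to compare against. Your blind proof is a clean, self-contained substitute: the ingredients you invoke are all available in the paper's preliminaries, namely the equivalence $\mathcal H(A)=0\Leftrightarrow\capa_1(A)=0$ from \eqref{eq:null sets of Hausdorff measure and capacity}, the outer-capacity property of $\capa_1$ (which gives the open $W\supset A$ with $\capa_1(W)<\eps/2$), and the standard subadditivity of $\capa_1$ from \cite{BB}. The set-theoretic identity $(U\cup A)\cup G=(U\setminus A)\cup G=(U\cup G_1)\cup W$ is verified correctly (it only uses $A\subset W$), and with $H=X$ the definition of $1$-quasiopenness is exactly the condition you check, so both conclusions follow at once from a single choice of $G$. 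The only mild remark is that one could bypass outer regularity by observing $\capa_1(A)=0$ and applying Lemma \ref{lem:capacity and Newtonian function and bigger set} to get the enveloping open set, but your route through the outer-capacity property stated in Section \ref{sec:preliminaries} is equally legitimate and arguably simpler.
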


\section{$\BV$ functions with zero boundary values}\label{sec:BV0 class}

In this section we consider some questions related to the
class $\BV_0(D,\Om)$, which
will be needed in later sections.
We will always denote by $\Om$ a nonempty open set.

The support of a function $u$ defined on a subset of
$\Om$ (usually the entire $\Om$, except in Lemma
\ref{lem:extension of compactly supported function} below)
is the relatively closed
(in the subspace topology of $\Om$) set
\[
\supp_{\Om} u:=\{x\in \Om:\,\mu(B(x,r)\cap\{u\neq 0\})>0\ \textrm{for all }r>0\}.
\]

\begin{theorem}[{\cite[Theorem 3.16]{L-ZB}}]\label{thm:characterization of BV function with zero bdry values}
	Let $D \subset \Om\subset X$,
	and let $u\in\BV(\Om)$.
	Then the following are equivalent:
	\begin{enumerate}[{(1)}]
		\item $u\in \BV_0(D,\Om)$.
		\item There exists a sequence $(u_k)\subset\BV(\Om)$ such that each $\supp_{\Om} u_k$ is
		a bounded subset of $D$, and
		$u_k\to u$ in $\BV(\Om)$.
	\end{enumerate}
\end{theorem}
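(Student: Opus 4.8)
The plan is to prove the two implications separately; $(2)\Rightarrow(1)$ is the softer one, so I would do it first.

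\emph{$(2)\Rightarrow(1)$.} Each $u_k$ vanishes on the relatively open set $\Om\setminus\supp_{\Om}u_k\supset\Om\setminus D$, hence $u_k^{\wedge}=u_k^{\vee}=0$ on $\Om\setminus D$. Writing $h_k:=|u-u_k|$ we have $h_k\ge0$, $h_k\to0$ in $\BV(\Om)$, and the $\mathcal H$-a.e.\ valid bounds $|u^{\vee}-u_k^{\vee}|\le h_k^{\vee}$, $|u^{\wedge}-u_k^{\wedge}|\le h_k^{\vee}$; on $\Om\setminus D$ this gives $|u^{\vee}|,\,|u^{\wedge}|\le h_k^{\vee}$ $\mathcal H$-a.e., so it is enough to find a subsequence with $h_k^{\vee}\to0$ $\mathcal H$-a.e. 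For this I would prove the weak-type estimate
\[
\capa_1(\{v^{\vee}\ge\lambda\})\le\frac{C}{\lambda}\,\Vert v\Vert_{\BV(X)},\qquad v\in\BV(X),\ \lambda>0,
\]
from the inclusion $\{v^{\vee}\ge\lambda\}\subset X\setminus O_{\{v>s\}}$ (any $0<s<\lambda$), a boxing-type inequality $\capa_1(X\setminus O_E)\le C(\mu(E)+P(E,X))$ for sets of finite measure and perimeter, and a coarea choice of $s\in(\lambda/2,\lambda)$ with $P(\{v>s\},X)\le2\lambda^{-1}\Vert Dv\Vert(X)$ via~\eqref{eq:coarea}. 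After localizing to $\Om$ (multiply $h_k$ by Lipschitz cutoffs compactly supported in $\Om$), a Borel--Cantelli argument using subadditivity of $\capa_1$ and~\eqref{eq:null sets of Hausdorff measure and capacity}, run along a precompact exhaustion of $\Om$, produces the subsequence. Thus $u^{\wedge}=u^{\vee}=0$ $\mathcal H$-a.e.\ on $\Om\setminus D$, i.e.\ $u\in\BV_0(D,\Om)$.

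\emph{$(1)\Rightarrow(2)$.} First I would reduce, by diagonal sequences, to a convenient situation: splitting $u=u^+-u^-$ (using~\eqref{eq:variation of min and max} and that $(u^{\pm})^{\wedge},(u^{\pm})^{\vee}$ are built from $u^{\wedge},u^{\vee}$ and vanish where $u^{\wedge}=u^{\vee}=0$) reduces to $u\ge0$; passing to $\min\{u,L\}$ with $L\to\infty$ (error $\int_L^\infty P(\{u>t\},\Om)\,dt\to0$ by~\eqref{eq:coarea}) reduces to $u$ bounded; multiplying by Lipschitz cutoffs equal to $1$ on $B(x_0,R)$ and $0$ off $B(x_0,2R)$ reduces to $u$ of bounded support in $\Om$; membership in $\BV_0(D,\Om)$ survives each step. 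I would also record two facts about $u\in\BV_0(D,\Om)$: $u=0$ $\mu$-a.e.\ on $\Om\setminus D$ (as $\mathcal H$-null sets are $\mu$-null), and $\Vert Du\Vert(\Om\setminus D)=0$ --- because by~\eqref{eq:coarea} and~\eqref{eq:def of theta} the measure $\Vert Du\Vert$ charges no set $\{u^{\wedge}=u^{\vee}=c_0\}$ ($\partial^{*}\{u>t\}$ misses it for $t\ne c_0$), and $\Om\setminus D\subset\{u^{\wedge}=u^{\vee}=0\}$ up to an $\mathcal H$-null, hence $\Vert Du\Vert$-null, set. Now with $u\ge0$ bounded and of bounded support, put $w_m:=(u-1/m)_+$; by~\eqref{eq:coarea} $\Vert D(w_m-u)\Vert(\Om)=\Vert D\min\{u,1/m\}\Vert(\Om)\to0$ and $\Vert w_m-u\Vert_{L^1(\Om)}\to0$, so $w_m\to u$ in $\BV(\Om)$, while $\supp_{\Om}w_m=\overline{\{u^{\vee}>1/m\}}^{\,\Om}$ is bounded. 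It remains to replace each $w_m$ by some $v_m$ with $\supp_{\Om}v_m$ a bounded subset of $D$ and $\Vert v_m-w_m\Vert_{\BV(\Om)}$ small.

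The construction of $v_m$ is the crux and the place where the fine-topology tools are essential. The obstruction is that the hypothesis only controls $\{u^{\vee}>1/m\}\cap(\Om\setminus D)$, an object defined through the precise representative, which is $\mathcal H$-null, whereas taking the topological closure to form $\supp_{\Om}w_m$ can enlarge this set so much that $A_m:=\supp_{\Om}w_m\cap(\Om\setminus D)$ is neither $\mathcal H$-null nor of small $\capa_1$. The $1$-quasi upper semicontinuity of $u^{\vee}$ (Proposition~\ref{prop:quasisemicontinuity}) lets one bridge part of this gap: off an open set $G$ with $\capa_1(G)$ arbitrarily small, $u^{\vee}$ is genuinely upper semicontinuous, forcing $A_m\subset\big(\{u^{\vee}\ge1/m\}\cap(\Om\setminus D)\big)\cup\big(\overline G\cap(\Om\setminus D)\big)$, the first piece being $\mathcal H$-null. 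Accordingly $v_m$ would be built as a product of two cutoffs of $w_m$: a Newtonian cutoff $1-\phi_m$ with $\phi_m=1$ on an open neighbourhood of the $\capa_1$-null piece and $\Vert\phi_m\Vert_{N^{1,1}(\Om)}$ small (then $\int\phi_m\,d\Vert Dw_m\Vert\to0$, since $\phi_m\to0$ $\mathcal H$-a.e., hence $\Vert Dw_m\Vert$-a.e.\ by~\eqref{eq:absolute continuity of var measure wrt H}), and a sharp cutoff $\ch_{\Om\setminus W_m}$ where $W_m$ is an open neighbourhood of $\overline G\cap(\Om\setminus D)\subset\Om\setminus D$ on which both $\Vert Dw_m\Vert$ and $\int w_m\,d\mu$ are small (available by outer regularity since $\Vert Dw_m\Vert(\Om\setminus D)=0$ and $w_m=0$ $\mu$-a.e.\ there), chosen moreover --- using Theorem~\ref{thm:finely open is quasiopen and vice versa} to handle the $1$-finely open / $\mathcal H$-negligible decomposition of $\{u^{\vee}<1/m\}$ --- so that $W_m$ engulfs the $\mathcal H$-null part of $\{u^{\vee}\ge1/m\}$ near $\Om\setminus D$ and hence $w_m^{\vee}$ is small along $\partial W_m$, making the new jump of $w_m\ch_{\Om\setminus W_m}$ negligible. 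Then $v_m:=(1-\phi_m)\,w_m\,\ch_{\Om\setminus W_m}$ has $\supp_{\Om}v_m\subset D$, and the Leibniz rule together with~\eqref{eq:coarea} gives $\Vert v_m-w_m\Vert_{\BV(\Om)}\to0$. Making all of these estimates simultaneously compatible is the main obstacle; by contrast the reductions, the coarea computations and the Leibniz estimate are routine.
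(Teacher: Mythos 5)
First, a point of reference: the paper does not prove this theorem itself --- it is imported verbatim from \cite[Theorem 3.16]{L-ZB} --- so your proposal can only be measured against the argument in that reference, which runs along the same general lines as your $(1)\Rightarrow(2)$ (truncation/cutoff reductions, quasi upper semicontinuity of $u^{\vee}$, a Newton--Sobolev cutoff supported on a set of small capacity) but with one crucial structural difference noted below. Your $(2)\Rightarrow(1)$ is sound: the weak-type estimate $\capa_1(\{v^{\vee}\ge\lambda\})\le C\lambda^{-1}\Vert v\Vert_{\BV(X)}$ is available (it is essentially the capacitary boxing inequality of Hakkarainen--Kinnunen/KKST, and your derivation via $\{v^{\vee}\ge\lambda\}\subset I_{\{v>s\}}\cup\partial^*\{v>s\}$ and a good coarea level is the standard one), and the Borel--Cantelli/localization step is routine. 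The reductions in $(1)\Rightarrow(2)$ and the two recorded facts ($u=0$ $\mu$-a.e.\ and $\Vert Du\Vert=0$ on $\Om\setminus D$) are also fine.

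The genuine gap is in the construction of $v_m$, specifically the sharp cutoff $\ch_{\Om\setminus W_m}$. You choose $W_m\supset\overline G\cap(\Om\setminus D)$ only by outer regularity of the measure $\Vert Dw_m\Vert+w_m\,d\mu$; this controls $\Vert Dw_m\Vert(W_m)$ and $\int_{W_m}w_m\,d\mu$ but gives no control whatsoever on $P(W_m,\Om)$ or on the variation created along $\partial W_m$. In general $w_m\ch_{\Om\setminus W_m}$ need not even belong to $\BV(\Om)$ (take $W_m$ an open set of infinite perimeter inside a region where $w_m\equiv 1$). Your proposed rescue --- that ``$w_m^{\vee}$ is small along $\partial W_m$'' because $W_m$ engulfs the $\mathcal H$-null part of $\{u^{\vee}\ge 1/m\}$ --- does not follow: $\partial W_m$ is forced near $\overline G\cap(\Om\setminus D)$, which may sit inside $\{u^{\vee}\ge 1/m\}\cap D$ where $w_m^{\vee}$ is of order $\Vert u\Vert_\infty$, and the invocation of Theorem \ref{thm:finely open is quasiopen and vice versa} does not supply the missing boundary control. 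The difficulty is self-inflicted: the set $\overline G$ should never appear. If you first multiply by $1-\eta$, where $\eta$ is the Newton--Sobolev function of Lemma \ref{lem:capacity and Newtonian function and bigger set} with $\eta=1$ on an open set of small capacity containing both the exceptional set $G$ of quasi upper semicontinuity \emph{and} an open neighbourhood of the $\capa_1$-null set $\{u^{\vee}\ge 1/m\}\cap(\Om\setminus D)$, then $(1-\eta)w_m$ vanishes identically on that open set, so its support avoids it outright (no closure is taken), and the upper semicontinuity argument then places $\supp_\Om((1-\eta)w_m)$ inside $\{u^{\vee}\ge 1/m\}$ minus that open set, hence inside $D$. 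The $\BV$-error is then controlled by the Leibniz rule, $\Vert\eta\Vert_{N^{1,1}}$ small, and Lemma \ref{lem:variation measure and capacity}. With that single Newtonian cutoff in place of your two-cutoff scheme, the argument closes; as written, it does not.
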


The following lemma, though slightly technical, simply shows that we can apply the
definition of the total variation to find approximating locally Lipschitz functions that converge suitably in the $L^1$-norm.

\begin{lemma}\label{lem:choosing approximating function}
	Let $\Om_1\Subset \Om_2\Subset \ldots \Subset\bigcup_{j=1}^{\infty}\Om_j=\Om$ be open sets, let $\Om_0:=\emptyset$, and
	let $\eta_j\in \Lip_c(\Om_{j+1})$ such that $0\le \eta_j\le 1$ on $X$ and
	$\eta_j=1$ on $\Om_{j}$ for each $j\in\N$, and $\eta_0\equiv 0$.
	Moreover, let $u\in L^1_{\loc}(\Om)$ with
	$\Vert Du\Vert(\Om)<\infty$, and 
	let $(u_i)\subset \liploc(\Om)$ such that $u_i\to u$ in $L^1_{\loc}(\Om)$
	and
	\[
	\lim_{i\to\infty}\int_\Om g_{u_i}\,d\mu=\Vert Du\Vert(\Om),
	\]
	where each $g_{u_i}$ is the minimal $1$-weak upper gradient of $u_i$ in $\Om$.
	Finally, let $\delta_j>0$ for each $j\in\N$, and let $\eps>0$. Then,
	passing to a suitable subsequence of $(u_i)$ (not relabeled,
	and with the understanding that terms can be repeated) and defining
	\begin{equation}\label{eq:definition by means of etas}
	v:=\sum_{i=1}^{\infty}(\eta_i-\eta_{i-1})u_{i},
	\end{equation}
	we have $\Vert v-u\Vert_{L^1(\Om_{j}\setminus \Om_{j-1})}<\delta_j$ for all $j\in\N$
	and
	$\int_\Om g_v\,d\mu<\Vert Du\Vert(\Om)+\eps$.
\end{lemma}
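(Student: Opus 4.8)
The plan is to build $v$ as a locally finite "patching" of the $u_i$'s via the partition of unity $\{\eta_i-\eta_{i-1}\}$, controlling two things simultaneously: the $L^1$-distance to $u$ on each annulus $\Om_j\setminus\Om_{j-1}$, and the integral of the minimal upper gradient of $v$. The key point is that on any fixed $\Om_j$, only finitely many terms of the sum \eqref{eq:definition by means of etas} are nonzero (since $\eta_i-\eta_{i-1}=0$ on $\Om_{i-1}\supset\Om_j$ once $i>j$), so $v\in\liploc(\Om)$ and the Leibniz rule for upper gradients applies term by term. First I would pass to a subsequence of $(u_i)$ (not relabeled) so rapidly that $\Vert u_i-u\Vert_{L^1(\Om_{i+1})}$ is smaller than any prescribed tolerance; this is possible because $u_i\to u$ in $L^1_{\loc}(\Om)$ and each $\Om_{i+1}\Subset\Om$. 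Simultaneously, along this subsequence we still have $\int_\Om g_{u_i}\,d\mu\to\Vert Du\Vert(\Om)$, so we may also assume $\int_\Om g_{u_i}\,d\mu<\Vert Du\Vert(\Om)+\eps'$ for all $i$, where $\eps'$ is a small parameter to be fixed.

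For the $L^1$-estimate: on the annulus $\Om_j\setminus\Om_{j-1}$ we have $v=\sum_{i}(\eta_i-\eta_{i-1})u_i$ where the sum ranges over $i\in\{j-1,j,\ldots\}$ but actually only over finitely many $i$; since $\sum_i(\eta_i-\eta_{i-1})\equiv 1$ there, $v-u=\sum_i(\eta_i-\eta_{i-1})(u_i-u)$, and using $0\le\eta_i-\eta_{i-1}\le 1$ together with the supports being contained in $\Om_{j}\setminus\Om_{j-2}$ or so, the triangle inequality bounds $\Vert v-u\Vert_{L^1(\Om_j\setminus\Om_{j-1})}$ by a finite sum of terms $\Vert u_i-u\Vert_{L^1(\Om_{i+1})}$ with $i$ near $j$; choosing the subsequence fast enough (the $i$-th tolerance depending on $\delta_i$) makes this $<\delta_j$.

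For the upper gradient estimate, which I expect to be the main obstacle: by the Leibniz rule, a $1$-weak upper gradient of $v$ in $\Om$ is
\[
g_v\le\sum_{i=1}^{\infty}(\eta_i-\eta_{i-1})g_{u_i}+\sum_{i=1}^{\infty}|u_i-u_{?}|\,g_{\eta_i-\eta_{i-1}},
\]
but the second (error) sum must be handled carefully. The trick is to write $v-u_k=\sum_i(\eta_i-\eta_{i-1})(u_i-u_k)$ on a set where $\eta_k\equiv 1$, so that on $\Om_k$ the telescoping structure of $\eta_i-\eta_{i-1}$ lets one express the Leibniz error in terms of differences $u_i-u_j$ of consecutive (in the subsequence) functions; each such difference is small in $L^1$ on the relevant compact annulus, while $g_{\eta_i-\eta_{i-1}}$ is a fixed Lipschitz bound. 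Thus the error contribution to $\int_\Om g_v\,d\mu$ is $\sum_j c_j\Vert u_{i}-u_{j}\Vert_{L^1(\text{annulus})}$ with $c_j$ depending only on $\diam$, $\Lip(\eta_j)$, and the doubling constant — all fixed before the subsequence is chosen — so by choosing the subsequence fast enough this whole error sum is $<\eps/2$. Combined with $\sum_i\int_{\Om}(\eta_i-\eta_{i-1})g_{u_i}\,d\mu\le\sup_i\int_\Om g_{u_i}\,d\mu<\Vert Du\Vert(\Om)+\eps/2$ (using again $\sum_i(\eta_i-\eta_{i-1})\le 1$ pointwise and that the supports overlap with bounded multiplicity), we get $\int_\Om g_v\,d\mu<\Vert Du\Vert(\Om)+\eps$. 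The only genuine care needed is the order of quantifiers: all geometric constants ($\Lip(\eta_j)$, diameters of the $\Om_j$, the overlap multiplicity of the covering) are determined by the given data $(\Om_j,\eta_j)$ before we start thinning $(u_i)$, so the subsequence can be extracted greedily, the $i$-th step using all previously fixed constants and the tolerances $\delta_1,\ldots,\delta_i,\eps$.
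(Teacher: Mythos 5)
Your construction of $v$, your $L^1$-estimate on the annuli, and your treatment of the Leibniz error terms $g_{\eta_j}|u_j-u_{j+1}|$ all match the paper's argument. But the step where you bound the main term,
\[
\sum_{i=1}^{\infty}\int_{\Om}(\eta_i-\eta_{i-1})\,g_{u_i}\,d\mu\;\le\;\sup_i\int_{\Om}g_{u_i}\,d\mu,
\]
is false as stated, and this is exactly the nontrivial point of the lemma. The pointwise bound $\sum_i(\eta_i-\eta_{i-1})\le 1$ only lets you pull the sum inside the integral when all the $g_{u_i}$ are the \emph{same} function; here they are different, and nothing you have assumed prevents $g_{u_i}$ from concentrating essentially all of its mass (close to $\Vert Du\Vert(\Om)$) on the annulus $\Om_{i+1}\setminus\Om_{i-1}$ where $\eta_i-\eta_{i-1}$ lives. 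In that case every summand is of size about $\Vert Du\Vert(\Om)$ and the series diverges. Bounded overlap of the supports does not help for the same reason: it controls $\sum_i\int_{\Om_{i+1}\setminus\Om_{i-1}}g\,d\mu$ for a fixed $g$, not for a varying family.

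Two further ingredients are needed, both present in the paper. First, from $\Vert Du\Vert(W)\le\liminf_i\int_W g_{u_i}\,d\mu$ for open $W$ together with $\int_\Om g_{u_i}\,d\mu\to\Vert Du\Vert(\Om)$ one gets the reverse inequality $\limsup_i\int_F g_{u_i}\,d\mu\le\Vert Du\Vert(F)$ on closed sets $F$, so after passing to a further subsequence one may assume $\int_{\Om_{i+1}\setminus\Om_{i-1}}g_{u_i}\,d\mu\le\Vert Du\Vert(\overline{\Om}_{i+1}\setminus\Om_{i-1})+2^{-i}\eps$; this rules out the concentration scenario. Second, even with this localization, summing over all $i$ only gives $\sum_i\Vert Du\Vert(\overline{\Om}_{i+1}\setminus\Om_{i-1})\le 3\Vert Du\Vert(\Om)$, which is a factor $3$ off the target. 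The paper fixes this by exploiting the clause ``terms can be repeated'': one takes $u_1=\dots=u_k$ for a large $k$, so that the head of the sum collapses to $\int_\Om\eta_k g_{u_1}\,d\mu\le\Vert Du\Vert(\Om)+\eps$, while the tail is bounded by $3\Vert Du\Vert(\Om\setminus\Om_k)$, which is small for $k$ large since $\Vert Du\Vert$ is a finite measure. Without these two steps your argument does not yield $\int_\Om g_v\,d\mu<\Vert Du\Vert(\Om)+\eps$.
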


\begin{proof}
	By the definition of the total variation,
	we have $\Vert Du\Vert(W)\le\liminf_{i\to\infty}\int_W g_{u_i}\,d\mu$
	for any open $W\subset \Om$, and thus $\Vert Du\Vert(F)\ge\limsup_{i\to\infty}\int_F g_{u_i}\,d\mu$
	for any closed $F\subset \Om$, and so
	\[
	\limsup_{i\to\infty}\int_{\Om_{j+1}\setminus \Om_{j-1}}g_{u_i}\,d\mu\le
	\Vert Du\Vert(\overline{\Om}_{j+1}\setminus \Om_{j-1})
	\]
	for each $j\in\N$.
	Denote by $L_j>0$ (some) Lipschitz constants of the functions $\eta_j$;
	we can take this to be an increasing sequence.
	By passing to a subsequence of $(u_i)$ (not relabeled), we can assume that
	\begin{equation}\label{eq:choosing the L1 closeness of uis}
	\Vert u_{i-1}-u\Vert_{L^1(\Om_{i})}
	<\min\{\delta_{i-1},\delta_{i},2^{-i+1}\eps/L_{i-1}\}/2
	\end{equation}
	and
	\begin{equation}\label{eq:choosing the energy of uis}
	\int_{\Om}g_{u_1}\,d\mu\le
	\Vert Du\Vert(\Om)+\eps,\quad
	\int_{\Om_{i+1}\setminus \Om_{i-1}}g_{u_i}\,d\mu\le
	\Vert Du\Vert(\overline{\Om}_{i+1}\setminus \Om_{i-1})+2^{-i}\eps
	\end{equation}
	for all $i=2,3\ldots$.
	We can also assume that for $k\in\N$ to be chosen later, $u_1=\ldots =u_k$.
	We have
	\begin{align*}
	\Vert v-u\Vert_{L^1(\Om_{j}\setminus \Om_{j-1})}
	&=\Vert \sum_{i=1}^{\infty}(\eta_i-\eta_{i-1})u_{i}-u\Vert_{L^1(\Om_{j}\setminus \Om_{j-1})}\\
	&=\Vert \eta_{j-1}u_{j-1}+(1-\eta_{j-1})u_{j}-u\Vert_{L^1(\Om_{j}\setminus \Om_{j-1})}\\
	&\le \Vert u_{j-1}-u\Vert_{L^1(\Om_{j}\setminus \Om_{j-1})}
	+\Vert u_{j}-u\Vert_{L^1(\Om_{j}\setminus \Om_{j-1})}\\
	&<\delta_j
	\end{align*}
	as desired.
	Let $v_1:=u_1$ and recursively $v_{i+1}:=\eta_{i} v_i+(1-\eta_{i})u_{i+1}$.
	We see that $v=\lim_{i\to\infty}v_i$.
	By a Leibniz rule \cite[Lemma 2.18]{BB}, the minimal $1$-weak upper gradient
	of $v_2$ in $\Om$ satisfies
	\[
	g_{v_2}\le g_{\eta_1}|u_1-u_2|+\eta_1 g_{u_1}+(1-\eta_1)g_{u_2}.
	\]
	Inductively, we get
	\[
	g_{v_i}\le \sum_{j=1}^{i-1} g_{\eta_j}|u_j-u_{j+1}|+
	\sum_{j=1}^{i-1}(\eta_{j}-\eta_{j-1}) g_{u_j}+(1-\eta_{i-1})g_{u_{i}};
	\]
	to prove this, assume that it holds for the index $i$. Then we have by applying a Leibniz rule as above, and noting that $g_{\eta_i}$ can be nonzero only in $\Om_{i+1}\setminus \Om_i$ (see \cite[Corollary 2.21]{BB}), where $v_i=u_i$,
	\begin{align*}
	g_{v_{i+1}}
	&\le g_{\eta_{i}}|v_i-u_{i+1}|+\eta_{i}
	g_{v_i}+(1-\eta_{i})g_{u_{i+1}}\\
	&\le g_{\eta_{i}}|u_{i}-u_{i+1}|+\sum_{j=1}^{i-1} g_{\eta_j}|u_j-u_{j+1}|\\
	&\qquad
	+\sum_{j=1}^{i-1}(\eta_j-\eta_{j-1})g_{u_{j}}+(\eta_{i}-\eta_{i-1})g_{u_{i}}
	+(1-\eta_{i})g_{u_{i+1}}.
	\end{align*}
	This completes the induction. Thus in each $\Om_i$, where $v=v_{i+1}$,
	the minimal $1$-weak upper gradient of $v$ in $\Om_i$ satisfies
	\begin{align*}
	g_{v}=g_{v_{i+1}}
	&\le \sum_{j=1}^{\infty} g_{\eta_j}|u_j-u_{j+1}|+
	\sum_{j=1}^{\infty}(\eta_{j}-\eta_{j-1}) g_{u_j}\\
	&= \sum_{j=1}^{\infty} g_{\eta_j}|u_j-u_{j+1}|+\eta_{k}g_{u_1}+
	\sum_{j=k+1}^{\infty}(\eta_{j}-\eta_{j-1}) g_{u_j},
	\end{align*}
	since $u_1=\ldots =u_k$. Thus
	\begin{align*}
	&\int_{\Om_i} g_v\,d\mu\\
	&\qquad\le \sum_{j=1}^{\infty}\int_{\Om} g_{\eta_j}|u_j-u_{j+1}|\,d\mu+
	\int_{\Om}\eta_{k}g_{u_1}\,d\mu+\sum_{j=k+1}^{\infty}\int_{\Om} (\eta_{j}-\eta_{j-1}) g_{u_j}\,d\mu\\
	&\qquad\overset{\eqref{eq:choosing the energy of uis}}{\le} \sum_{j=1}^{\infty}L_j\Vert u_j-u_{j+1}\Vert_{L^1(\Om_{j+1}\setminus \Om_j)}
	+\Vert Du\Vert(\Om)+\eps+\sum_{j=k+1}^{\infty}\int_{\Om_{j+1}\setminus \Om_{j-1}} g_{u_j}\,d\mu\\
	&\qquad\overset{\eqref{eq:choosing the L1 closeness of uis},\eqref{eq:choosing the energy of uis}}{\le} 3\eps+\Vert Du\Vert(\Om)
	+\sum_{j=k+1}^{\infty}\Vert Du\Vert(\overline{\Om}_{j+1}\setminus \Om_{j-1})\\
	&\qquad\le 3\eps
	+\Vert Du\Vert(\Om)+3\Vert Du\Vert(\Om\setminus \Om_{k})\\
	&\qquad\le \Vert Du\Vert(\Om)+4\eps,
	\end{align*}
	if we choose $k$ large enough.
	Note that $g_v$ does not depend on $i$, see
	\cite[Lemma 2.23]{BB}, and so it is well
	defined on $\Om$.
	Since $g_v$ is the minimal $1$-weak upper gradient of $v$ in
	each $\Om_i$, it is clearly
	also (the minimal) $1$-weak upper gradient of $v$ in $\Om$.
	Then by Lebesgue's monotone convergence theorem,
	\[
	\int_{\Om} g_v\,d\mu\le \Vert Du\Vert(\Om)+4\eps.
	\]
\end{proof}

In a rather similar way, we prove the following lemma which we will need later.

\begin{lemma}\label{lem:BV pasting lemma}
	Let $u\in L^1_{\loc}(\Om)$ with $\Vert Du\Vert(\Om)<\infty$
	and let $(u_i)\subset L^1_{\loc}(\Om)$ such that
	$\Vert u_i- u\Vert_{L^{\infty}(\Om)}\to 0$
	and
	\[
	\lim_{i\to\infty}\Vert Du_i\Vert(\Om)=\Vert Du\Vert(\Om).
	\]
	Let $\eps>0$. Then we find a function
	$v\ge u$ such that $\Vert v-u\Vert_{L^1(\Om)}<\eps$, $\Vert v-u\Vert_{L^{\infty}(\Om)}<\eps$,
	$\Vert Dv\Vert(\Om)<\Vert Du\Vert(\Om)+\eps$,
	\[
	\lim_{\Om\ni y\to x}|v-u|^{\vee}(y)=0\quad\textrm{for all }x\in \partial\Om,
	\]
 	and $S_v \subset \bigcup_{i=1}^{\infty}S_{u_i}$.
	Moreover, $\Vert Dv\Vert^c(\Om)=0$ if $\Vert Du_i\Vert^c(\Om)=0$ for all
	$i\in\N$.
\end{lemma}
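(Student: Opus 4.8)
The plan is to mimic the construction in the proof of Lemma~\ref{lem:choosing approximating function}, gluing the given functions $u_i$ themselves (after a tiny upward shift) rather than Lipschitz approximations. First I fix open sets $\Om_1\Subset\Om_2\Subset\cdots$ with $\bigcup_j\Om_j=\Om$, cutoffs $\eta_j\in\Lip_c(\Om_{j+1})$ with $0\le\eta_j\le 1$, $\eta_j\equiv 1$ on $\Om_j$, $\eta_0\equiv 0$, and increasing Lipschitz constants $L_j$ of the $\eta_j$. After passing to a subsequence of $(u_i)$ (relabeled, with $u_1=\cdots=u_k$ for an index $k$ to be fixed large), I set
\[
w_i:=u_i+\|u_i-u\|_{L^\infty(\Om)},\qquad \eps_i:=\|w_i-u\|_{L^\infty(\Om)}\le 2\|u_i-u\|_{L^\infty(\Om)}\to 0,
\]
so that $w_i\ge u$ a.e., $S_{w_i}=S_{u_i}$, $\|Dw_i\|=\|Du_i\|$ as measures, $w_i\to u$ uniformly and $\|Dw_i\|(\Om)\to\|Du\|(\Om)$; then I put $v:=\sum_{i=1}^\infty(\eta_i-\eta_{i-1})w_i$, equivalently $v=v_{i+1}$ on $\Om_i$ with $v_1:=w_1$ and $v_{i+1}:=\eta_iv_i+(1-\eta_i)w_{i+1}$. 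On $\Om_j$ one has the closed form $v=\sum_{i=1}^j(\eta_i-\eta_{i-1})w_i$, a convex combination of the $w_i$, and also $v=u+\beta$ with $\beta:=\sum_i(\eta_i-\eta_{i-1})(w_i-u)$ bounded, $\|\beta\|_{L^\infty(\Om)}\le\sup_i\eps_i$. The elementary properties follow once the subsequence is chosen with the $\eps_i$ decaying fast enough (relative to $L_i$ and $\mu(\Om_{i+1})$): $v=u+\beta\ge u$ and $\|v-u\|_{L^\infty(\Om)}<\eps$; on $\Om_j\setminus\Om_{j-1}$ one has $v=\eta_{j-1}w_{j-1}+(1-\eta_{j-1})w_j$, so $|v-u|\le\max\{\eps_{j-1},\eps_j\}$ there, giving $\|v-u\|_{L^1(\Om)}<\eps$ for a summable choice of $\eps_j\mu(\Om_{j+1})$; and for $x\in\partial\Om$ some ball $B(x,r_j)$ misses the compact set $\overline{\Om}_j\subset\Om$, whence $|v-u|^\vee(y)\le\sup_{l\ge j}\eps_l$ for $y\in B(x,r_j/2)\cap\Om$, so $\lim_{\Om\ni y\to x}|v-u|^\vee(y)=0$.

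For the total variation I run the induction of Lemma~\ref{lem:choosing approximating function} verbatim, replacing the Newtonian Leibniz rule by its $\BV$ analogue for convex combinations,
\[
\|D(\eta f+(1-\eta)g)\|\le\eta\,\|Df\|+(1-\eta)\,\|Dg\|+g_\eta|f-g|\,\mu
\]
(as measures, for $\eta\in\Lip_c$, $0\le\eta\le 1$, $f-g\in L^\infty_{\loc}$; obtained from the Newtonian version by approximation and lower semicontinuity as in that proof, and applicable here since $v_i-w_{i+1}$ is bounded, being $\beta_i-(w_{i+1}-u)$ with $v_i=u+\beta_i$, $\beta_i$ bounded). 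Since $g_{\eta_i}$ vanishes $\mu$-a.e. outside $\Om_{i+1}\setminus\Om_i$, where $v_i=w_i$, and $\eta_i\equiv 1$ on $\supp\eta_{i-1}$ forces $\eta_i(1-\eta_{i-1})=\eta_i-\eta_{i-1}$, the induction yields, as measures on $\Om$,
\[
\|Dv_{i+1}\|\le\sum_{j=1}^i g_{\eta_j}|w_j-w_{j+1}|\,\mu+\sum_{j=1}^i(\eta_j-\eta_{j-1})\,\|Dw_j\|+(1-\eta_i)\,\|Dw_{i+1}\|.
\]
Telescoping the first $k$ terms of the middle sum into $\eta_k\|Dw_1\|$ (using $w_1=\cdots=w_k$), restricting to $\Om_i$ (where the last term vanishes since $\eta_i\equiv 1$) and integrating gives
\[
\|Dv\|(\Om_i)=\|Dv_{i+1}\|(\Om_i)\le\sum_{j\ge 1}L_j(\eps_j+\eps_{j+1})\mu(\Om_{j+1})+\|Du_1\|(\Om)+\sum_{j>k}\|Dw_j\|(\overline{\Om}_{j+1}\setminus\Om_{j-1}).
\]
The first sum is $<\eps$ by the choice of the $\eps_j$; the second is $\le\|Du\|(\Om)+\eps$ when the repeated first function is chosen far enough along the sequence; and for the third I use that $u_i\to u$ in $L^1_{\loc}(\Om)$ together with $\|Du_i\|(\Om)\to\|Du\|(\Om)<\infty$ forces $\limsup_m\|Du_m\|(F)\le\|Du\|(F)$ for every closed $F\subset\Om$, so each subsequence index can be taken large enough that $\|Dw_j\|(\overline{\Om}_{j+1}\setminus\Om_{j-1})\le\|Du\|(\overline{\Om}_{j+1}\setminus\Om_{j-1})+2^{-j}\eps$; these sets lie in $\Om\setminus\Om_k$ and overlap at most twice, so the third sum is $\le 2\|Du\|(\Om\setminus\Om_k)+\eps<2\eps$ once $k$ is fixed large. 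Letting $i\to\infty$ gives $\|Dv\|(\Om)\le\|Du\|(\Om)+C\eps$, and rerunning with $\eps/C$ in place of $\eps$ finishes this part.

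For the last two assertions I use the closed form $v=w_j+\sum_{i<j}\eta_i(w_i-w_{i+1})$ on $\Om_j$. Each $w_i-w_{i+1}$ is a bounded $\BV$ function with $S_{w_i-w_{i+1}}\subset S_{w_i}\cup S_{w_{i+1}}$; multiplying a bounded $\BV$ function by a Lipschitz function does not enlarge its jump set; and $S_{f+g}\subset S_f\cup S_g$. Hence $S_v\cap\Om_j\subset\bigcup_{i\le j}S_{w_i}=\bigcup_{i\le j}S_{u_i}$, so $S_v\subset\bigcup_iS_{u_i}$. If moreover every $u_i\in\SBV(\Om)$, then so is every $w_i$, and $\SBV$ is stable under finite sums and under multiplication of bounded functions by Lipschitz functions — because in each case the singular part of the resulting variation measure is dominated by a sum of pure jump measures, hence concentrated on a set that is $\sigma$-finite with respect to $\mathcal H$, on which $\|D\cdot\|^c$ vanishes — so $v|_{\Om_j}\in\SBV(\Om_j)$ for every $j$, whence $\|Dv\|^c(\Om)=\lim_j\|Dv\|^c(\Om_j)=0$. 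I expect the total variation estimate to be the main obstacle: one must arrange the recursive Leibniz bound so that the ``interior'' contributions telescope to roughly $\|Du\|(\Om)$ instead of accumulating over $j$, which is precisely why the convex-combination form of the Leibniz inequality (coefficients $\eta$ and $1-\eta$ adding to $1$), the $w_1=\cdots=w_k$ device, and the weak$^*$ convergence $\|Du_i\|\to\|Du\|$ for the residual ``shell'' terms are all needed.
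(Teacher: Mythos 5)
Your proposal is correct and follows essentially the same route as the paper's proof: the same partition-of-unity gluing $v=\sum_i(\eta_i-\eta_{i-1})w_i$ after the upward shift, the same inductive $\BV$ Leibniz estimate (the paper cites \cite[Lemma 3.2]{HKLS} for the convex-combination inequality you state), and the same exploitation of the weak* convergence $\Vert Du_i\Vert\to\Vert Du\Vert$ --- the paper merely tests this convergence directly against $\eta_j-\eta_{j-1}$ and $1-\eta_{i-1}$ rather than using your telescoping device with $w_1=\cdots=w_k$ and upper semicontinuity on closed sets, and its treatment of the Cantor part via the domination of $\Vert Dv\Vert$ by $\sum_j g_{\eta_j}|u_j-u_{j+1}|\,d\mu+\sum_j(\eta_j-\eta_{j-1})\,d\Vert Du_j\Vert$ is the measure-level version of your ``$\SBV$ is stable under sums and Lipschitz multiplication'' argument. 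The one step to tighten is the passage from $\sup_i\Vert Dv\Vert(\Om_i)$ to $\Vert Dv\Vert(\Om)$, which presupposes inner regularity of a variation functional not yet known to be finite; the paper sidesteps this by bounding $\Vert Dv_{i+1}\Vert(\Om)$ on all of $\Om$ (the residual term $(1-\eta_i)\Vert Dw_{i+1}\Vert(\Om)$ is controlled by the weak* convergence) and then applying lower semicontinuity under the convergence $v_i\to v$ in $L^1_{\loc}(\Om)$, and you should do the same since your measure inequality already holds on all of $\Om$.
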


\begin{proof}
	Take nonempty open sets
	$\Om_1\Subset \Om_2\Subset \ldots \Subset\bigcup_{j=1}^{\infty}\Om_j=\Om$,
	and $\Om_0:=\emptyset$. Also take
	functions $\eta_j\in \Lip_c(\Om_{j+1})$ such that $0\le \eta_j\le 1$ on $X$ and
	$\eta_j=1$ on $\Om_{j}$ for each $j\in\N$, and $\eta_0\equiv 0$.
	
	By replacing the functions $u_i$ with
	$u_i+\Vert u_i-u\Vert_{L^{\infty}(\Om)}$, we can assume that
	$u_i\ge u$ on $\Om$ for each $i\in\N$.
	By passing to a subsequence (not relabeled),
	we can assume that for each $i\in\N$,
	\begin{equation}\label{eq:choice of uis and uniform convergence}
	\Vert u_i-u\Vert_{L^{\infty}(\Om)}<2^{-i}\eps\min\left\{1,\mu(\{\eta_i>0\})^{-1},\,
	\int_{\Om}g_{\eta_{i-1}}\,d\mu,
	\int_{\Om}g_{\eta_i}\,d\mu\right\}.
	\end{equation}
	From the fact that $\lim_{i\to\infty}\Vert Du_i\Vert(\Om)=\Vert Du\Vert(\Om)$
	and from the lower semicontinuity of the total variation in open sets,
	it follows that for each $j\in\N$ (see \cite[Proposition 1.80]{AFP})
	\[
	\lim_{i\to\infty}\int_{\Om} (1-\eta_{j-1}) \,d\Vert Du_i\Vert
	=\int_{\Om} (1-\eta_{j-1})\,d\Vert Du\Vert
	\]
	and
	\[
	\lim_{i\to\infty}\int_{\Om} (\eta_{j}-\eta_{j-1}) \,d\Vert Du_i\Vert
	=\int_{\Om} (\eta_{j}-\eta_{j-1})\,d\Vert Du\Vert.
	\]
	Thus we can also assume that for each $i\in\N$,
	\begin{equation}\label{eq:choice of uis and weak convergence 1 minus eta}
	\int_{\Om} (1-\eta_{i-1}) \,d\Vert Du_i\Vert
	<\int_{\Om} (1-\eta_{i-1})\,d\Vert Du\Vert+2^{-i}\eps
	\end{equation}
	and
	\begin{equation}\label{eq:choice of uis and weak convergence}
	\int_{\Om} (\eta_{i}-\eta_{i-1}) \,d\Vert Du_i\Vert
	<\int_{\Om} (\eta_{i}-\eta_{i-1})\,d\Vert Du\Vert+2^{-i}\eps.
	\end{equation}
	Let
	\begin{equation}\label{eq:definition of w by means of etas}
	v:=\sum_{j=1}^{\infty}(\eta_{j}-\eta_{j-1})u_{j}.
	\end{equation}
	Then $v\ge u$ and
	\begin{align*}
	\Vert v-u\Vert_{L^1(\Om)}
	=\Vert \sum_{j=1}^{\infty}(\eta_j-\eta_{j-1})(u_{j}-u)\Vert_{L^1(\Om)}
	\le \sum_{j=1}^{\infty} \mu(\{\eta_j>0\})\Vert u_j-u\Vert_{L^{\infty}(\Om)}
	<\eps
	\end{align*}
	by \eqref{eq:choice of uis and uniform convergence}.
	Clearly also $\Vert v-u\Vert_{L^{\infty}(\Om)}<\eps$.
	
	Let $v_1:=u_1$
	and then recursively $v_{i+1}:=\eta_{i} v_i+(1-\eta_{i})u_{i+1}$.
	Then $v=\lim_{i\to\infty}v_i$.
	By induction, we show that in $\Om$,
	\begin{equation}\label{eq:formula for Dvi}
	d\Vert Dv_i\Vert
	\le \sum_{j=1}^{i-1} g_{\eta_j}|u_j-u_{j+1}|\,d\mu
	+\sum_{j=1}^{i-1}(\eta_{j}-\eta_{j-1}) d\Vert Du_j\Vert+(1-\eta_{i-1})\,d\Vert Du_{i}\Vert;
	\end{equation}
	this clearly holds for $i=1$.
	Note that $g_{\eta_i}$ can be nonzero only in $\Om_{i+1}\setminus \Om_i$
	(see \cite[Corollary 2.21]{BB}),
	where $v_i=u_i$.
	Assuming that \eqref{eq:formula for Dvi} holds for the index $i$,
	by a Leibniz rule (see \cite[Lemma 3.2]{HKLS}) we get
	\begin{align*}
	d\Vert Dv_{i+1}\Vert
	&\le g_{\eta_{i}}|v_i-u_{i+1}|\,d\mu+\eta_{i}\,
	d\Vert Dv_{i}\Vert+(1-\eta_{i})\,d\Vert Du_{i+1}\Vert\\
	&\le g_{\eta_{i}}|u_{i}-u_{i+1}|\,d\mu
	+\sum_{j=1}^{i-1} g_{\eta_j}|u_j-u_{j+1}|\,d\mu
	+\sum_{j=1}^{i-1}(\eta_{j}-\eta_{j-1}) d\Vert Du_j\Vert\\
	&\qquad \qquad+(\eta_{i}-\eta_{i-1})\,d\Vert Du_{i}\Vert+(1-\eta_{i})\,d\Vert Du_{i+1}\Vert.
	\end{align*}
	This completes the induction.
	Thus since $v_i\to v$ in $L^1_{\loc}(\Om)$, we get
	\begin{align*}
	&\Vert Dv\Vert(\Om)\le\liminf_{i\to\infty}\Vert Dv_i\Vert(\Om)\\
	&\le \sum_{j=1}^{\infty}\int_{\Om} g_{\eta_j}|u_j-u_{j+1}|\,d\mu+
	\sum_{j=1}^{\infty}\int_{\Om} (\eta_{j}-\eta_{j-1}) \,d\Vert Du_j\Vert
	+ \liminf_{i\to\infty}(1-\eta_{i-1})\,d\Vert Du_{i}\Vert\\
	&< 2\sum_{j=1}^{\infty}2^{-j}\eps
	+\sum_{j=1}^{\infty}\left(\int_{\Om} (\eta_{j}-\eta_{j-1})\,d\Vert Du\Vert
	+2^{-j}\eps\right)
	\qquad\textrm{by }\eqref{eq:choice of uis and uniform convergence},
	\eqref{eq:choice of uis and weak convergence},
	\eqref{eq:choice of uis and weak convergence 1 minus eta}\\
	&= 3\eps+\Vert Du\Vert(\Om),
	\end{align*}
	as desired.
	Next, note that \eqref{eq:definition of w by means of etas}
	is a locally finite sum.
	If $x\notin S_{u_j}$, clearly
	$x\notin S_{(\eta_{j}-\eta_{j-1})u_{j}}$. Thus
	$S_v\subset \bigcup_{j=1}^{\infty}S_{u_j}$.
	
	If $\Vert Du_i\Vert^c(\Om)=0$ for all $i\in\N$, we show that
	$\Vert Dv\Vert^c(\Om)=0$
	as follows. Let $F\subset \Om$ be a $\mu$-negligible set such that
	$\Vert Dv\Vert^c(\Om\setminus F)=0$.
	Note that $\Vert Dv\Vert=\Vert Dv_{i+1}\Vert$
	in $\Om_i$, and so by \eqref{eq:formula for Dvi},
	in $\Om_i$ we have
	\begin{equation}\label{eq:estimate for Dv}
	d\Vert Dv\Vert=d\Vert Dv_{i+1}\Vert
	\le \sum_{j=1}^{\infty} g_{\eta_j}|u_j-u_{j+1}|\,d\mu
	+\sum_{j=1}^{\infty}(\eta_{j}-\eta_{j-1}) d\Vert Du_j\Vert.
	\end{equation}
	Since this inequality holds in every $\Om_i$,
	it holds in $\Om$.
	By the discussion after \eqref{eq:variation measure decomposition},
	\begin{align*}
	\Vert Dv\Vert^c(F)
	&=\Vert Dv\Vert^c\left(F\setminus \bigcup_{i=1}^{\infty}S_{u_i}\right)\\
	&\le \sum_{j=1}^{\infty}\Vert Du_j\Vert\left(F\setminus\bigcup_{i=1}^{\infty}S_{u_i}\right)\quad\textrm{by }\eqref{eq:estimate for Dv}\\
	&=0
	\end{align*}
	since $\Vert Du_j\Vert^a(F)=0$ and $\Vert Du_j\Vert^j(\Om\setminus S_{u_j})=0$. Thus $\Vert Dv\Vert^c(\Om)=0$.
\end{proof}

The next simple lemma shows the existence of suitable cutoff functions.

\begin{lemma}\label{lem:cutoff function lemma}
	Let $W\subset\Om\subset X$ be open sets and let $H\subset W$ be relatively closed
	(in the subspace topology of $\Om$). Then there is
	a function $\eta\in \liploc(\Om)$ such that $0\le \eta\le 1$,
	$\eta=1$ on $H$, and $\supp_{\Om} \eta\subset W$.
	
	Moreover, if $H$ is bounded, also $\supp_{\Om}\eta$ is bounded.
\end{lemma}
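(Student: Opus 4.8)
The plan is to peel off the ``moreover'' statement together with two degenerate cases, and then to build $\eta$ as a suitably \emph{truncated} Urysohn function for the disjoint pair $(H,\Om\setminus W)$ inside the metric space $\Om$ (with the restricted metric). First I would dispose of the easy cases: if $H=\emptyset$ take $\eta\equiv 0$, and if $W=\Om$ take $\eta\equiv 1$; all the claimed properties are then immediate. For the ``moreover'' part, assuming $H$ is bounded and nonempty, I would fix a ball $B(x_0,R)\supset H$ and apply the (not yet proved) main assertion with $W$ replaced by $\widetilde W:=W\cap B(x_0,R)$, which is still open with $H\subset\widetilde W\subset\Om$ and leaves the relative closedness of $H$ in $\Om$ untouched; the resulting $\eta$ satisfies $\supp_{\Om}\eta\subset\widetilde W\subset W\cap B(x_0,R)$, hence has bounded support. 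So it remains to establish the main assertion under the standing assumptions $\emptyset\neq H$ and $W\subsetneq\Om$.

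For the construction I would set, for $x\in\Om$,
\[
d_1(x):=\dist(x,H),\qquad d_2(x):=\dist(x,\Om\setminus W).
\]
These are nonnegative and $1$-Lipschitz on $\Om$, and since $H$ and $\Om\setminus W$ are relatively closed in $\Om$ they vanish exactly on $H$ and on $\Om\setminus W$, respectively; as these two sets are disjoint, $d_1+d_2>0$ everywhere on $\Om$. Hence
\[
\sigma:=\frac{d_1}{d_1+d_2}
\]
is well defined and lies in $\liploc(\Om)$ — it is a quotient of Lipschitz functions with a continuous, strictly positive, hence locally bounded-below denominator — takes values in $[0,1]$, equals $0$ on $H$, and equals $1$ on $\Om\setminus W$. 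I would then define
\[
\eta:=\max\{0,\,1-2\sigma\}.
\]

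To verify the properties: since $\sigma\in\liploc(\Om)$ and $t\mapsto\max\{0,1-2t\}$ is Lipschitz, $\eta\in\liploc(\Om)$; clearly $0\le\eta\le 1$ (using $\sigma\ge 0$) and $\eta=1$ on $H$ (where $\sigma=0$). For the support, note $\{\eta\neq 0\}=\{\sigma<1/2\}$, which is relatively open in $\Om$, and by continuity of $\sigma$ its closure in $\Om$ is contained in $\{\sigma\le 1/2\}$; since $\sigma\equiv 1$ on $\Om\setminus W$, the set $\{\sigma\le 1/2\}$ is disjoint from $\Om\setminus W$, i.e. $\{\sigma\le 1/2\}\subset W$. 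Because $\{\eta\neq 0\}\subset\Om$, any $x\in\supp_{\Om}\eta$ meets $\{\eta\neq 0\}$ in every ball $B(x,r)$ and so lies in the closure of $\{\eta\neq 0\}$ intersected with $\Om$, which we have just bounded by $W$; thus $\supp_{\Om}\eta\subset W$. The ``moreover'' part then follows as described in the first paragraph. (If $H$ is bounded one may alternatively incorporate the ball directly by using $d_2(x):=\dist\bigl(x,(\Om\setminus W)\cup(X\setminus B(x_0,2R))\bigr)$, but reducing to the main assertion is cleaner.)

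The only genuinely delicate point — everything else being routine distance-function bookkeeping — is securing the \emph{strict} inclusion $\supp_{\Om}\eta\subset W$ rather than merely $\supp_{\Om}\eta\subset\overline W\cap\Om$. A bare Urysohn function for $(H,\Om\setminus W)$ typically has support equal to $\overline W\cap\Om$, which may genuinely meet $\partial W$ (notably near $\partial\Om$, where $d_2$ can be arbitrarily small along points of $\Om$). Truncating $\sigma$ below the level $1/2$ remedies this precisely because $\sigma$ is identically $1$ — not merely bounded away from $0$ — on all of $\Om\setminus W$, so that $\{\sigma\le 1/2\}$ stays inside $W$; crucially no compactness of $H$ or of $\overline W$ is invoked, so the argument is insensitive to $H$ touching $\partial\Om$.
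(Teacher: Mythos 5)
Your proof is correct, but it takes a genuinely different route from the paper. The paper exhausts $\Om$ by open sets $\Om_1\Subset\Om_2\Subset\cdots$ and glues together compactly supported Urysohn functions $\eta_j\in\Lip_c\bigl(W\cap\Om_{j+2}\setminus\overline{\Om_{j-1}}\bigr)$ for the compact pieces $H\cap\overline{\Om}_{j+1}\setminus\Om_j$, setting $\eta:=\sup_j\eta_j$; local Lipschitzness comes from the local finiteness of the family, the inclusion $\supp_{\Om}\eta\subset W$ from the supports of the $\eta_j$, and boundedness in the ``moreover'' case from additionally forcing each $\eta_j$ to vanish outside a $1$-neighbourhood of $H$. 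You instead write down a single global formula, $\eta=\max\{0,\,1-2d_1/(d_1+d_2)\}$ with $d_1=\dist(\cdot,H)$ and $d_2=\dist(\cdot,\Om\setminus W)$, and your key observation — that truncating the Urysohn quotient strictly below the level at which it is identically $1$ on $\Om\setminus W$ forces $\{\sigma\le 1/2\}\subset W$ and hence $\supp_{\Om}\eta\subset W$ without any compactness — is exactly the right fix for the one delicate point; the relative closedness of $H$ and of $\Om\setminus W$ in $\Om$ guarantees $d_1+d_2>0$ there, and continuity of the denominator gives $\sigma\in\liploc(\Om)$. Your construction is more elementary and self-contained (no exhaustion, no partition-of-unity bookkeeping), at the cost of handling the degenerate cases $H=\emptyset$, $W=\Om$ and the ``moreover'' reduction separately; the paper's construction is the more standard template and produces the bounded-support variant with essentially no extra work. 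Both arguments are complete; just make sure, in your reduction for the bounded case, that $\widetilde W=W\cap B(x_0,R)$ either is a proper subset of $\Om$ or that $\Om$ itself is bounded, which you have implicitly covered.
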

\begin{proof}
Take open sets $\Om_1\Subset \Om_2\Subset \ldots \Subset\bigcup_{j=1}^{\infty}\Om_j=\Om$
and $\Om_0:=\emptyset$.
	Note that for each $j=0,1,\ldots$, $H\cap \overline{\Om}_{j+1}\setminus \Om_{j}$ is a compact subset of the open set
	$W\cap \Om_{j+2}\setminus \overline{\Om_{j-1}}$. Take
	$\eta_j\in \Lip_c(W\cap \Om_{j+2}\setminus \overline{\Om_{j-1}})$ such that $0\le \eta_j\le 1$ and $\eta_j=1$ on
	$H\cap \overline{\Om}_{j+1}\setminus \Om_{j}$. Let
	\[
	\eta:=\sup_{j\in\N}\eta_j.
	\]
	Now it is straightforward to check that $\eta$ has the required
	properties.
	If $H$ is bounded, we can also choose the $\eta_j$'s so that $\eta_j=0$ outside
	a $1$-neighborhood of $H$, ensuring that $\supp_{\Om}\eta$ is bounded.
\end{proof}

\begin{lemma}\label{lem:extension of compactly supported function}
	Let $W\subset\Om\subset X$ be open sets and let $u\in\BV(W)$
	such that $\supp_{\Om} u\subset W$.
	Then there exists a sequence $(u_i)\subset\liploc(W)$
	such that $\supp_{\Om} u_i$ are subsets of $W$ and bounded if
	$\supp_{\Om} u$ is,
	$u_i\to u$ in $L^1(W)$, and
	\begin{equation}\label{eq:total variation in W}
	\Vert Du\Vert(W)=\lim_{i\to\infty}\int_{W}g_{u_i}\,d\mu,
	\end{equation}
	where each $g_{u_i}$ is the minimal $1$-weak upper gradient of $u_i$ in $W$.
	Moreover, $u\in\BV_0(W,\Om)$ with $\Vert Du\Vert(W)=\Vert Du\Vert(\Om)$
	(by zero extension to $\Om\setminus W$),
	and then \eqref{eq:total variation in W}
	holds also with $W$ replaced by $\Om$.
\end{lemma}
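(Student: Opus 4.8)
The plan is to first produce the approximating locally Lipschitz functions on $W$, and then transfer everything to $\Om$ by zero extension. For the first part I would apply the definition of the total variation in the open set $W$: since $\Vert Du\Vert(W)<\infty$, there is a sequence $(w_i)\subset N^{1,1}_{\loc}(W)$ with $w_i\to u$ in $L^1_{\loc}(W)$ and $\int_W g_{w_i}\,d\mu\to\Vert Du\Vert(W)$; by density of $\liploc$ in $N^{1,1}_{\loc}$ (\cite[Theorem 5.47]{BB}) and a diagonal argument we may take $w_i\in\liploc(W)$. This is not yet enough because we need convergence in $L^1(W)$ (not merely $L^1_{\loc}$) and, crucially, control on the supports: we must have $\supp_\Om w_i\subset W$, and bounded if $\supp_\Om u$ is. To fix the supports, let $H:=\supp_\Om u$, a relatively closed subset of $\Om$ contained in $W$ (by hypothesis), and use Lemma \ref{lem:cutoff function lemma} to get $\eta\in\liploc(\Om)$ with $0\le\eta\le 1$, $\eta=1$ on $H$, $\supp_\Om\eta\subset W$ (and bounded if $H$ is). Then set $u_i:=\eta w_i$. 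Since $u=\eta u$ (as $u$ vanishes outside $H$ where $\eta=1$, and $H\subset W$), we still have $u_i\to u$ in $L^1_{\loc}(W)$; and now $\supp_\Om u_i\subset\supp_\Om\eta\subset W$, bounded when $H$ is.

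The point where care is needed is upgrading to $L^1(W)$ convergence and keeping $\int_W g_{u_i}\,d\mu\to\Vert Du\Vert(W)$. Here I would invoke Lemma \ref{lem:choosing approximating function}: take an exhaustion $\Om_1\Subset\Om_2\Subset\cdots$ of $W$ (not of $\Om$) with cutoffs $\eta_j$ as in that lemma, feed in the sequence $(u_i)$ just constructed (which lies in $\liploc(W)$, converges to $u$ in $L^1_{\loc}(W)$, and has $\int_W g_{u_i}\,d\mu\to\Vert Du\Vert(W)$), and choose the tolerances $\delta_j$ with $\sum_j\delta_j<\infty$. The lemma then produces $v\in\liploc(W)$ — built as $\sum_i(\eta_i-\eta_{i-1})u_i$, a locally finite sum whose terms all have $\Om$-support inside $\supp_\Om\eta\subset W$ — with $\Vert v-u\Vert_{L^1(W)}\le\sum_j\delta_j$ arbitrarily small and $\int_W g_v\,d\mu<\Vert Du\Vert(W)+\eps$. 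Running this for $\eps=1/i$ (and $\sum\delta_j<1/i$) yields the desired sequence $(u_i)\subset\liploc(W)$ with $u_i\to u$ in $L^1(W)$ and $\limsup_i\int_W g_{u_i}\,d\mu\le\Vert Du\Vert(W)$; the reverse inequality $\Vert Du\Vert(W)\le\liminf_i\int_W g_{u_i}\,d\mu$ is automatic from the definition of total variation, giving \eqref{eq:total variation in W}.

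For the final assertion, extend $u$ by zero to $\Om\setminus W$; I must check $u\in\BV(\Om)$ with $\Vert Du\Vert(\Om)=\Vert Du\Vert(W)$. Extend each $u_i$ by zero as well: since $\supp_\Om u_i\subset W$ is relatively closed in $\Om$ and $u_i\in\liploc(W)$ vanishes near $\bdy\cap$ its support, the zero extension lies in $\liploc(\Om)$ with $g_{u_i}$ (extended by zero) its minimal $1$-weak upper gradient in $\Om$, so $\int_\Om g_{u_i}\,d\mu=\int_W g_{u_i}\,d\mu$. As $u_i\to u$ in $L^1(\Om)$ (the $L^1(W)$ convergence plus everything vanishing on $\Om\setminus W$), the definition of total variation gives $\Vert Du\Vert(\Om)\le\liminf_i\int_\Om g_{u_i}\,d\mu=\Vert Du\Vert(W)$, while $\Vert Du\Vert(W)\le\Vert Du\Vert(\Om)$ is monotonicity; hence equality, and in particular \eqref{eq:total variation in W} holds with $W$ replaced by $\Om$. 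Finally $u\in\BV_0(W,\Om)$ follows from Theorem \ref{thm:characterization of BV function with zero bdry values}: the truncations or tail pieces of the sequence $(u_i)$ (each with $\Om$-support a subset of $W$, bounded after intersecting with a large ball) converge to $u$ in $\BV(\Om)$, witnessing condition (2); alternatively one observes directly that $u^\wedge=u^\vee=0$ $\mathcal H$-a.e. on $\Om\setminus W$ since $u\equiv 0$ on the open set $\Om\setminus\supp_\Om u\supset\Om\setminus W$ and one uses Proposition \ref{prop:Lebesgue points for Sobolev functions open set} together with \eqref{eq:null sets of Hausdorff measure and capacity} on the $\liploc$ approximants. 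The only real subtlety throughout is bookkeeping the $\Om$-supports so that the zero extension is genuinely locally Lipschitz on $\Om$ and does not create extra variation on $\bdy W\cap\Om$; this is exactly what the hypothesis $\supp_\Om u\subset W$ and Lemma \ref{lem:cutoff function lemma} are there to guarantee.
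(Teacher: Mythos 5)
Your overall strategy is the same as the paper's --- a cutoff $\eta$ from Lemma \ref{lem:cutoff function lemma} multiplied onto a locally Lipschitz approximating sequence, with Lemma \ref{lem:choosing approximating function} used to upgrade $L^1_{\loc}$-convergence to $L^1(W)$-convergence --- but you perform the two operations in the wrong order, and this creates a genuine gap. You set $u_i:=\eta w_i$ and then assert that this sequence still satisfies $\int_W g_{u_i}\,d\mu\to\Vert Du\Vert(W)$, which you need as a hypothesis in order to feed $(u_i)$ into Lemma \ref{lem:choosing approximating function}. By the Leibniz rule $g_{\eta w_i}\le\eta g_{w_i}+g_\eta|w_i|$, and since $g_\eta|u|=0$ a.e. (as $g_\eta=0$ on $\supp_\Om u$ and $u=0$ a.e.\ outside it), the cross term equals $\int_W g_\eta|w_i-u|\,d\mu\le\sum_j L_j\Vert w_i-u\Vert_{L^1(W_j\setminus W_{j-1})}$, where $L_j$ is the Lipschitz constant of $\eta$ on the exhausting set $W_j$. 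For a fixed $i$ this sum is not controlled --- it can even be infinite, since $w_i\in\liploc(W)$ need not be integrable against the merely locally bounded $g_\eta$ --- and it certainly need not tend to $0$ under mere $L^1_{\loc}(W)$-convergence of $w_i$ to $u$. So $(\eta w_i)$ does not verify the hypotheses of Lemma \ref{lem:choosing approximating function}, and the step that was supposed to rescue the $L^1(W)$-convergence and the energy bound collapses.

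The fix is exactly the reversal of order carried out in the paper: first apply Lemma \ref{lem:choosing approximating function} to the raw sequence $(w_i)$, choosing the tolerances $\delta_j:=2^{-j}\eps\min\{1,L_j^{-1}\}$ tuned to the Lipschitz constants of $\eta$. This produces a single $v\in\liploc(W)$ satisfying $\int_W g_v\,d\mu\le\Vert Du\Vert(W)+\eps$ and $\Vert v-u\Vert_{L^1(W_j\setminus W_{j-1})}<2^{-j}\eps L_j^{-1}$ simultaneously for all $j$; only then does one multiply by $\eta$, and the cross term is now bounded by $\sum_j L_j\Vert v-u\Vert_{L^1(W_j\setminus W_{j-1})}<\eps$. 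Your handling of the supports, the zero extension to $\Om$, and the conclusion $\Vert Du\Vert(\Om)=\Vert Du\Vert(W)$ with $u\in\BV_0(W,\Om)$ all match the paper and are fine.
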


\begin{proof}
	Fix $\eps>0$.
	By Lemma \ref{lem:cutoff function lemma} we find a function $\eta\in \liploc(\Om)$ such that $0\le \eta\le 1$,
	$\eta=1$ on $\supp_{\Om} u$, and $\supp_{\Om} \eta$ is a subset of $W$
	and bounded if $\supp_{\Om} u$ is.
	Take open sets $W_1\Subset W_2\Subset \ldots \Subset \bigcup_{j=1}^{\infty}W_j=W$ and $W_0:=\emptyset$.
	Denote by $L_j>0$ the Lipschitz constant of $\eta$ in $W_{j}$.
	By Lemma \ref{lem:choosing approximating function} we find a function
	$v\in \liploc(W)$ such that $\int_W g_v\,d\mu\le \Vert Du\Vert(W)+\eps$ and
	$\Vert v-u\Vert_{L^1(W_{j}\setminus W_{j-1})}<2^{-j}\eps\min\{1,L_j^{-1}\}$ for every $j\in\N$.
	Then
	\[
	\Vert \eta v-u\Vert_{L^1(W)}=\Vert \eta v-\eta u\Vert_{L^1(W)}
	\le \Vert v-u\Vert_{L^1(W)}<\eps.
	\]
	Moreover, $g_{\eta}=0$ on $\supp_{\Om} u$ by \cite[Corollary 2.21]{BB}, and then by the Leibniz rule \cite[Theorem 2.15]{BB},
	\begin{align*}
	\int_W g_{\eta v}\,d\mu
	&\le \int_{W}g_{v}\,d\mu+g_{\eta}|v|\,d\mu\\
	&= \int_{W}g_{v}\,d\mu+g_{\eta}|v-u|\,d\mu\\
	&= \int_{W}g_{v}\,d\mu+\sum_{j=1}^{\infty}\int_{W_j\setminus W_{j-1}}g_{\eta}|v-u|\,d\mu\\
	&\le \int_{W}g_{v}\,d\mu+\sum_{j=1}^{\infty}L_j\Vert v-u\Vert_{L^1(W_{j}\setminus W_{j-1})}\\
	&\le \Vert Du\Vert(W)+2\eps.
	\end{align*}
	We find the desired functions by letting $u_i:=\eta v$ with the choices $\eps=1/i$.
	To prove the second claim,
	denote by $u,u_i$ also the zero extensions of these functions
	to $\Om\setminus W$.
	Obviously $u_i\to u$ in $L^1(\Om)$.
	Note that the minimal $1$-weak upper gradient $g_{u_i}$
	(now as a function defined on $\Om$)
	is clearly the zero extension of $g_{u_i}$ (as a function defined only on $W$),
	and so we have
	\[
	\Vert Du\Vert(\Om)\le \liminf_{i\to\infty}\int_\Om g_{u_i}\,d\mu=\liminf_{i\to\infty}\int_{W} g_{u_i}\,d\mu=\Vert Du\Vert(W).
	\]
	Thus $u\in\BV(\Om)$ and then clearly $u\in\BV_0(W,\Om)$.
\end{proof}

Now we can show that Lipschitz functions with zero boundary values
are dense in the class $\BV_0(W,\Om)$ in the following weak sense. 

\begin{proposition}\label{prop:weak density of lipschitz functions}
	Let $W\subset \Om\subset X$ be open sets and let $u\in\BV_0(W,\Om)$. Then there exists a sequence
	$(u_i)\subset \liploc(\Om)$ such that each $\supp_{\Om}u_i\subset W$ is bounded,
	$u_i\to u$ in $L^1(\Om)$, and
	\[
	\lim_{i\to\infty}\int_{\Om}g_{u_i}\,d\mu=\Vert Du\Vert(\Om).
	\]
\end{proposition}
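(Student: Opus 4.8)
The plan is to combine Theorem~\ref{thm:characterization of BV function with zero bdry values} with Lemma~\ref{lem:extension of compactly supported function} and a diagonalization. Since $u\in\BV_0(W,\Om)$, Theorem~\ref{thm:characterization of BV function with zero bdry values} gives a sequence $(v_k)\subset\BV(\Om)$ with each $\supp_\Om v_k$ a bounded subset of $W$ and $v_k\to u$ in $\BV(\Om)$, hence in particular $v_k\to u$ in $L^1(\Om)$ and $\Vert Dv_k\Vert(\Om)\to\Vert Du\Vert(\Om)$. Each $v_k$ restricted to $W$ lies in $\BV(W)$ with $\supp_\Om v_k\subset W$, so by Lemma~\ref{lem:extension of compactly supported function} there is a sequence $(v_{k,i})_i\subset\liploc(W)$ with $\supp_\Om v_{k,i}\subset W$ bounded, $v_{k,i}\to v_k$ in $L^1(W)$ as $i\to\infty$, and $\int_W g_{v_{k,i}}\,d\mu\to\Vert Dv_k\Vert(W)$; moreover the same lemma tells us $\Vert Dv_k\Vert(W)=\Vert Dv_k\Vert(\Om)$ under zero extension, and $v_{k,i}$ (zero-extended) is in $\liploc(\Om)$ with $\int_\Om g_{v_{k,i}}\,d\mu=\int_W g_{v_{k,i}}\,d\mu$.

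Next I would extract a diagonal sequence. For each $k$ choose $i(k)$ so large that $\Vert v_{k,i(k)}-v_k\Vert_{L^1(\Om)}<1/k$ and $\bigl|\int_\Om g_{v_{k,i(k)}}\,d\mu-\Vert Dv_k\Vert(\Om)\bigr|<1/k$, and set $u_k:=v_{k,i(k)}$. Then $\supp_\Om u_k\subset W$ is bounded, $u_k\in\liploc(\Om)$, and $\Vert u_k-u\Vert_{L^1(\Om)}\le\Vert u_k-v_k\Vert_{L^1(\Om)}+\Vert v_k-u\Vert_{L^1(\Om)}\to 0$. For the energy, lower semicontinuity of the total variation (applied to the open set $\Om$, using \eqref{eq:Sobolev subclass BV}) gives $\Vert Du\Vert(\Om)\le\liminf_{k\to\infty}\int_\Om g_{u_k}\,d\mu$, while the upper bound follows from $\int_\Om g_{u_k}\,d\mu\le\Vert Dv_k\Vert(\Om)+1/k\to\Vert Du\Vert(\Om)$. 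Combining, $\lim_{k\to\infty}\int_\Om g_{u_k}\,d\mu=\Vert Du\Vert(\Om)$, which is exactly what is required after relabeling the index.

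The only point requiring care is the matching of "support in $W$" hypotheses across the two cited results: Theorem~\ref{thm:characterization of BV function with zero bdry values} produces $v_k$ with $\supp_\Om v_k$ bounded and contained in $W$, which is precisely the hypothesis $\supp_\Om u\subset W$ needed to apply Lemma~\ref{lem:extension of compactly supported function} to $v_k|_W$, and that lemma in turn guarantees the approximants keep bounded support inside $W$ and that passing to zero extensions does not change the upper gradient integrals or the total variation. So there is no real obstacle; the argument is a two-step approximation (BV functions of bounded support in $W$, then locally Lipschitz such functions) glued together by a diagonal choice, with lower semicontinuity supplying the matching lower bound on the energies.
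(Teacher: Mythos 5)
Your proposal is correct and follows essentially the same route as the paper: Theorem \ref{thm:characterization of BV function with zero bdry values} to get $\BV$ approximants with bounded support in $W$, then Lemma \ref{lem:extension of compactly supported function} applied to each, glued by a diagonal choice. The only cosmetic difference is that you invoke lower semicontinuity for the lower energy bound, which is redundant since the two-sided estimate $\bigl|\int_\Om g_{u_k}\,d\mu-\Vert Dv_k\Vert(\Om)\bigr|<1/k$ together with $\Vert Dv_k\Vert(\Om)\to\Vert Du\Vert(\Om)$ already gives the limit.
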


\begin{proof}
	By Theorem \ref{thm:characterization of BV function with zero bdry values},
	we find a sequence $(v_i)\subset\BV(\Om)$ such that
	$\supp_{\Om} v_i\subset W$ are bounded and
	$\Vert v_i-u\Vert_{\BV(\Om)}<1/i$ for each $i\in\N$.
	Then by Lemma \ref{lem:extension of compactly supported function}, for each $i\in\N$ we find $u_i\in\liploc(\Om)$ such that
	$\supp_{\Om} u_i\subset W$ is
	bounded,
	$\Vert u_i-v_i\Vert_{L^1(\Om)}<1/i$, and
	\[
	\left|\int_{\Om} g_{u_i}\,d\mu- \Vert Dv_i\Vert(\Om)\right|< 1/i.
	\]
	We conclude that $\Vert u_i-u\Vert_{L^1(\Om)}<2/i$ and
	\[
	\left|\int_{\Om} g_{u_i}\,d\mu- \Vert Du\Vert(\Om)\right|< 2/i.
	\]
\end{proof}

The analog of the following result is well
known for Newton-Sobolev functions, see \cite[Lemma 2.37]{BB},
and so it is natural to prove it here for the class $\BV_0(W,\Om)$,
even though we will not need this result later.

\begin{proposition}\label{prop:uvw}
	Let $W\subset \Om\subset X$ be open sets and let  $u\in \BV(W)$ and $v,w\in \BV_0(W,\Om)$ such that
	$v\le u\le w$ in $\Om$.
	Then $u\in\BV_0(W,\Om)$.
\end{proposition}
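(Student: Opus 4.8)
The plan is to reduce to the zero-extension characterization of $\BV_0(W,\Om)$ and use truncations together with the lattice inequality \eqref{eq:variation of min and max}. First I would record the following: since $v,w\in\BV_0(W,\Om)$, by Theorem \ref{thm:characterization of BV function with zero bdry values} there exist sequences $(v_k),(w_k)\subset\BV(\Om)$ with $\supp_\Om v_k,\supp_\Om w_k$ bounded subsets of $W$ and $v_k\to v$, $w_k\to w$ in $\BV(\Om)$. The goal is to produce a similar approximating sequence for $u$, i.e. functions in $\BV(\Om)$ whose support is a bounded subset of $W$ and which converge to $u$ (the zero extension) in $\BV(\Om)$; then Theorem \ref{thm:characterization of BV function with zero bdry values} gives $u\in\BV_0(W,\Om)$. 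Note that from $v\le u\le w$ on $\Om$ together with $v^\wedge=v^\vee=0$ and $w^\wedge=w^\vee=0$ $\mathcal H$-a.e.\ on $\Om\setminus W$, one already gets $u^\wedge=u^\vee=0$ $\mathcal H$-a.e.\ on $\Om\setminus W$ (squeezing the approximate limits); the only real content is that the zero extension of $u$ lies in $\BV(\Om)$, equivalently $\Vert Du\Vert(\Om)<\infty$ for that extension.

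Next I would build the truncated approximants. Fix a cutoff-type construction: for each $k$, set
\[
u_k:=\max\{\min\{u,w_k\},v_k\}
\]
on $\Om$ (interpreting $u$ as any $L^1_{\loc}(\Om)$ representative that equals the given function on $W$; outside $W$ we just need $u_k$ to make sense, and here $v_k\le w_k$ so the formula is well defined on all of $\Om$). On $W$ we have $u_k\to u$ in $L^1_{\loc}$ since $v_k\to v\le u$ and $w_k\to w\ge u$ there, squeezing $u_k$ between $\min\{u,w_k\}\to u$ and $v_k\to v\le u$... more carefully, $v_k\le u_k\le \max\{u,v_k\}$ on $W$ when $u\le w_k$ fails, so one wants $L^1$ convergence which follows from $v_k\to v$, $w_k\to w$ and dominated convergence after passing to a subsequence with a.e.\ convergence. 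Outside $W$, $v_k$ and $w_k$ are supported in bounded subsets of $W$, so $u_k=\max\{\min\{u,0\},0\}$-type expressions vanish outside the union of those supports; the key point is $\supp_\Om u_k\subset \supp_\Om v_k\cup\supp_\Om w_k\cup\supp_\Om u$, which is a bounded subset of $W$ provided $\supp_\Om u$ is — but $u$ need not have bounded support, so I would instead first replace $u$ by $\eta_j u$ using the cutoffs of Lemma \ref{lem:cutoff function lemma} and a diagonal argument, or argue that the bounded-support requirement in Theorem \ref{thm:characterization of BV function with zero bdry values} for $u$ itself can be met since $\min\{u,w_k\}\le w_k$ and $\max\{\cdot,v_k\}\ge v_k$ force $u_k$ to be supported where $v_k$ or $w_k$ are nonzero only once we also know $u$ is small there. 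The cleanest route: apply \eqref{eq:variation of min and max} twice to get
\[
\Vert Du_k\Vert(\Om)\le \Vert D\min\{u,w_k\}\Vert(\Om)+\Vert Dv_k\Vert(\Om)\le \Vert Du\Vert(W)+\Vert Dw_k\Vert(\Om)+\Vert Dv_k\Vert(\Om),
\]
where $\Vert Du\Vert(W)<\infty$ is given; this bounds $\Vert Du_k\Vert(\Om)$ uniformly, hence (with $L^1$ convergence) $u_k$ has a subsequence converging to its limit in $\BV(\Om)$-norm isn't automatic, but lower semicontinuity plus a more careful estimate giving $\limsup_k\Vert Du_k\Vert(\Om)\le\Vert Du\Vert(\Om)$ (the zero extension) would suffice.

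The main obstacle, and the place I would concentrate the work, is exactly this last point: showing that the zero extension of $u$ has finite (indeed the right) total variation over $\Om$, controlled by $v$ and $w$ rather than blowing up near $\partial W$. The honest way is to avoid estimating $\Vert D\min\{u,w_k\}\Vert(\Om)$ by something involving $\Vert Du\Vert(W)$ globally (since near $\bdy W$ inside $\Om$ the function $u$ lives only on $W$), and instead use that $\min\{u,w_k\}$ on $\Om$ can be written as $\min\{u\vee v_k,w_k\}$ or similar so that all the operations take place among $\BV(\Om)$ functions; concretely, set $\tilde u_k:=\max\{v_k,\min\{w_k,u\}\}$ but interpret $\min\{w_k,u\}$ as $\min\{w_k,u^*\}$ where $u^*$ is $u$ on $W$ and $+\infty$ on $\Om\setminus W$ — not a $\BV$ function — so this needs care. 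I expect the correct argument is: since $v\le u\le w$, for each $t\in\R$ we have $\{v>t\}\subset\{u>t\}\subset\{w>t\}$ as subsets of $W$, and $\{v>t\},\{w>t\}$ are sets of finite perimeter in $\Om$ with $P(\{v>t\},\Om\setminus W)=0=P(\{w>t\},\Om\setminus W)$ for a.e.\ $t$ (from $v,w\in\BV_0$), which forces, via \eqref{eq:coarea} and a perimeter comparison for nested sets, that $\{u>t\}$ extended by a set between $\{v>t\}$ and $\{w>t\}$ outside $W$ has finite perimeter in $\Om$ with the $\mathcal H$-null boundary condition, and then \eqref{eq:coarea} reassembles $u$. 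This level-set/coarea route is what I would write out in detail; the lattice inequality then packages the perimeter bounds, and Theorem \ref{thm:characterization of BV function with zero bdry values} closes the argument.
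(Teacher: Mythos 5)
Your proposal assembles the right ingredients (truncation against the approximants $w_k$ of Theorem \ref{thm:characterization of BV function with zero bdry values}, the lattice inequality \eqref{eq:variation of min and max}, squeezing the approximate limits on $\Om\setminus W$), and you correctly isolate the one real difficulty: bounding the total variation of the truncated function over $\Om$ without invoking the as-yet-unknown quantity $\Vert Du_0\Vert(\Om)$ for the zero extension $u_0$. But you never close that step. Your ``cleanest route'' estimate $\Vert D\min\{u,w_k\}\Vert(\Om)\le \Vert Du\Vert(W)+\Vert Dw_k\Vert(\Om)$ is exactly the illegitimate move you then disavow (the lattice inequality over $\Om$ only gives the useless bound $\Vert Du_0\Vert(\Om)+\Vert Dw_k\Vert(\Om)$, with the first term a priori infinite), and the fallback level-set/coarea sketch is not carried out --- and would in any case face the identical difficulty for each level set, since ``$F\subset E\subset G$ with $F,G$ of finite perimeter in $\Om$ and $E$ of finite perimeter in $W$ implies $E$ has finite perimeter in $\Om$'' is just the proposition again for characteristic functions, not something the coarea formula hands you for free.

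The missing idea is Lemma \ref{lem:extension of compactly supported function}. First reduce to $v\equiv 0$ (replace $u$ by $u-v$), take the $w_k$ nonnegative with $\supp_\Om w_k$ a bounded subset of $W$, and set $\varphi_k:=\min\{w_k,u_0\}$. Since both functions are nonnegative, $\supp_\Om\varphi_k\subset\supp_\Om w_k$ --- so your worry about $\supp_\Om u$ being unbounded evaporates; there is no need for extra cutoffs $\eta_j$ or a diagonal argument. The lattice inequality applied \emph{in $W$}, where both $u$ and $w_k$ are genuine $\BV(W)$ functions, gives $\Vert D\varphi_k\Vert(W)\le\Vert Dw_k\Vert(W)+\Vert Du\Vert(W)<\infty$, and then Lemma \ref{lem:extension of compactly supported function} upgrades this to $\varphi_k\in\BV_0(W,\Om)$ with $\Vert D\varphi_k\Vert(\Om)=\Vert D\varphi_k\Vert(W)$. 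Since $\varphi_k\to\min\{w,u_0\}=u_0$ in $L^1(\Om)$ with $\liminf_k\Vert D\varphi_k\Vert(\Om)\le\Vert Dw\Vert(W)+\Vert Du\Vert(W)$, lower semicontinuity yields $u_0\in\BV(\Om)$, and the squeeze $0\le u_0^\wedge\le u_0^\vee\le w^\vee=0$ $\mathcal H$-a.e.\ on $\Om\setminus W$ finishes the proof. Without the passage through $\BV(W)$ plus the compact-support extension lemma, the finiteness of $\Vert Du_0\Vert(\Om)$ is never established, so as written the argument has a genuine gap.
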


\begin{proof}
	By observing that $u\in \BV_0(W,\Om)$ if and only if $u-v\in \BV_0(W,\Om)$,
	we can assume that $v\equiv 0$.
	Denote the zero extension of $u$ to $\Om\setminus W$ by $u_0$. 
	By Theorem \ref{thm:characterization of BV function with zero bdry values},
	we find a sequence of
	nonnegative functions $(w_k)\subset\BV(\Om)$ with $\supp_{\Om}w_k\subset W$
	and $w_k\to w$ in $\BV(\Om)$ (the nonnegativity can be achieved by truncating, if needed).
	Then $\varphi_k:=\min\{w_k,u_0\}\in\BV(W)$ by
	\eqref{eq:variation of min and max} and $\varphi_k\in \BV_0(W,\Om)$ by Lemma
	\ref{lem:extension of compactly supported function}, for each $k\in\N$.
	Moreover, $\varphi_k\to u_0$ in $L^1(\Om)$ and
	\begin{align*}
	\liminf_{k\to\infty}\Vert D\varphi_k\Vert(\Om)
	&=\liminf_{k\to\infty}\Vert D\varphi_k\Vert(W)\quad\textrm{by Lemma }
	\ref{lem:extension of compactly supported function}\\
	&\le \liminf_{k\to\infty}\Vert Dw_k\Vert(W)
	+\Vert Du_0\Vert(W)\quad\textrm{by }\eqref{eq:variation of min and max}\\
	&= \Vert Dw\Vert(W)+\Vert Du\Vert(W).
	\end{align*}
	Thus by the lower semicontinuity of the total variation with respect to $L^1$-convergence, $u_0\in\BV(\Om)$. Moreover,
	$u_0^{\vee}(x)\le w^{\vee}(x)=0$ for $\mathcal H$-a.e. $x\in \Om\setminus W$,
	and obviously $u_0^{\wedge}(x)\ge 0$ for all $x\in \Om\setminus W$,
	guaranteeing that $u_0^{\wedge}=u_0^{\vee}=0$ $\mathcal H$-a.e. in $\Om\setminus W$.
\end{proof}

\section{Variational capacities}\label{sec:capacities}

In this section we study variational capacities.
Our approximation result will be based on the main result of this section,
Theorem \ref{thm:1capacity and BVcapacity are equal}.

We begin by defining the variational $1$-capacity and its Lipschitz and $\BV$ analogs.

\begin{definition}\label{def:variational capacities}
	Let $A\subset D\subset H\subset X$ be
	nonempty sets such that $H$ is $\mu$-measurable.
	We define the variational (Newton-Sobolev) $1$-capacity by
	\[
	\rcapa_1(A,D,H):=\inf \int_H g_u\,d\mu,
	\]
	where the infimum is taken over functions $u\in N^{1,1}_0(D,H)$ such that
	$u\ge 1$ on $A$.
	
	We define the variational Lipschitz $1$-capacity by
	\[
	\rcapa_{\mathrm{lip},1}(A,D,H):=\inf \int_{H}g_u\,d\mu,
	\]
	where the infimum is taken over functions 
	$u\in N^{1,1}_0(D,H)\cap \liploc(H)$
	such that $u\ge 1$ on $A$.
	
	Finally, we define the variational $\BV$-capacity by
	\[
	\rcapa_{\BV}(A,D,H):=\inf \Vert Du\Vert(H),
	\]
	where the infimum is taken over functions $u\in L^1(H)$ such that
	$u^{\wedge}=u^{\vee}= 0$ $\mathcal H$-a.e. on $H\setminus D$ and
	$u^{\wedge}\ge 1$ $\mathcal H$-a.e. on $A$.
	
	If $H=X$, we omit it from the notation.
	In each case, we say that the functions $u$ over which we take the infimum are admissible (test) functions
	for the capacity in question.
\end{definition}

Again, $g_u$ always denotes the minimal $1$-weak upper gradient of $u$ (in $H$).
Recall that we understand Newton-Sobolev functions to be defined at every point, but in the definition of $\rcapa_1(A,D,H)$ we can equivalently
require $u\ge 1$ $1$-q.e. on $A$,
by \eqref{eq:quasieverywhere equivalence classes}.
However, the same is not true for $\rcapa_{\mathrm{lip},1}(A,D,H)$.
In each definition, we see by truncation that it is enough to consider test functions $0\le u\le 1$,
and then the conditions $u\ge 1$ and $u^{\wedge}\ge 1$ are replaced by $u= 1$ and $u^{\wedge}= 1$, respectively.

In the definition of the variational $\BV$-capacity, it is implicitly understood
that the test functions need to satisfy $\Vert Du\Vert(\Om)<\infty$ for some open
$\Om\supset H$.
Note that if $H$ is itself open, then the infimum is taken over functions
$u\in\BV_0(D,H)$ such that
$u^{\wedge}\ge 1$ $\mathcal H$-a.e. on $A$.

Using \eqref{eq:Sobolev subclass BV},
\eqref{eq:Newtonian zero class contained in BV zero},
and Proposition \ref{prop:Lebesgue points for Sobolev functions open set},
it is straightforward to see that for open $\Om\subset X$,
\begin{equation}\label{eq:general capacity inequalities}
\rcapa_{\BV}(A,D,\Om)\le\rcapa_{1}(A,D,\Om)\le\rcapa_{\lip,1}(A,D,\Om).
\end{equation}
In \cite[Theorem 4.23]{L-ZB} it was shown
that for a compact subset $A$ of an open set $D$, we have
\[
\rcapa_{\BV}(A,D)=\rcapa_{1}(A,D)=\rcapa_{\lip,1}(A,D).
\]
For more general $A,D$, if there is no positive distance between $A$ and
$X\setminus D$, then of course $\rcapa_{\lip,1}(A,D)=\infty$, while the other
capacities may be finite. However, it is natural to expect the
equality $\rcapa_{\BV}(A,D)=\rcapa_{1}(A,D)$ to hold more generally.
We show this in Theorem \ref{thm:1capacity and BVcapacity are equal} below.

\begin{lemma}[{\cite[Lemma 3.4]{L-LSC}}]\label{lem:capacity and Newtonian function and bigger set}
	Let $G\subset X$ and $\eps>0$. Then there exists an open set $V\supset G$ with $\capa_1(V)\le C_1(\capa_1(G)+\eps)$ and a
	function $\eta\in N^{1,1}_0(V)$ with $0\le\eta\le 1$ on $X$, $\eta=1$ on $G$, and $\Vert \eta\Vert_{N^{1,1}(X)}\le C_1(\capa_1(G)+\eps)$,
	for some constant $C_1=C_1(C_d,C_P,\lambda)\ge 1$.
\end{lemma}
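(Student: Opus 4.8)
The plan is to build $\eta$ from a single near-optimal test function for $\capa_1(G)$ and to produce $V$ from a superlevel set of that function by outer regularity. We may assume $\capa_1(G)<\infty$, since otherwise there is nothing to prove. First I would use the definition of $\capa_1(G)$ to pick $u\in N^{1,1}(X)$ with $u\ge 1$ on $G$ and $\Vert u\Vert_{N^{1,1}(X)}<\capa_1(G)+\eps$; replacing $u$ by $\min\{\max\{u,0\},1\}$, which does not increase $\Vert\cdot\Vert_{N^{1,1}(X)}$ since $g_u$ remains a $1$-weak upper gradient of the truncation (see e.g.\ \cite{BB}), we may further assume $0\le u\le 1$ and $u=1$ on $G$.

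The key step is to set $\eta:=\max\{2u-1,0\}$. Writing $\eta=\varphi\circ u$ for the $2$-Lipschitz function $\varphi(t)=\max\{2t-1,0\}$ with $\varphi(0)=0$, the chain rule for minimal $1$-weak upper gradients gives that $2g_u$ is a $1$-weak upper gradient of $\eta$; and from $0\le u\le 1$ one checks the pointwise bound $0\le\eta\le u$, so $\eta\in N^{1,1}(X)$ with $\Vert\eta\Vert_{N^{1,1}(X)}\le 2\Vert u\Vert_{N^{1,1}(X)}<2(\capa_1(G)+\eps)$. Moreover $\eta=1$ on $\{u=1\}\supset G$ and $\eta=0$ on $\{u\le 1/2\}$, so that $\{\eta\neq 0\}\subset\{u\ge 1/2\}$.

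It remains to choose $V$. Since $2u$ is an admissible function for $\capa_1(\{u\ge 1/2\})$, we have $\capa_1(\{u\ge 1/2\})\le 2\Vert u\Vert_{N^{1,1}(X)}$, and because $\capa_1$ is an outer capacity (see \cite[Theorem 5.31]{BB}) we may pick an open set $V\supset\{u\ge 1/2\}$ with $\capa_1(V)<\capa_1(\{u\ge 1/2\})+\eps\le 2\Vert u\Vert_{N^{1,1}(X)}+\eps<3(\capa_1(G)+\eps)$. Then $G\subset\{u=1\}\subset\{u\ge 1/2\}\subset V$ with $V$ open, while $\{\eta\neq 0\}\subset\{u\ge 1/2\}\subset V$ forces $\eta=0$ on $X\setminus V$, hence $\eta\in N^{1,1}_0(V)$. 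Taking $C_1:=3$ (or any larger constant, in particular one depending only on $C_d,C_P,\lambda$) then yields all the asserted bounds.

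I do not expect a serious obstacle here. The one point that needs care is that the natural ``support'' of $\eta$, namely the superlevel set $\{u\ge 1/2\}$, is in general only quasiopen and cannot itself serve as the open set $V$; this is circumvented by replacing it with an open superset of controlled $1$-capacity via outer regularity, which is harmless precisely because enlarging $V$ only enlarges the region in which $\eta$ is permitted to be nonzero. (No Poincar\'e inequality or doubling is actually needed beyond what already underlies the basic properties of $\capa_1$ recalled in Section \ref{sec:preliminaries}.)
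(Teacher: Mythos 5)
Your argument is correct, and it is worth noting that the paper itself gives no proof of this statement --- it is quoted from \cite{L-LSC}, so there is nothing in the present text to compare against line by line. Your route is self-contained and elementary: the only nontrivial inputs are the definition of $\capa_1$, the stability of the $N^{1,1}$-norm under truncation and post-composition with Lipschitz functions vanishing at $0$, and the outer regularity of $\capa_1$ (\cite[Theorem 5.31]{BB}, quoted in Section \ref{sec:preliminaries}), which you use twice --- once implicitly through the admissibility of $2u$ for $\capa_1(\{u\ge 1/2\})$ and once to replace the merely quasiopen superlevel set by an open $V$ of comparable capacity. All the estimates check out: $\eta=\max\{2u-1,0\}$ satisfies $0\le\eta\le u$ and $g_\eta\le 2g_u$, so $\Vert\eta\Vert_{N^{1,1}(X)}\le 2\Vert u\Vert_{N^{1,1}(X)}$; $\eta=1$ on $G$ pointwise because the admissible $u$ satisfies $u\ge1$ everywhere on $G$; and $\{\eta\neq0\}=\{u>1/2\}\subset V$ gives $\eta\in N_0^{1,1}(V)$ in the sense used in the paper. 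Your chain of inequalities indeed closes with the explicit universal constant $C_1=3$, which is in fact sharper than the statement requires: the lemma only promises a constant depending on $C_d$, $C_P$, $\lambda$, which suggests the original proof in \cite{L-LSC} passes through structural machinery (e.g.\ maximal-function or boxing-inequality estimates) rather than the second application of outer regularity that you use. The one caveat to your closing parenthetical is that outer regularity of $\capa_1$ itself rests on quasicontinuity and hence on the standing doubling and Poincar\'e assumptions, so those hypotheses are still being used, just indirectly --- but you acknowledge exactly this.
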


\begin{lemma}[{\cite[Lemma 3.8]{L-SA}}]\label{lem:variation measure and capacity}
	Let $\Omega\subset X$ be an open set and
	let $u\in L^1_{\loc}(\Omega)$ with $\Vert Du\Vert(\Omega)<\infty$. Then for every
	$\eps>0$ there exists $\delta>0$ such that if $A\subset \Omega$ with $\capa_1 (A)<\delta$,
	then $\Vert Du\Vert(A)<\eps$.
\end{lemma}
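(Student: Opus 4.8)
The plan is to argue by contradiction and to reduce the statement to the two facts already at our disposal: that $\capa_1$ and the codimension one Hausdorff measure $\mathcal H$ have the same null sets, see \eqref{eq:null sets of Hausdorff measure and capacity}, and that $\Vert Du\Vert\ll\mathcal H$ on $\Om$, see \eqref{eq:absolute continuity of var measure wrt H}. So I would suppose the claim fails for some fixed $\eps>0$: then for each $j\in\N$ there is a set $A_j\subset\Om$ with $\capa_1(A_j)<2^{-j}$ but $\Vert Du\Vert(A_j)\ge\eps$.

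The first step is to replace each $A_j$ by an open subset of $\Om$ without losing either property. Since $\capa_1$ is an outer capacity, there is an open set $U_j\supset A_j$ with $\capa_1(U_j)<2^{-j}$, and then $V_j:=U_j\cap\Om$ is open with $A_j\subset V_j\subset\Om$, $\capa_1(V_j)\le\capa_1(U_j)<2^{-j}$, and $\Vert Du\Vert(V_j)\ge\Vert Du\Vert(A_j)\ge\eps$ (the last inequality straight from the definition of $\Vert Du\Vert(A_j)$ as an infimum over open supersets, $V_j$ being one of them).

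The second step is a ``$\limsup$ set'' argument. I would put $B_n:=\bigcup_{j\ge n}V_j$, a decreasing sequence of open subsets of $\Om$, and $B:=\bigcap_{n}B_n$, a Borel ($G_\delta$) subset of $\Om$. Countable subadditivity of $\capa_1$ gives $\capa_1(B)\le\capa_1(B_n)\le\sum_{j\ge n}2^{-j}=2^{1-n}$ for every $n$, hence $\capa_1(B)=0$; then $\mathcal H(B)=0$ by \eqref{eq:null sets of Hausdorff measure and capacity}, and therefore $\Vert Du\Vert(B)=0$ by \eqref{eq:absolute continuity of var measure wrt H}. On the other hand $\Vert Du\Vert(B_n)\ge\Vert Du\Vert(V_n)\ge\eps$ for every $n$, and since $\Vert Du\Vert$ restricted to $\Om$ is a finite Radon measure (because $\Vert Du\Vert(\Om)<\infty$, cf. \cite[Theorem 3.4]{M}), continuity from above along $B_n\downarrow B$ yields $\Vert Du\Vert(B)=\lim_{n\to\infty}\Vert Du\Vert(B_n)\ge\eps>0$, a contradiction.

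There is no deep step here; the points needing a little care — the closest thing to an obstacle — are bookkeeping ones: that $\Vert Du\Vert$ really is a \emph{finite} Radon measure on $\Om$, so that continuity from above applies to the decreasing family $B_n$; that passing from $A_j$ to the open set $V_j$ costs nothing, which uses the outer-ness of $\capa_1$ together with the defining infimum for $\Vert Du\Vert$ on arbitrary sets; and the countable subadditivity of $\capa_1$, which follows by taking $u:=\sup_j u_j$ for near-optimal admissible functions for the $A_j$. A more constructive alternative would be to use Lemma \ref{lem:capacity and Newtonian function and bigger set} to enclose a given $A$ with $\capa_1(A)<\delta$ in an open set of comparably small capacity and estimate $\Vert Du\Vert$ on that open set directly, but the contradiction route above looks shorter.
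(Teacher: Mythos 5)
Your argument is correct. Note that the paper itself gives no proof of this lemma --- it is quoted verbatim from \cite[Lemma 3.8]{L-SA} --- so there is nothing internal to compare against; what you have produced is a self-contained derivation from facts the paper does record, namely the outer regularity of $\capa_1$, the coincidence of $\capa_1$-null and $\mathcal H$-null sets \eqref{eq:null sets of Hausdorff measure and capacity}, and the absolute continuity $\Vert Du\Vert\ll\mathcal H$ \eqref{eq:absolute continuity of var measure wrt H}. Each step checks out: passing from $A_j$ to the open set $V_j=U_j\cap\Om$ preserves both the capacity bound (monotonicity) and the lower bound $\Vert Du\Vert(V_j)\ge\Vert Du\Vert(A_j)\ge\eps$ (since $\Vert Du\Vert(A_j)$ is by definition an infimum over open supersets and $V_j$ is one); the countable subadditivity of $\capa_1$ is standard (\cite[Theorem 3.3]{BB}) and forces $\capa_1(B)=0$ for the $G_\delta$ set $B=\bigcap_n\bigcup_{j\ge n}V_j$, hence $\Vert Du\Vert(B)=0$; and since $\Vert Du\Vert(\Om)<\infty$ makes $\Vert Du\Vert$ a finite Radon measure on $\Om$ by \cite[Theorem 3.4]{M}, continuity from above along the decreasing open sets $B_n$ gives $\Vert Du\Vert(B)\ge\eps$, the desired contradiction. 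The one point worth spelling out if you write this up in full is the subadditivity of $\capa_1$, which you correctly sketch via suprema of near-optimal admissible functions; everything else is bookkeeping, exactly as you say.
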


\begin{theorem}\label{thm:1capacity and BVcapacity are equal}
Let $D\subset \Om'\subset X$ be $1$-quasiopen sets and let
$A\subset D$ be $1$-quasiclosed with respect to $\Om'$.
Then $\rcapa_1(A,D,\Om')\le \rcapa_{\BV}(A,D,\Om')$.
\end{theorem}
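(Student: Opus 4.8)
The plan is to start from a near-optimal test function $u$ for $\rcapa_{\BV}(A,D,\Om')$, i.e. $u\in L^1(\Om')$ with $\Vert Du\Vert(\Om')<\infty$, $u^{\wedge}=u^{\vee}=0$ $\mathcal H$-a.e. on $\Om'\setminus D$ and $u^{\wedge}\ge 1$ $\mathcal H$-a.e. on $A$, with $\Vert Du\Vert(\Om')<\rcapa_{\BV}(A,D,\Om')+\eps$. By truncation we may assume $0\le u\le 1$. The goal is to manufacture from $u$ a \emph{Newton--Sobolev} test function for $\rcapa_1(A,D,\Om')$ with comparable (indeed, only slightly larger) energy. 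The natural idea is to use the coarea formula \eqref{eq:coarea}: for a.e. level $t\in(0,1)$ the superlevel set $E_t:=\{u^\vee>t\}$ (or $\{u>t\}$, which agrees up to $\mathcal H$-null sets where it matters, by the fine properties of $u$) has finite perimeter in $\Om'$, and $\int_0^1 P(E_t,\Om')\,dt=\Vert Du\Vert(\Om')$. So for a suitable $t$ we get $P(E_t,\Om')\le \Vert Du\Vert(\Om')+\eps$, and $\ch_{E_t}$ is (essentially) a $\BV_0$ test function for $\rcapa_{\BV}$ as well, since $u^\wedge\ge 1$ $\mathcal H$-a.e. on $A$ forces $A\subset I_{E_t}$ up to $\mathcal H$-null sets, and $u^\vee=0$ $\mathcal H$-a.e. on $\Om'\setminus D$ forces $\partial^*E_t\cap(\Om'\setminus D)$ and $E_t\cap(\Om'\setminus D)$ to be $\mathcal H$-null. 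Thus it suffices to treat the case where the test function is the characteristic function of a set of finite perimeter.

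Given $E=E_t$, the core of the argument is to pass from the perimeter $P(E,\Om')$ to the Newton--Sobolev capacity. Here the key available tool is Theorem \ref{thm:characterization of total variational}: since $D$ (hence a set slightly smaller) is $1$-quasiopen, the total variation of a $\BV$ function on a quasiopen set can be computed via Newton--Sobolev approximations $w_i\to \ch_E$ in $L^1_{\loc}$ with $\int g_{w_i}\to P(E,D)$. However, these $w_i$ do not automatically have zero boundary values on $D$, and they need not satisfy $w_i\ge 1$ on $A$. To fix the boundary-value issue I would exploit $1$-quasiclosedness of $A$ in $\Om'$ and $1$-quasiopenness of $D$: choose an open set $G$ with $\capa_1(G)$ tiny so that $A\setminus G$ is relatively closed in $\Om'$ and $D\cup G$ is relatively open; using Lemma \ref{lem:capacity and Newtonian function and bigger set} absorb $G$ into a Newton--Sobolev cutoff $\eta$ of small norm, set $\eta=1$ on $G$, and then work with $\min\{w_i,1\}\cdot\chi$-type modifications, or rather with $\max$-truncations, pushing the approximants to genuinely vanish outside $D$ and to be $\ge 1$ on $A$ after adding $\eta$. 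The $\mathcal H$-negligible ``defect'' of the quasiopen/quasiclosed sets is harmless because of \eqref{eq:null sets of Hausdorff measure and capacity} and Lemma \ref{lem:stability of quasiopen sets}. Lemma \ref{lem:variation measure and capacity} will be used to control $\Vert D\ch_E\Vert$ and the energies on the small bad set $G$, keeping all error terms $O(\eps)$.

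The main obstacle I anticipate is the bookkeeping needed to convert the \emph{non-admissible} Newton--Sobolev approximants $w_i$ (given only by the abstract characterization of total variation on the quasiopen set $D$, so with no control at the boundary of $D$ and no lower bound on $A$) into \emph{admissible} ones for $\rcapa_1(A,D,\Om')$ while only inflating the energy by $O(\eps)$. This requires combining a capacitary cutoff that kills the mass of $w_i$ near $\Om'\setminus D$ with a construction that forces $w_i\ge 1$ on (almost all of) $A$ --- for instance replacing $w_i$ by $\min\{1,\max\{w_i,\eta_A\}\}$ where $\eta_A$ is a Newton--Sobolev function that is $1$ on a quasiopen neighborhood of $A\setminus G$ relative to $D$ and has small norm, then using the Leibniz/truncation estimates \eqref{eq:variation of min and max}, \eqref{eq:Sobolev subclass BV} and Lemma \ref{lem:variation measure and capacity} to bound the extra gradient mass. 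Once admissibility is achieved and the energies are shown to converge to $P(E,\Om')\le\Vert Du\Vert(\Om')+\eps$, taking $\eps\to 0$ gives $\rcapa_1(A,D,\Om')\le\rcapa_{\BV}(A,D,\Om')$, as claimed (the reverse inequality being the easy direction \eqref{eq:general capacity inequalities} when everything is open, and not needed here).
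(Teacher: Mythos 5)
There is a genuine gap at the heart of your construction: the device you propose for forcing the Newton--Sobolev approximants to equal $1$ on $A$ cannot work. You replace $w_i$ by $\min\{1,\max\{w_i,\eta_A\}\}$ where $\eta_A\in N^{1,1}(X)$ equals $1$ on a quasiopen neighbourhood of $A\setminus G$ \emph{and has small norm}. Any such $\eta_A$ is admissible for $\capa_1(A\setminus G)$, so $\Vert\eta_A\Vert_{N^{1,1}(X)}\ge\capa_1(A\setminus G)\ge\capa_1(A)-\capa_1(G)$, which is in general a fixed positive quantity --- $A$ typically has large capacity, and estimating exactly this kind of quantity is the content of the theorem. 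So no small-norm $\eta_A$ exists, and the extra gradient mass of $\max\{w_i,\eta_A\}$ is not $O(\eps)$. The paper resolves this differently: after rescaling, the $\BV$ test function $u$ satisfies $u^{\wedge}=1$ on a $1$-quasiopen ``plateau'' $U$ with $A\subset U\subset D$; one takes a locally Lipschitz cutoff $\rho$ with $\rho=1$ on $A\setminus G$ and support in $U\cup G$ (Lemma \ref{lem:cutoff function lemma}) --- whose gradient is \emph{not} small --- and interpolates, $w=\rho(1-\eta)+(1-\rho)(1-\eta)v$. The dangerous term $g_\rho(1-v)$ is supported where $u=1$ a.e., so it equals $g_\rho|u-v|$ there, and Lemma \ref{lem:choosing approximating function} allows one to choose the approximant $v$ so close to $u$ on each annulus (with weights beating the local Lipschitz constants of $\rho$) that $\int g_\rho|u-v|\,d\mu<\eps$. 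Your coarea reduction to $\ch_{E_t}$ is harmless (and $I_{E_t}$ would serve as the plateau), but it does not by itself supply this interpolation mechanism, which is the missing idea.

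A second, independent omission: even with the correct interpolation, one pass of the construction only produces a function equal to $1$ on $A\setminus V$ for an open $V$ of \emph{small but positive} capacity, since the capacitary cutoff $\eta$ destroys the construction on $V$. Admissibility for $\rcapa_1(A,D,\Om')$ requires the test function to be $\ge 1$ quasieverywhere on all of $A$, so the leftover piece $A\cap\overline{V}^1$ must still be handled. The paper shows that its $\BV$-capacity is small, iterates the entire construction on it, and concludes via countable subadditivity of $\rcapa_1$ and a geometric series of errors. Your proposal stops short of this exhaustion step, which is essential to pass from ``admissible off a set of small capacity'' to ``admissible''.
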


Note that if $\Om'$ is in fact open, then by \eqref{eq:general capacity inequalities}
we have $\rcapa_1(A,D,\Om')= \rcapa_{\BV}(A,D,\Om')$.

\begin{proof}
We can assume that $\rcapa_{\BV}(A,D,\Om')<\infty$. Fix $\eps>0$.
Take an open set $\Om\supset \Om'$ and a function $h\in L^1(\Om')$ such that
$0\le h\le 1$ on $\Om$, $h^{\vee}=0$ $\mathcal H$-a.e. on $\Om'\setminus D$,
$h^{\wedge}=1$ $\mathcal H$-a.e. on $A$, and
$\Vert Dh\Vert(\Om)<\rcapa_{\BV}(A,D,\Om')+\eps$.
As $\Om'$ is $1$-quasiopen we can assume that $\mu(\Om\setminus \Om')<\infty$,
and then $h\in L^1(\Om)$ and so $h\in \BV(\Om)$.
It follows from Proposition \ref{prop:quasisemicontinuity} that the super-level sets of $h^{\wedge}$ are $1$-quasiopen, and so using also
Lemma \ref{lem:stability of quasiopen sets}, we conclude that the set
\[
U:=(\{x\in\Om:\,h^{\wedge}(x)>1-\eps\}\cup A)\cap D
\]
is $1$-quasiopen. Clearly $A\subset U\subset D$. Defining
\[
u:=\min\{1,(1-\eps)^{-1}h\}\in\BV(\Om)
\]
we have $u^{\vee}=0$ $\mathcal H$-a.e. on $\Om'\setminus D$,
$u^{\wedge}=1$ $\mathcal H$-a.e. on $U$,
and $\Vert Du\Vert(\Om)\le (1-\eps)^{-1}(\rcapa_{\BV}(A,D,\Om')+\eps)$.

According to Lemma \ref{lem:variation measure and capacity}, there exists $\delta\in (0,\eps)$
such that whenever $H\subset \Om$ with $\capa_1(H)<\delta$,
then $\Vert Du\Vert(H)<\eps$.
Since $A$ is $1$-quasiclosed with respect to $\Om'$,
and $\Om'$, $D$, and $U$ are $1$-quasiopen, we find an open set
$G\subset \Om$ such that $\capa_1(G)<\delta/C_1^2$, $\Om'\cup G$ is open,
$D\cup G$ is open, $U\cup G$ is open,
and $A\setminus G$ is relatively closed (in the subspace
topology of $\Om'$, and then clearly
also in that of $\Om'\cup G$). By Lemma
\ref{lem:capacity and Newtonian function and bigger set} we then find a set
$V\supset G$ and a function $\eta\in N_0^{1,1}(V)$ such that $\capa_1(V)<\delta/C_1$,
$0\le\eta\le 1$ on $X$, $\eta=1$ on $G$, and $\Vert \eta\Vert_{N^{1,1}(X)}<\delta/C_1$.

Note that $u\in  \BV_0(D\cup G,\Om'\cup G)$.
By Proposition \ref{prop:weak density of lipschitz functions} we find functions
$(v_i)\subset \liploc(\Om'\cup G)$ such that $0\le v_i\le 1$,
$\supp_{\Om'\cup G}v_i\subset D\cup G$,
$v_i\to u$ in $L^1(\Om'\cup G)$, and
\[
\lim_{i\to\infty}\int_{\Om'\cup G}g_{v_i}\,d\mu=\Vert Du\Vert(\Om'\cup G).
\]
Since $A\setminus G$ is a relatively closed (in the
subspace topology of $\Om'\cup G$) subset of the open set $U\cup G$,
by Lemma \ref{lem:cutoff function lemma} we also find
a function $\rho\in \liploc(\Om'\cup G)$ such that $0\le \rho\le 1$,
$\rho=1$ on $A\setminus G$, and $\supp_{\Om'\cup G} \rho\subset U\cup G$.
Take open sets $\Om_1\Subset \Om_2\Subset \ldots \Om'\cup G$ such that $\Om'\cup G=\bigcup_{j=1}^{\infty}\Om_j$, and $\Om_0:=\emptyset$.
Denote by $L_j$ the Lipschitz constant of $\rho$ in $\Om_j$.
By Lemma \ref{lem:choosing approximating function},
from the functions $v_i$ we can construct a new function
$v\in\liploc(\Om'\cup G)$ such that $0\le v\le 1$,
$\supp_{\Om'\cup G}v\subset D\cup G$ (this follows from
\eqref{eq:definition by means of etas}),
\begin{equation}\label{eq:L1 closeness of v and u on annuli}
\Vert v-u\Vert_{L^1(\Om_{j}\setminus\Om_{j-1})}<2^{-j}\eps L_j^{-1}\quad
\textrm{for all }j\in\N,
\end{equation}
and
\begin{equation}\label{eq:estimate of gv}
\int_{\Om'\cup G} g_{v}\,d\mu\le \Vert Du\Vert(\Om'\cup G)+\eps.
\end{equation}
Then define
\[
w:= \rho(1-\eta)+(1-\rho)(1-\eta)v.
\]
Note that $0\le w\le 1$, $w=1$ on $A\setminus V$, and  $\supp_{\Om'\cup G}w\subset D$,
so that $w$ is admissible for $\rcapa_{1}(A\setminus V,D,\Om')$.
By the Leibniz rule \cite[Theorem 2.15, Lemma 2.18]{BB}, we have in $\Om'\cup G$
\begin{align*}
g_{w}
&\le \rho g_{\eta}+(1-\rho)g_{(1-\eta)v}
+g_{\rho}(1-\eta)(1-v)\\
&\le \rho g_{\eta}+(1-\rho)(g_{\eta}v
+g_{v}(1-\eta))+g_{\rho}(1-\eta)(1-v)\\
&\le g_{\eta}+g_{\eta}
+g_{v}+g_{\rho}(1-\eta)(1-v),
\end{align*}
since $\rho$ and $v$ take values between $0$ and $1$.
Since $u=1$ a.e. on $U$ and $g_{\rho}=0$ outside $U\cup G$
(see e.g. \cite[Corollary 2.21]{BB}), we have
\begin{align*}
\int_{\Om'\cup G} g_{\rho}(1-\eta)(1-v)\,d\mu
&\le \int_{\Om'\setminus G} g_{\rho}(1-v)\,d\mu\\
&=\int_{\Om'\setminus G} g_{\rho}(u-v)\,d\mu\\
&\le \sum_{j=1}^{\infty}\int_{\Om_{j}\setminus \Om_{j-1}} g_{\rho}|u-v|\,d\mu\\
&\le \sum_{j=1}^{\infty}L_j\Vert u-v\Vert_{L^1(\Om_{j}\setminus\Om_{j-1})}\\
&<\eps\quad\textrm{by }\eqref{eq:L1 closeness of v and u on annuli}.
\end{align*}
Thus
\[
\int_{\Om'} g_{w}\,d\mu
\le 2\int_{\Om'} g_{\eta}\,d\mu
+ \int_{\Om'} g_{v}\,d\mu+\eps
\le 2 \delta/C_1+\Vert Du\Vert(\Om'\cup G)+2\eps
\]
by \eqref{eq:estimate of gv}.
Thus we have
\begin{equation}\label{eq:capacity of A minus V}
\begin{split}
\rcapa_{1}(A\setminus V,D,\Om')
\le \int_{\Om'} g_{w}\,d\mu
&\le \Vert Du\Vert(\Om'\cup G)+2\delta/C_1+2\eps\\
&\le \Vert Du\Vert(\Om)+4\eps\\
&\le (1-\eps)^{-1}(\rcapa_{\BV}(A,D,\Om')+\eps)+4\eps.
\end{split}
\end{equation}
Moreover, by using Lemma \ref{lem:capacity and Newtonian function and bigger set}
again, we find a set $W\supset V$ and a function
$\xi\in N_0^{1,1}(W)$ such that $\capa_1(W)< \delta$, $0\le \xi\le 1$ on
$X$, $\xi=1$ on $V$, and $\Vert \xi\Vert_{N^{1,1}(X)}< \delta$.
Since $\xi$ is $1$-finely continuous $1$-q.e. by \cite[Corollary 5.4]{L-FC},
we have $\xi=1$ $1$-q.e. on $\overline{V}^1$ (the $1$-fine closure of $V$).
By Proposition \ref{prop:Lebesgue points for Sobolev functions open set}
and \eqref{eq:null sets of Hausdorff measure and capacity}
we have $\xi^{\wedge}=1$ $\mathcal H$-a.e. on $\overline{V}^1$, and now
clearly $(\xi u)^{\wedge}=1$ $\mathcal H$-a.e. on
$A\cap \overline{V}^1$.
Clearly also $(\xi u)^{\wedge}=(\xi u)^{\vee}=0$ $\mathcal H$-a.e. on $\Om'\setminus D$,
and $\xi u\in L^1(\Om)$. Thus $\xi u$ is admissible for
$\rcapa_{\BV}(A\cap \overline{V}^1,D,\Om')$.
By the Leibniz rule, see \cite[Proposition 4.2]{KKST3}, we get
for constant $C=C(C_d,C_P,\lambda)$
\begin{align*}
&\rcapa_{\BV}(A\cap \overline{V}^1,D,\Om')\\
&\qquad\qquad\le \Vert D(\xi u)\Vert(\Om)\\
&\qquad\qquad\le C\left(\int_\Om \xi^{\vee}\,d\Vert Du\Vert+\int_\Om u^{\vee} \,d\Vert D\xi\Vert\right)\\
&\qquad\qquad\le C\left(\int_\Om \xi\,d\Vert Du\Vert+\int_\Om u g_{\xi}\,d\mu\right)
\quad \textrm{by Proposition }\ref{prop:Lebesgue points for Sobolev functions open set},\ \eqref{eq:absolute continuity of var measure wrt H},\textrm{  and }\eqref{eq:Sobolev subclass BV}\\
&\qquad\qquad\le C\left(\Vert D u\Vert(\Om\cap W)+\int_\Om g_{\xi}\,d\mu\right)\\
&\qquad\qquad< C(\eps+ \delta);
\end{align*}
recall that $\Vert Du\Vert(W\cap \Om)<\eps$ since $\capa_1(W)<\delta$.
Combining the above with \eqref{eq:capacity of A minus V}, we conclude that for
any $\eps>0$ there is an open set $V\subset X$ such that $\capa_1(V)<\eps$,
\[
\rcapa_1(A\setminus V,D,\Om')\le \rcapa_{\BV}(A,D,\Om')+\eps,\quad\textrm{and}\quad
\rcapa_{\BV}(A\cap \overline{V}^1,D,\Om')<\eps.
\]

Fix a new $\eps>0$.
Note that $\overline{V}^1$ is $1$-finely closed and thus $1$-quasiclosed
by Theorem \ref{thm:finely open is quasiopen and vice versa}, and thus
$A\cap \overline{V}^1$ is $1$-quasiclosed with respect to $\Om'$.
Thus we can repeat the above procedure with $A$
replaced by $A\cap \overline{V}^1$. Denote $V=V_1$.
Inductively, for each $i\in\N$ we find a set $V_i$ such that
$\capa_1(V_i)< 1/i$,
\begin{equation}\label{eq:estimate for 1capacity with BVcapacity}
\rcapa_{1}\left(A\cap \bigcap_{j=1}^{i-1}\overline{V_j}^1
\setminus V_i,D,\Om'\right)
\le \rcapa_{\BV}\left(A\cap \bigcap_{j=1}^{i-1}\overline{V_j}^1,D,\Om'\right)+2^{-i}\eps,
\end{equation}
and
\begin{equation}\label{eq:estimate for small BVcapacity}
\rcapa_{\BV}\left(A\cap \bigcap_{j=1}^{i}\overline{V_j}^1,D,\Om'\right)\le 2^{-i}\eps.
\end{equation}
For each $k\in\N$, clearly
\[
A\setminus \bigcup_{i=1}^{k}
\left(A\cap \bigcap_{j=1}^{i-1}\overline{V_j}^1\setminus V_i\right)
\subset V_k,
\]
with $\capa_1(V_k)<1/k$, and so
\[
A\setminus \bigcup_{i=1}^{\infty}
\left(A\cap \bigcap_{j=1}^{i-1}\overline{V_j}^1\setminus V_i\right)
\]
is a set of $1$-capacity zero. Thus by the subadditivity of $\rcapa_1$
(see \cite[Theorem 3.4]{BB-cap})
\begin{align*}
\rcapa_1(A,D,\Om')
&\le\sum_{i=1}^{\infty}\rcapa_{1}\left(A\cap \bigcap_{j=1}^{i-1}
\overline{V_j}^1 \setminus V_i,D,\Om'\right)\\
&\le \sum_{i=1}^{\infty}\left(\rcapa_{\BV}\left(A\cap \bigcap_{j=1}^{i-1}\overline{V_j}^1,D,\Om'\right)+2^{-i}
\eps\right)\quad\textrm{by }\eqref{eq:estimate for 1capacity with BVcapacity}\\
&\le \rcapa_{\BV}(A,D,\Om')+2^{-1}\eps+\sum_{i=2}^{\infty}\left(2^{-i+1}\eps+
2^{-i}\eps\right)\quad\textrm{by }\eqref{eq:estimate for small BVcapacity}\\
&= \rcapa_{\BV}(A,D,\Om')+2\eps.
\end{align*}
Letting $\eps\to 0$, we get the result.
\end{proof}

\section{The approximation result}\label{sec:approximation}

In this section we prove our approximation result,
Theorem \ref{thm:main}. Recall that  this theorem states that
we can approximate a given $\BV$ function by $\SBV$ functions in the strict sense,
pointwise uniformly, and without adding significant jumps.
First we note that if we were to drop one of the last two conditions,
the proof would be straightforward.
Again we will always denote by $\Om$ a nonempty open set.

\begin{example}\label{ex:simpler approximations}
Let $u\in\BV(\Om)$.
From Lemma \ref{lem:choosing approximating function}
(essentially, from the definition of the total variation)
we obtain a sequence
$(u_i)\subset \SBV(\Om)$ (in fact, $(u_i)\subset \liploc(\Om)$)
such that $u_i\to u$ strictly and
$\mathcal H(S_{u_i}\setminus S_u)=0$, because in fact
$S_{u_i}=\emptyset$.
Usually, however, the $u_i$'s do not converge to $u$
uniformly, and this is in fact impossible for example when $u$ is a function
on the real line with a nonempty jump set.
Nonetheless, when $u$ is the Cantor ternary function on the unit interval
and the $u_i$'s are the usual Lipschitz functions used in its construction
(see e.g. \cite[Example 1.67]{AFP}), then also $u_i\to u$ uniformly.

On the other hand, assuming for simplicity that $\Om$ is bounded and $u$ is nonnegative,
if we define approximations
		\[
		u_i:=\frac{1}{i}\sum_{j=1}^{\infty} \ch_{\{u>t_{i,j}\}}\quad\textrm{with}\quad
		\Vert Du_i\Vert(\Om)\le \frac{1}{i}\sum_{j=1}^{\infty} P({\{u>t_{i,j}\}},\Om),
		\]
		then by the coarea formula \eqref{eq:coarea} we can see that
		with a suitable choice of the numbers $t_{i,j}\ge 0$,
		we get $u_i\to u$ strictly, and uniformly.
		However, now the jump sets $S_{u_i}$ are usually very large.
\end{example}

To prove the approximation result, we first consider a case where the function only
 has small jumps.

\begin{proposition}\label{prop:uniform approximation}
	Let $\Om'\subset \Om$ such that $\Om'$ is $1$-quasiopen
	and $\Om$ is open, and let $u\in\BV(\Om)$ and $\beta>0$ such that
	$u^{\vee}-u^{\wedge}<\beta$ in $\Om'$. Then for every $\eps>0$ there exists
	$v\in N^{1,1}(\Om')$ such that
	$\Vert v-u\Vert_{L^{\infty}(\Om')}\le 4\beta$ and
	\[
	\int_{\Om'}g_v\,d\mu\le \Vert Du\Vert(\Om')+\eps.
	\]
\end{proposition}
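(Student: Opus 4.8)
The plan is to exploit the hypothesis $u^{\vee}-u^{\wedge}<\beta$ on $\Om'$ by slicing $u$ into dyadic level sets of width comparable to $\beta$, replacing $u$ on each slab by a constant, and then smoothing the resulting ``staircase'' function so that the total variation is not increased by more than $\eps$. Concretely, I would first reduce to the case $0\le u\le M$ for some finite $M$ by truncation (truncations decrease the total variation by \eqref{eq:variation of min and max}, and the $L^\infty$-distance is preserved on the relevant range; a limiting/exhaustion argument handles unbounded $u$). Set $t_k:=k\beta$ for $k\in\Z$ and consider the level sets $E_k:=\{u>t_k\}$. Because $u^{\vee}-u^{\wedge}<\beta$ in $\Om'$, for $\mathcal H$-a.e. $x\in\Om'$ at most one index $k$ has $t_k\in(u^{\wedge}(x),u^{\vee}(x))$, so the measure-theoretic boundaries $\partial^*E_k\cap\Om'$ are essentially pairwise disjoint; via the coarea formula \eqref{eq:coarea} and the decomposition \eqref{eq:variation measure decomposition} this gives $\sum_k P(E_k,\Om')\le \beta^{-1}\Vert Du\Vert(\Om')$ (up to the jump being ``spread out'', which costs nothing in this bound thanks to disjointness).

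Next I would build the staircase $w:=\beta\sum_{k\ge 1}\ch_{E_k}$ (appropriately centered), which satisfies $\Vert w-u\Vert_{L^\infty(\Om')}\le\beta$ and $\Vert Dw\Vert(\Om')\le\beta\sum_k P(E_k,\Om')\le\Vert Du\Vert(\Om')$; however $w$ is still a BV function with a (small but nonempty) jump set, so it is not yet in $N^{1,1}$. The key step is therefore to smooth each characteristic function $\ch_{E_k}$. Since $E_k$ has finite perimeter in $\Om'$ and $\Om'$ is $1$-quasiopen, I would invoke Theorem \ref{thm:characterization of total variational} (the characterization of the total variation in quasiopen sets) to find $\phi_{k}\in N^{1,1}_{\loc}(\Om')$ with $0\le\phi_k\le 1$, $\phi_k\to\ch_{E_k}$ in $L^1_{\loc}(\Om')$, and $\int_{\Om'}g_{\phi_k}\,d\mu\le P(E_k,\Om')+2^{-k}\eps/\beta$; one then forms $v:=\beta\sum_{k\ge 1}\phi_k^{(i)}$ with a suitable diagonal choice of the approximating index, so that $\int_{\Om'}g_v\,d\mu\le\sum_k\beta\int_{\Om'}g_{\phi_k}\,d\mu\le\Vert Du\Vert(\Om')+\eps$ and $v\in N^{1,1}(\Om')$. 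Controlling the $L^\infty$-error of $v$ is where the extra slack ``$4\beta$'' is consumed: the smoothed $\phi_k$'s need not be monotone in $k$ nor take values in $\{0,1\}$, so a single smoothed slab can contribute up to $O(\beta)$ extra in each direction; choosing the $\phi_k$ carefully (e.g. as fixed multiples of truncations of a Newtonian approximant of $u$ itself, rather than smoothing each level set independently), or truncating $v$ back into the band $[u^{\wedge}-2\beta,u^{\vee}+2\beta]$, keeps $\Vert v-u\Vert_{L^\infty(\Om')}\le 4\beta$.

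I expect the main obstacle to be precisely the $L^\infty$-control combined with the finiteness of the energy: the naive ``smooth each level set separately'' approach loses monotonicity of the staircase and can pile up uniform error, while a global Newtonian approximation of $u$ (from Theorem \ref{thm:characterization of total variational} applied directly to $u$) gives $L^1_{\loc}$- but not $L^\infty$-convergence. The resolution I would push is to apply Theorem \ref{thm:characterization of total variational} to $u$ on $\Om'$ to get $v_i\in N^{1,1}_{\loc}(\Om')$ with $v_i\to u$ in $L^1_{\loc}(\Om')$ and $\int_{\Om'}g_{v_i}\,d\mu\to\Vert Du\Vert(\Om')$, then \emph{project} $v_i$ onto the band around $u$ by replacing it with $\min\{u^{\vee}+2\beta,\max\{u^{\wedge}-2\beta,v_i\}\}$; since $u^{\vee},u^{\wedge}$ differ by less than $\beta$ this projection changes $v_i$ only where $v_i$ already deviates from $u$ by more than $2\beta$, and because $v_i\to u$ in measure the ``bad'' set shrinks, so a Mazur-lemma / diagonal argument upgrades $L^1_{\loc}$-convergence to the desired uniform bound $\le 4\beta$ while keeping the energy below $\Vert Du\Vert(\Om')+\eps$ (using that truncation against the $N^{1,1}$-functions $u^{\wedge}\pm2\beta$-type obstacles does not increase $g$, via a Leibniz/truncation estimate as in \cite[Lemma 2.18]{BB}). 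The routine bookkeeping — exhausting $\Om'$ by quasiopen subsets, passing to subsequences, summing the $2^{-k}\eps$ errors — I would carry out as in the proofs of Lemmas \ref{lem:choosing approximating function} and \ref{lem:BV pasting lemma}.
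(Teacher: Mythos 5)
Your overall layer--cake strategy (slice $u$ into slabs of height $\beta$ and replace each slab by a Newton--Sobolev function) is indeed the skeleton of the paper's proof, and you have correctly located the crux: getting uniform ($4\beta$) control \emph{and} the energy bound simultaneously. But neither of your proposed mechanisms for producing the Newtonian slab functions actually delivers this, and this is a genuine gap. Smoothing each $\ch_{E_k}$ via Theorem \ref{thm:characterization of total variational} only gives $L^1_{\loc}$-convergence, so the resulting $\phi_k$ need not equal $1$ anywhere on $E_k$ nor vanish anywhere off it; the sum $\beta\sum_k\phi_k$ then has no pointwise relation to $u$ at all, and no diagonal choice of approximating indices repairs this. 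Your fallback --- truncating a Newtonian approximant of $u$ against the obstacles $u^{\wedge}-2\beta$ and $u^{\vee}+2\beta$ --- fails because these obstacles are \emph{not} Newton--Sobolev functions (if they were, there would be nothing to prove): the truncation $\min\{u^{\vee}+2\beta,\max\{u^{\wedge}-2\beta,v_i\}\}$ leaves $N^{1,1}(\Om')$ and, on the set where the obstacle is active, its variation picks up precisely the Cantor part of $u$ that the proposition is designed to eliminate. The claim that ``truncation against the $N^{1,1}$-functions $u^{\wedge}\pm2\beta$-type obstacles does not increase $g$'' is therefore unfounded.

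The missing ingredient is Theorem \ref{thm:1capacity and BVcapacity are equal}, i.e.\ $\rcapa_1(A,D,\Om')\le\rcapa_{\BV}(A,D,\Om')$ for $A$ $1$-quasiclosed and $D$ $1$-quasiopen --- this is exactly why the paper develops that capacity result. One sets $A_i:=\{u^{\vee}\ge(i+1)\beta\}\cap\Om'$ ($1$-quasiclosed by Proposition \ref{prop:quasisemicontinuity}) and $D_i:=\{u^{\wedge}>(i-2)\beta\}\cap\Om'$ ($1$-quasiopen); the hypothesis $u^{\vee}-u^{\wedge}<\beta$ gives $A_i\subset\{u^{\wedge}\ge i\beta\}\subset\{u^{\vee}>(i-1)\beta\}\subset D_i$, and the truncation $\tfrac1\beta\min\{\beta,(u-(i-1)\beta)_+\}$ is admissible for $\rcapa_{\BV}(A_i,D_i,\Om')$, so that by the coarea formula $\rcapa_{\BV}(A_i,D_i,\Om')\le\tfrac1\beta\int_{(i-1)\beta}^{i\beta}P(\{u>t\},\Om')\,dt$ (this also sidesteps your unjustified Riemann-sum bound $\sum_kP(E_k,\Om')\le\beta^{-1}\Vert Du\Vert(\Om')$ for the fixed levels $t_k=k\beta$). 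The capacity theorem then yields $v_i\in N_0^{1,1}(D_i,\Om')$ with $v_i=1$ \emph{exactly} on $A_i$, $v_i=0$ off $D_i$, $0\le v_i\le1$, and comparable energy; these exact pointwise values are what force $u^{\vee}-4\beta\le\beta\sum_{i\ge3}v_i\le u^{\wedge}$ and hence the $4\beta$ bound, while summing the energies gives $\Vert Du\Vert(\Om')+\eps$. Without a tool of this strength, the $L^{\infty}$-control cannot be recovered from $L^1_{\loc}$-approximation.
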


\begin{proof}
First assume that $u\ge 0$.
Fix $\eps>0$.
For each $i\in\N$, let
\[
A_i:=\{x\in\Om':\,u^{\vee}(x)\ge (i+1)\beta\}
\quad\textrm{and}\quad
D_i:=\{x\in\Om':\,u^{\wedge}(x)> (i-2)\beta\}.
\]
By Proposition \ref{prop:quasisemicontinuity}, each $A_i$ is $1$-quasiclosed with respect to $\Om'$.
It is straightforward to check that the intersection of two
$1$-quasiopen sets is $1$-quasiopen, and so
each $D_i$ is $1$-quasiopen (with respect to $X$).
Moreover, for all $i\in\N$,
\[
A_i
\subset \{x\in\Om':\,u^{\wedge}(x)\ge i\beta\}\\
\subset \{x\in\Om':\,u^{\vee}(x)> (i-1)\beta\}\subset D_i.
\]
Fix $i\in\N$. Let
\[
u_i:=\frac{1}{\beta}\min\{\beta,(u-(i-1)\beta)_+\}\in\BV(\Om).
\]
Then
\begin{align*}
\rcapa_{\BV}(A_i,D_i,\Om')
&\le \rcapa_{\BV}(\{u^{\wedge}\ge i\beta\}\cap\Om',
\{u^{\vee}> (i-1)\beta\}\cap\Om',\Om')\\
&\le \Vert Du_i\Vert(\Om')\\
&=\frac{1}{\beta}\int_{(i-1)\beta}^{i\beta}P(\{u>t\},\Om')\,dt
\end{align*}
by the coarea formula \eqref{eq:coarea}, which also applies to $1$-quasiopen sets,
see \cite[Proposition 3.8]{L-LSC}.
By Theorem \ref{thm:1capacity and BVcapacity are equal}
we find a function $v_i\in N_0^{1,1}(D_i,\Om')$ such that
$v_i=1$ on $A_i$ and
\[
\int_{\Om'}g_{v_i}\,d\mu<\rcapa_{\BV}(A_i,D_i,\Om')+\frac{2^{-i}\eps}{\beta}
\le\frac{1}{\beta}\int_{(i-1)\beta}^{i\beta}P(\{u>t\},\Om')\,dt
+\frac{2^{-i}\eps}{\beta}.
\]
Now define
\[
v:=\beta\sum_{i=3}^{\infty}v_i.
\]
It is easy to check that
$u^{\vee}-4\beta\le v\le u^{\wedge}$ on $\Om'$, so that
$\Vert v-u\Vert_{L^{\infty}(\Om')}\le 4\beta$
and also $v\in L^1(\Om')$.
Since $g_v\le \beta\sum_{i=3}^{\infty} g_{v_i}$ (see e.g. \cite[Lemma 1.52]{BB}),
\begin{align*}
\int_{\Om'} g_v\,d\mu
&\le \beta\sum_{i=1}^{\infty}\int_{\Om'} g_{v_i}\,d\mu\\
&\le\beta\sum_{i=1}^{\infty}\left(\frac{1}{\beta}
\int_{(i-1)\beta}^{i\beta}P(\{u>t\},\Om')\,dt+\frac{2^{-i}\eps}{\beta}\right)\\
&=\int_{0}^{\infty}P(\{u>t\},\Om')\,dt+\eps\\
&=\Vert Du\Vert(\Om')+\eps.
\end{align*}
This completes the proof in the case $u\ge 0$.

In the general case, we find a function $w_1\in N^{1,1}(\Om')$ corresponding to $u_+$ and
a function $w_2\in N^{1,1}(\Om')$ corresponding to $u_-$. Then for
$v:=w_1-w_2\in N^{1,1}(\Om')$ we have
$\Vert v-u\Vert_{L^{\infty}(\Om')}\le 4\beta$ and
\begin{align*}
\int_{\Om'}g_{v}\,d\mu\le \int_{\Om'}g_{w_1}\,d\mu+\int_{\Om'}g_{w_2}\,d\mu
&\le \Vert Du_+\Vert(\Om')+\Vert Du_-\Vert(\Om')+2\eps\\
&=\Vert Du\Vert(\Om')+2\eps,
\end{align*}
where the last inequality follows from the coarea formula.
\end{proof}

Now we consider the more general case where
$u$ may also have large jumps.

\begin{proposition}\label{prop:uniform approximation for bounded functions}
Let $u\in \BV(\Om)$ and let $\eps>0$.
Then there exists
$v\in \BV(\Om)$ such that $\Vert v-u\Vert_{L^{\infty}(\Om)}\le \eps$,
$\Vert Dv\Vert(\Om)\le \Vert Du\Vert(\Om)+\eps$,
$\Vert Dv\Vert^c(\Om)=0$,
and $\mathcal H(S_v\setminus S_u)=0$.
\end{proposition}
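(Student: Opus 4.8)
The idea is to decompose $u$ into its "large jump" part and a remainder that has only small jumps, handle the small-jump part using Proposition \ref{prop:uniform approximation}, and then glue things back together using Lemma \ref{lem:BV pasting lemma}. First I would fix a small parameter $\beta>0$ (to be chosen comparably to $\eps$), and use the decomposition \eqref{eq:variation measure decomposition} together with the $\sigma$-finiteness of $S_u$ to write the jump set as a countable union of pieces. More precisely, the jump part $\Vert Du\Vert^j$ lives on $S_u=\{u^\wedge<u^\vee\}$, and the set $\{u^\vee-u^\wedge\ge\beta\}$ has finite $\mathcal H$-measure on sets of finite measure (by \eqref{eq:variation measure decomposition} and $\theta_{\{u>t\}}\ge\alpha$). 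The plan is to remove the contribution of these "big jumps" by subtracting off a suitable $\BV$ function whose variation is supported near $\{u^\vee-u^\wedge\ge\beta\}$, leaving a function $\tilde u$ with $u^\vee-u^\wedge<\beta$ holding outside an exceptional set — ideally, $1$-quasieverywhere on a $1$-quasiopen set $\Om'\subset\Om$ whose complement is $\mathcal H$-negligible, or more realistically by splitting $\Om$ along level sets so that on each relevant region the oscillation of the modified function is controlled.

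Concretely, I would proceed as follows. For each integer $k$, consider the truncation $u^k:=\max\{-k\beta,\min\{k\beta,u\}\}$, or better, write $u$ via the layer-cake / coarea structure and separate those levels $t$ at which $\{u>t\}$ has large perimeter contribution from the jump set. A cleaner route: let $\Sigma_\beta:=\{x\in\Om:\,u^\vee(x)-u^\wedge(x)\ge\beta\}$; this is $\sigma$-finite with respect to $\mathcal H$, hence $\Vert Du\Vert^c(\Sigma_\beta)=0$ and $\Vert Du\Vert^j$ restricted to $\Sigma_\beta$ is a purely atomic-in-codimension-one measure. Using Proposition \ref{prop:quasisemicontinuity}, $\Sigma_\beta$ is contained in a $1$-quasiclosed set, so $\Om\setminus\Sigma_\beta$ contains a $1$-quasiopen set $\Om'$ with $\mathcal H(\Om\setminus(\Om'\cup\Sigma_\beta))=0$; on $\Om'$ we have $u^\vee-u^\wedge<\beta$. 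Apply Proposition \ref{prop:uniform approximation} on $\Om'$ to obtain $w\in N^{1,1}(\Om')$ with $\Vert w-u\Vert_{L^\infty(\Om')}\le 4\beta$ and $\int_{\Om'}g_w\,d\mu\le\Vert Du\Vert(\Om')+\eps$. The Newton–Sobolev function $w$ is in $\SBV$, in particular $\mathcal H(S_w)=0$ by Proposition \ref{prop:Lebesgue points for Sobolev functions open set}.

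The main obstacle — and the step that needs the most care — is stitching $w$ (defined on $\Om'$) together with the part of $u$ carrying the big jumps, so as to produce a single $v\in\BV(\Om)$ that is globally $L^\infty$-close to $u$, has total variation close to $\Vert Du\Vert(\Om)$, satisfies $\Vert Dv\Vert^c(\Om)=0$, and has $S_v\subset S_u$ up to $\mathcal H$-null sets. Here I would use Lemma \ref{lem:BV pasting lemma}: one builds a sequence $u_i$ that uniformly approximates $u$, each $u_i$ obtained by pasting the Cantor-free approximation $w$ (or rather a $\BV$ extension of it) on the small-jump region with a modification of $u$ on neighborhoods of $\Sigma_\beta$; crucially Lemma \ref{lem:BV pasting lemma} guarantees $S_v\subset\bigcup_i S_{u_i}$ and preserves the vanishing of the Cantor part provided each $u_i$ has $\Vert Du_i\Vert^c=0$. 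To control the big-jump contribution without inflating the total variation, one truncates: for levels separated by $\beta$ one replaces $u$ by the piecewise-constant "staircase" $\tfrac{\beta}{?}\sum_j \ch_{\{u>t_j\}}$ only across the jumps of size $\ge\beta$, which costs only perimeter already accounted for in $\Vert Du\Vert^j$, and is $\SBV$ since characteristic functions of finite-perimeter sets are in $\SBV$. Finally one lets $\beta\to 0$ along a subsequence, combining the $L^\infty$ bounds ($\le 4\beta$ on $\Om'$ plus $\le\beta$-order errors from the staircase), the variation bounds, and the fact that each approximant has no Cantor part, to extract the desired $v$; in fact the proposition as stated gives a single $v$ for each $\eps$, so one simply takes $\beta$ small enough (e.g. $\beta\le\eps/8$) and reads off the conclusion. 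The delicate point throughout is ensuring the region where we "cut and paste" — the boundary between the small-jump region and the big-jump neighborhoods — does not create new jump set: this is where the $1$-quasiopen/$1$-quasiclosed bookkeeping from Section \ref{sec:capacities} and the support/cutoff lemmas (Lemmas \ref{lem:cutoff function lemma}, \ref{lem:extension of compactly supported function}) do the essential work.
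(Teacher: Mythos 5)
Your opening move matches the paper's: set $S:=\{u^{\vee}-u^{\wedge}\ge\delta\}$ for a small $\delta$, note that $\Om\setminus S$ is $1$-quasiopen by Proposition \ref{prop:quasisemicontinuity}, and apply Proposition \ref{prop:uniform approximation} there to obtain $v\in N^{1,1}(\Om\setminus S)$ with $\Vert v-u\Vert_{L^{\infty}(\Om\setminus S)}\le 4\delta$ and the gradient bound. From there, however, your plan goes off course. No pasting is needed at all: since $\mathcal H(S)<\infty$ by \eqref{eq:variation measure decomposition}, we have $\mu(S)=0$, so $v$ is already defined $\mu$-a.e.\ on $\Om$ and is itself the final approximant --- there is no ``interface'' between a small-jump region and big-jump neighborhoods, and no staircase to glue in. Your proposed use of Lemma \ref{lem:BV pasting lemma} is misplaced here (that lemma takes a sequence already in $\BV$ of the whole open set and is the tool for Theorem \ref{thm:approximation}, not for this proposition), and the staircase $\sum_j\ch_{\{u>t_j\}}$ is precisely the construction that Example \ref{ex:simpler approximations} warns creates very large new jump sets.

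More importantly, the two genuinely hard steps are absent from your plan. First, one must show that $v$, known a priori only to be Newton--Sobolev on the quasiopen set $\Om\setminus S$, actually lies in $\BV(\Om)$ with controlled total variation; the paper does this by covering $S$ with balls of small radii nearly realizing $\mathcal H(S)$, multiplying truncations of $v-u_i$ by cutoffs vanishing on the cover, and verifying the upper gradient inequality curve by curve using that $\Om\setminus S$ is $1$-path open. Second, one needs $\Vert D(v-u)\Vert(S)\le 8C_d\,\delta\,\mathcal H(S)$ to be small, which forces a choice of $\delta$ with $\delta\,\mathcal H(\{u^{\vee}-u^{\wedge}\ge\delta\})$ small; this is possible only because $\liminf_{t\to 0}t\,\mathcal H(\{u^{\vee}-u^{\wedge}>t\})=0$, a consequence of Cavalieri's principle applied to the jump part. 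Without the curve/covering argument and this calibrated choice of $\delta$, the bound $\Vert Dv\Vert(\Om)\le\Vert Du\Vert(\Om)+\eps$ (as opposed to a bound only on $\Om\setminus S$) is not established, so the proposal has a genuine gap.
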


\begin{proof}
Fix $0<\delta<\min\{1,\eps\}/4$ to be chosen later.
Let $S:=\{x\in \Om:\, u^{\vee}-u^{\wedge}\ge \delta\}$.
By Proposition \ref{prop:quasisemicontinuity}, $\Om\setminus S$ is a $1$-quasiopen set.
Apply Lemma \ref{prop:uniform approximation} to find a function
$v\in N^{1,1}(\Om\setminus S)$ such that
$\Vert v-u\Vert_{L^{\infty}(\Om\setminus S)}\le 4\delta$
and
\begin{equation}\label{eq:estimate for gv in Om minus S}
\int_{\Om\setminus S}g_v\,d\mu\le \Vert Du\Vert(\Om\setminus S)+\eps.
\end{equation}
By the decomposition \eqref{eq:variation measure decomposition} it is clear that
$\mathcal H(S)<\infty$, from which it easily follows that $\mu(S)=0$.
Thus we have in fact $\Vert v-u\Vert_{L^{\infty}(\Om)}\le \eps$
and $v\in L^1(\Om)$, as desired.

Now we estimate $\Vert Dv\Vert(\Om)$.
Take a sequence $(u_i)\subset N^{1,1}(\Om)$
(from Lemma \ref{lem:choosing approximating function})
such that $u_i\to u$ in $L^1(\Om)$ and
$\int_{\Om}g_{u_i}\,d\mu\to\Vert Du\Vert(\Om)$.
Then $v-u_i\to v-u$ in $L^1(\Om)$.
Letting $w_i:=\min\{1,\max\{-1,v-u_i\}\}$, we have
$w_i\to v-u$ in $L^1(\Om)$.
Let $i\in\N$ be fixed.
We find a covering $\{B_j=B(x_j,r_j)\}_{j=1}^{\infty}$
such that $r_j\le 1/i$ for all $j$, $S\subset \bigcup_{j=1}^{\infty}B_j$, and
\begin{equation}\label{eq:covering of boundary of Omega}
\sum_{j=1}^{\infty} \frac{\mu(B(x_j,r_j))}{r_j}<\mathcal H(S)+1/i.
\end{equation}
Then pick $1/r_j$-Lipschitz functions $\eta_j$ such that
$0\le \eta_j\le 1$ on $X$, $\eta_j=1$ on $B(x_j,r_j)$, and $\eta_j=0$ outside $B(x_j,2r_j)$.
Define
$\rho_i:= \sup_{j\in \N} \eta_j$.
Consider the function
\[
h_i:=(1-\rho_i)w_i.
\]
Let $g\in L^1(\Om\setminus S)$ be a $1$-weak upper gradient of $w_i$ in
$\Om\setminus S$; for example $g_{v}+g_{u_i}$ will do.
By \cite[Corollary 2.21]{BB} we know that 
$\ch_{2B_j}/r_j$ is a $1$-weak upper gradient of $\eta_j$ (in $X$), and then
$\sum_{j=1}^{\infty}\frac{\ch_{2B_j}}{r_j}$ is a $1$-weak upper gradient of $\rho_i$
(in $X$) by e.g. \cite[Lemma 1.52]{BB}.
We show that
\[
g_i:= g+\sum_{j=1}^{\infty}\frac{\ch_{2B_j}}{r_j}
\]
is a $1$-weak upper gradient of $h_i$ in $\Om$.
By the Leibniz rule \cite[Theorem 2.15]{BB},
$g_i$ is a $1$-weak upper gradient of
$h_i$ in $\Omega\setminus S$; recall that $\Vert w_i\Vert_{L^{\infty}(\Om)}\le 1$.
Take a curve $\gamma$ in $\Om$ such that the upper gradient inequality is satisfied by
$h_i$ and $g_i$ on all subcurves of $\gamma$ in $\Om\setminus S$;
this is true for $1$-a.e. curve in $\Om$ by \cite[Lemma 1.34]{BB}.
Moreover, the fact that $\Om\setminus S$ is $1$-quasiopen implies 
by \cite[Remark 3.5]{S2} that it is also \emph{$1$-path open},
meaning that for $1$-a.e. curve $\gamma$, the set
$\gamma^{-1}(\Om\setminus S)$ is a relatively open subset of $[0,\ell_{\gamma}]$.
Thus we can assume that $\gamma^{-1}(S)$ is a compact subset
of the relatively open set 
$\gamma^{-1}\big(\bigcup_{j=1}^{\infty} B_j\big)$.
Thus
$\gamma$ can be split into a finite number of subcurves each of which lies either entirely in $\bigcup_{j=1}^{\infty} B_j$, or entirely in
$\Omega\setminus S$.
If $\gamma_1$ is a subcurve lying entirely in $\bigcup_{j=1}^{\infty} B_j$,
\[
|h_i(\gamma_1(0))-h_i(\gamma_1(\ell_{\gamma_1}))|=|0-0|=0,
\]
so the upper gradient inequality is satisfied. If $\gamma_2$ is a subcurve lying entirely in $\Omega\setminus S$, then
\[
|h_i(\gamma_2(0))-h_i(\gamma_2(\ell_{\gamma_2}))|\le \int_{\gamma_2}g_i\,ds
\]
by our choice of $\gamma$. Summing over the subcurves, we obtain
\[
|h_i(\gamma(0))-h_i(\gamma(\ell_{\gamma}))|\le \int_{\gamma}g_i\,ds.
\]
Thus $g_i$ is a $1$-weak upper gradient of $h_i$ in $\Om$.
By \eqref{eq:covering of boundary of Omega} we have
\begin{equation}\label{eq:L1 estimate for gi}
\Vert g_i\Vert_{L^1(\Om)}\le \Vert g\Vert_{L^1(\Om\setminus S)}
+C_d(\mathcal H(S)+1/i).
\end{equation}
Since $-1\le w_i\le 1$,
\[
\Vert h_i-w_i\Vert_{L^1(\Om)}=\Vert \rho_i w_i\Vert_{L^1(\Om)}
\le \Vert \rho_i \Vert_{L^1(\Om)}\le \frac{1}{i}\sum_{j=1}^{\infty}
\frac{\mu(2B_j)}{r_j}\le \frac{C_d}{i}(\mathcal H(S)+1/i).
\]
Recall that $w_i\to v-u$ in $L^1(\Om)$.
Thus also $h_i\to v-u$ in $L^1(\Om)$, and so
by \eqref{eq:L1 estimate for gi}
\[
\Vert D(v-u)\Vert(\Om)\le \liminf_{i\to\infty}\Vert g_i\Vert_{L^1(\Om)}\le 
\Vert g\Vert_{L^1(\Om\setminus S)}+C_d \mathcal H(S)<\infty.
\]
Thus also $\Vert Dv\Vert(\Om)<\infty$ (recall
\eqref{eq:BV functions form vector space}).
By the decomposition \eqref{eq:variation measure decomposition}
and the discussion after it,
we find that only the jump part of $\Vert D(v-u)\Vert$ can charge $S$,
and then from the fact that
$\Vert v-u\Vert_{L^{\infty}(\Om)}\le 4\delta$ we get
\begin{equation}\label{eq:Dv minus u in S}
\begin{split}
\Vert D(v-u)\Vert(S)
\le C_d\int_S ((v-u)^{\vee}-(v-u)^{\wedge})\,d\mathcal H
\le 8C_d\delta\mathcal H(S).
\end{split}
\end{equation}
By another application of the decomposition \eqref{eq:variation measure decomposition},
\[
\infty>\Vert Du\Vert(S_u)\ge \alpha \int_{S_u} (u^{\vee}-u^{\wedge})\,d\mathcal H
=\alpha\int_0^{\infty}\mathcal H(\{u^{\vee}-u^{\wedge}>t\})\,dt
\]
by Cavalieri's principle.
Since the function $t\mapsto \mathcal H(\{u^{\vee}-u^{\wedge}>t\})$
is thus integrable, necessarily
\[
\liminf_{t\to 0}t\mathcal H(\{u^{\vee}-u^{\wedge}>t\})=0.
\]
Thus by choosing a suitable small $\delta$, we can ensure that
$\delta\mathcal H(S)<\eps/(8 C_d)$. Hence \eqref{eq:Dv minus u in S} gives
$\Vert D(v-u)\Vert(S)\le \eps$ and so
\[
\Vert Dv\Vert(S)\le \Vert Du\Vert(S)+\eps.
\]
Thus we get (note that $1$-quasiopen sets can be
seen to be $\Vert Du\Vert$-measurable
by Lemma \ref{lem:variation measure and capacity})
\begin{align*}
\Vert Dv\Vert(\Om)
&=\Vert Dv\Vert(\Om\setminus S)+\Vert Dv\Vert(S)\\
&\le \int_{\Om\setminus S}g_v\,d\mu+\Vert Dv\Vert(S)\quad\textrm{by Theorem }
\ref{thm:characterization of total variational}\\
&\le \Vert Du\Vert(\Om\setminus S)+\eps+\Vert Du\Vert(S)+\eps
\quad\textrm{by }\eqref{eq:estimate for gv in Om minus S}\\
&= \Vert Du\Vert(\Om)+2\eps,
\end{align*}
as desired.
Note that for any $A\subset \Om\setminus S$ with $\mu(A)=0$,
by Theorem \ref{thm:characterization of total variational} we have for any open $W$
with $A\subset W\subset \Om$ that
\[
\Vert Dv\Vert(A)\le \Vert Dv\Vert(W\setminus S)\le\int_{W\setminus S}g_v\,d\mu,
\]
which becomes arbitrarily small by choosing $\mu(W)$ small.
We conclude that $\Vert Dv\Vert^c(\Om\setminus S)=0$, and thus
$\Vert Dv\Vert^c(\Om)=0$ since $\mathcal H(S)<\infty$.

By Theorem \ref{thm:quasicontinuity}, $v$ is $1$-quasicontinuous on
$\Om\setminus S$, so by \cite[Theorem 5.1]{L-NC} it is also
$1$-finely continuous $1$-q.e. on $\Om\setminus S$, and so
by \eqref{eq:thinness and measure thinness} clearly
$v^{\wedge}=v^{\vee}$ $1$-q.e. on $\Om\setminus S$.
Hence
$\mathcal H(S_v\setminus S)=0$ and so 
$\mathcal H(S_v\setminus S_{u})=0$.
\end{proof}

To obtain the strongest possible result, we will apply
the above proposition only in a small open subset of $\Om$ where
the Cantor part of $\Vert Du\Vert$ is concentrated. For this,
we will need the following
extension lemma.

\begin{lemma}\label{lem:extension BV function in quasiopen set}
	Let $W\subset \Om\subset X$ be open sets,
	let $u\in \BV(W)$, and suppose that
	\begin{equation}\label{eq:extension limit condition}
	\lim_{W\ni y\to x}|u|^{\vee}(y)= 0
	\end{equation}
	for all $x\in \partial W$.
	Then $u\in\BV_0(W,\Om)$.
\end{lemma}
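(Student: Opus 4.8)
The plan is to reduce the statement to Lemma~\ref{lem:extension of compactly supported function}, the extension result for $\BV$ functions whose support (relatively closed in $\Om$) is contained in $W$, by first cutting off the part of $u$ that is close to zero.

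First I would fix $\eps>0$ and let $T_\eps\colon\R\to\R$ be the $1$-Lipschitz function given by $T_\eps(s)=s-\eps$ for $s\ge\eps$, $T_\eps(s)=0$ for $|s|\le\eps$, and $T_\eps(s)=s+\eps$ for $s\le-\eps$, and set $u_\eps:=T_\eps\circ u$. Then $u_\eps\in\BV(W)$, and since every superlevel set $\{u_\eps>t\}$ is a superlevel set of $u$, the coarea formula~\eqref{eq:coarea} gives $\Vert Du_\eps\Vert(W)\le\Vert Du\Vert(W)$. The key point is that $\supp_{\Om}u_\eps$ is a relatively closed subset of $\Om$ that is contained in $W$: a point $x\in\Om\setminus\overline W$ is trivially not in the support, while for $x\in\partial W\cap\Om$ the hypothesis~\eqref{eq:extension limit condition} provides $r>0$ with $|u|^{\vee}(y)<\eps$ for all $y\in W\cap B(x,r)$, and since $|u|=|u|^{\vee}$ $\mu$-a.e.\ on $W$ by the Lebesgue differentiation theorem, this forces $u_\eps=0$ $\mu$-a.e.\ on $W\cap B(x,r)$, so $\mu(B(x,r)\cap\{u_\eps\neq 0\})=0$ and $x\notin\supp_{\Om}u_\eps$. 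Hence, by Lemma~\ref{lem:extension of compactly supported function}, the zero extension of $u_\eps$ (still denoted $u_\eps$) lies in $\BV_0(W,\Om)$ and satisfies $\Vert Du_\eps\Vert(\Om)=\Vert Du_\eps\Vert(W)\le\Vert Du\Vert(W)$.

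Next I would let $\eps\to 0$. Writing $u_0$ for the zero extension of $u$ to $\Om$, we have $u_0\in L^1(\Om)$, and since $|u_\eps-u|\le\min\{\eps,|u|\}$ on $W$, dominated convergence gives $u_\eps\to u_0$ in $L^1(\Om)$. Combining this with the uniform bound $\Vert Du_\eps\Vert(\Om)\le\Vert Du\Vert(W)$ and the lower semicontinuity of the total variation with respect to $L^1$-convergence yields $\Vert Du_0\Vert(\Om)\le\Vert Du\Vert(W)<\infty$, so $u_0\in\BV(\Om)$.

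It then remains to check that $u_0^{\wedge}=u_0^{\vee}=0$ $\mathcal H$-a.e.\ on $\Om\setminus W$ (in fact this holds everywhere there). For $x\in\Om\setminus\overline W$ it is clear, since $u_0\equiv 0$ on a ball about $x$. For $x\in\partial W\cap\Om$ and $\delta>0$, the hypothesis~\eqref{eq:extension limit condition} together with $|u|=|u|^{\vee}$ $\mu$-a.e.\ gives $r>0$ with $|u_0|<\delta$ $\mu$-a.e.\ on $B(x,r)$, whence $u_0^{\vee}(x)\le\delta$ and $u_0^{\wedge}(x)\ge-\delta$; letting $\delta\to 0$ gives $u_0^{\wedge}(x)=u_0^{\vee}(x)=0$. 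By the definition of $\BV_0(W,\Om)$ this shows $u\in\BV_0(W,\Om)$. I do not expect a genuine obstacle here: the only delicate bookkeeping is between the pointwise representatives $u^{\wedge},u^{\vee}$ and the $\mu$-a.e.\ defined $u$ — concretely, passing from~\eqref{eq:extension limit condition} to the smallness of $u$ itself (not merely of a representative) on a whole ball $W\cap B(x,r)$ — which the Lebesgue differentiation theorem supplies.
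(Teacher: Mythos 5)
Your proof is correct and follows essentially the same route as the paper: the paper's cutoffs $u_\delta:=(u-\delta)_+-(u+\delta)_-$ are exactly your $T_\eps\circ u$, and both arguments then invoke Lemma~\ref{lem:extension of compactly supported function} for the truncations, pass to the limit using lower semicontinuity of the total variation under $L^1$-convergence, and read off $u^{\wedge}=u^{\vee}=0$ on $\Om\setminus W$ from \eqref{eq:extension limit condition}. Your verification that $\supp_{\Om}u_\eps\subset W$ is a detail the paper leaves implicit, and it is carried out correctly.
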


\begin{proof}
First assume that $\supp_X u\subset W$.
Then $u\in\BV_0(W,\Om)$ with $\Vert Du\Vert(W)= \Vert Du\Vert(\Om)$
by Lemma \ref{lem:extension of compactly supported function}.

In the general case, note that for the functions
\[
u_{\delta}:=(u-\delta)_+-(u+\delta)_-,\quad \delta>0,
\]
we have $\supp_X u_{\delta}\subset W$.
Thus, understanding $u$ to be zero extended to $\Om\setminus W$,
we have $u_{\delta}\to u$ in $L^1(\Om)$ and then
\[
\Vert Du\Vert(\Om)\le \liminf_{i\to\infty}\Vert Du_{\delta}\Vert(\Om)
= \liminf_{i\to\infty}\Vert Du_{\delta}\Vert(W)\le \Vert Du\Vert(W),
\]
so that $u\in\BV(\Om)$. By \eqref{eq:extension limit condition},
clearly $u^{\wedge}=u^{\vee}=0$ on $\Om\setminus W$, and so $u\in\BV_0(W,\Om)$.
\end{proof}

Now we prove our main approximation result, which we first give in the following
form.

\begin{theorem}\label{thm:approximation}
Let $u\in\BV(\Om)$ and let $\eps>0$.
Then there exists $w\in\SBV(\Om)$ and an open set
$W\subset \Om$ such that $w\ge u$ on $\Om$, $w=u$ on $\Om\setminus W$,
$\Vert w-u\Vert_{L^1(\Om)}<\eps$,
$\Vert w-u\Vert_{L^{\infty}(\Om)}<\eps$,
$\mathcal H(S_w\setminus S_u)=0$,
$\mu(W)<\eps$, $\Vert Du\Vert(W)< \Vert Du\Vert^c(\Om)+\eps$,
$\Vert D(w-u)\Vert(\Om\setminus W)=0$,
$\Vert Dw\Vert(W)<\Vert Du\Vert(W)+\eps$,
and
\begin{equation}\label{eq:w-u limit}
\lim_{W\ni y\to x}|w-u|^{\vee}(y)=0\quad\textrm{for all }x\in\partial W.
\end{equation}
\end{theorem}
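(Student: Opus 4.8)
The starting point is the decomposition \eqref{eq:variation measure decomposition}: the Cantor measure $\Vert Du\Vert^c$ is singular with respect to $\mu$ and does not charge the $\sigma$-finite set $S_u$, and it is a finite Radon measure on $\Om$. Since $\Vert Du\Vert^c$ is singular with respect to $\mu$ and absolutely continuous with respect to $\Vert Du\Vert$ which is itself absolutely continuous with respect to $\mathcal H$, I can use inner/outer regularity of the Radon measures $\Vert Du\Vert$ and $\mu$ to choose an open set $W\subset\Om$ with $\mu(W)<\eps$ and $\Vert Du\Vert(W)<\Vert Du\Vert^c(\Om)+\eps$. (Concretely: pick a compact $K$ with $\Vert Du\Vert^c(\Om\setminus K)$ small, note $\mu(K)=0$ since $\Vert Du\Vert^c\perp\mu$, then fatten $K$ to an open $W$ using outer regularity of both $\mu$ and $\Vert Du\Vert$.) One should also arrange $\overline W\subset\Om$ or at least that $W$ is well inside $\Om$, which is harmless since we can intersect with an exhaustion.

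**Next I would apply the bounded-function approximation on $W$.** The restriction $u|_W\in\BV(W)$, so Proposition \ref{prop:uniform approximation for bounded functions} (applied with $\Om$ replaced by the open set $W$, and with a parameter $\eps'$ chosen much smaller than $\eps$) yields $\widetilde v\in\BV(W)$ with $\Vert \widetilde v-u\Vert_{L^\infty(W)}\le\eps'$, $\Vert D\widetilde v\Vert(W)\le\Vert Du\Vert(W)+\eps'$, $\Vert D\widetilde v\Vert^c(W)=0$, and $\mathcal H(S_{\widetilde v}\setminus S_u)=0$. Now I want to glue $\widetilde v$ back to $u$ across $\partial W$ without creating a jump there and without destroying the $\SBV$/no-new-jumps properties, while also getting the monotonicity $w\ge u$ and the boundary-limit condition \eqref{eq:w-u limit}. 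The natural mechanism is Lemma \ref{lem:BV pasting lemma}: it is exactly designed to take a sequence converging uniformly and strictly to a $\BV$ function and produce a single function $v\ge 0$ (here applied to $\widetilde v-u$ on $W$) with $\Vert v\Vert_{L^1(W)}$ and $\Vert v\Vert_{L^\infty(W)}$ small, $\Vert Dv\Vert(W)$ controlled, $\lim_{W\ni y\to x}|v|^\vee(y)=0$ for all $x\in\partial W$, $S_v\subset\bigcup_i S_{u_i}$, and $\Vert Dv\Vert^c(W)=0$ when the $u_i$'s are $\SBV$. Feeding Lemma \ref{lem:BV pasting lemma} a constant sequence $u_i=\widetilde v-u$ (or, more honestly, the approximating sequence to $\widetilde v-u$ guaranteeing uniform and strict convergence, together with $S_{u_i}\subset S_u$ up to $\mathcal H$-null sets) gives a corrected difference function $\varphi\ge 0$ on $W$ with all these properties.

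**The gluing step.** Set $w:=u$ on $\Om\setminus W$ and $w:=u+\varphi$ on $W$. Then $w\ge u$ everywhere, $w=u$ on $\Om\setminus W$, and the $L^1$ and $L^\infty$ closeness on $\Om$ follow from the corresponding estimates for $\varphi$ on $W$. By Lemma \ref{lem:extension BV function in quasiopen set} applied to $\varphi\in\BV(W)$ (which satisfies the hypothesis \eqref{eq:extension limit condition} precisely because Lemma \ref{lem:BV pasting lemma} produces the boundary-limit condition), the zero extension of $\varphi$ lies in $\BV_0(W,\Om)$, so $\Vert D\varphi\Vert(\Om)=\Vert D\varphi\Vert(W)$ and in particular $\Vert D(w-u)\Vert(\Om\setminus W)=0$. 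Hence $w=u+\varphi\in\BV(\Om)$ with $\Vert Dw\Vert(W)=\Vert D(u+\varphi)\Vert(W)\le\Vert Du\Vert(W)+\Vert D\varphi\Vert(W)<\Vert Du\Vert(W)+\eps$ after choosing $\eps'$ small. For the Cantor part: $\Vert Dw\Vert^c(\Om\setminus W)=\Vert Du\Vert^c(\Om\setminus W)$ which is small but not zero — so this is the subtle point. Here I would not use the crude splitting off of $W$; instead, observe that outside $W$ we have $w=u$, and the issue is only that $\Vert Du\Vert^c$ may be nonzero off $W$. The fix is to re-choose $W$ at the very start so that $W$ contains $\Vert Du\Vert^c$-essentially all of the Cantor part AND so that $w$ is genuinely $\SBV$ on all of $\Om$: on $\Om\setminus W$, $w=u$ but we need $\Vert Dw\Vert^c(\Om\setminus W)=0$, which forces $\Vert Du\Vert^c(\Om\setminus\overline W)=0$, i.e. $\overline W$ (or $W$) must be a $\Vert Du\Vert^c$-full open neighborhood of the Cantor set. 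This is achievable: $\Vert Du\Vert^c$ is concentrated on a Borel $\mu$-null set $N$; cover $N$ by an open $W$ with $\mu(W)<\eps$ and $\Vert Du\Vert(W)<\Vert Du\Vert^c(\Om)+\eps$, and then $\Vert Du\Vert^c(\Om\setminus W)=\Vert Du\Vert^c(N\setminus W)=0$. Then $\Vert Dw\Vert^c(\Om\setminus W)=\Vert Du\Vert^c(\Om\setminus W)=0$, and $\Vert Dw\Vert^c(W)\le\Vert Du\Vert^c(W)+\Vert D\varphi\Vert^c(W)$; the first term must be killed by noting $\Vert D\widetilde v\Vert^c(W)=0$ and rerouting the estimate through $w|_W=\widetilde v+\varphi$ rather than $u+\varphi$ — i.e. glue $\widetilde v$, not $u+\varphi$, inside $W$.

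**Main obstacle.** The genuinely delicate bookkeeping is making the three requirements $w\ge u$, \ $w|_{\Om\setminus W}=u$ with no jump across $\partial W$, and $w\in\SBV(\Om)$ compatible simultaneously: inside $W$ we want $w$ close to the $\SBV$ function $\widetilde v$ from Proposition \ref{prop:uniform approximation for bounded functions} (to kill the Cantor part there), but on $\partial W$ we want $w$ to match $u$ continuously in the sense of \eqref{eq:w-u limit}. The resolution is to define the correction as $\varphi:=(\text{pasted version of }\widetilde v-u)\ge 0$ via Lemma \ref{lem:BV pasting lemma}, set $w=u+\varphi$, and verify that $w|_W$ differs from $\widetilde v$ only by a $\BV_0(W,\Om)$ term whose Cantor part vanishes — so $\Vert Dw\Vert^c(W)\le\Vert D\widetilde v\Vert^c(W)+\Vert D(\varphi-(\widetilde v-u))\Vert^c(W)$, and both summands are zero, the second because that difference lies in $\BV_0(W,\Om)$ and Lemma \ref{lem:BV pasting lemma} guarantees the pasting introduces no Cantor part. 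Everything else (the $L^1$, $L^\infty$, $\mu(W)$, $\Vert Du\Vert(W)$, $\mathcal H(S_w\setminus S_u)$ bounds) is then an assembly of the already-established estimates, with the $\eps$'s chosen in the order $W$ first, then $\eps'\ll\eps$ in Proposition \ref{prop:uniform approximation for bounded functions} and Lemma \ref{lem:BV pasting lemma}.
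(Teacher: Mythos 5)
Your overall architecture (choose an open $W$ with $\mu(W)<\eps$ containing a $\mu$-null Borel carrier of $\Vert Du\Vert^c$, so that $\Vert Du\Vert(W)<\Vert Du\Vert^c(\Om)+\eps$ and $\Vert Du\Vert^c(\Om\setminus W)=0$; approximate inside $W$; glue back to $u$ across $\partial W$) is exactly the paper's. The genuine gap is in how you invoke Lemma \ref{lem:BV pasting lemma}. That lemma must be fed a \emph{sequence} $(u_i)\subset\BV(W)$ converging to the target $u|_W$ uniformly with $\Vert Du_i\Vert(W)\to\Vert Du\Vert(W)$, each $u_i$ Cantor-free with $\mathcal H(S_{u_i}\setminus S_u)=0$; such a sequence is obtained by applying Proposition \ref{prop:uniform approximation for bounded functions} repeatedly with parameters $\eps_i\to 0$, not once. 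The boundary condition $\lim_{W\ni y\to x}|v-u|^\vee(y)=0$ in the lemma's conclusion comes precisely from the fact that the pasted function $v=\sum_j(\eta_j-\eta_{j-1})u_j$ uses approximants that are forced (by passing to a subsequence) to be within $2^{-j}\eps$ of $u$ in $L^\infty$ on the annulus $\Om_j\setminus\Om_{j-1}$, i.e.\ increasingly close to $u$ as one approaches $\partial W$. Your ``constant sequence $u_i=\widetilde v-u$'' version is vacuous: the lemma then returns $\varphi=\widetilde v-u$ itself, and $|w-u|^\vee=|\widetilde v-u|^\vee$ is merely bounded by $\eps'$ near $\partial W$ with no decay, so \eqref{eq:w-u limit} fails; worse, the resulting glue can create a genuine jump of size up to $\eps'$ along $\partial W$, which may have infinite $\mathcal H$-measure, destroying $\Vert D(w-u)\Vert(\Om\setminus W)=0$ and even $w\in\BV(\Om)$.

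The ``more honest'' variant you sketch --- pasting a sequence that approximates $\widetilde v-u$ --- does not repair this, for two reasons. First, the lemma's boundary conclusion is relative to the \emph{target} of the sequence, so you would only get $|\varphi-(\widetilde v-u)|^\vee\to 0$ at $\partial W$, again not $|\varphi|^\vee\to 0$. Second, the lemma kills the Cantor part of the output only when the sequence members are Cantor-free; any sequence built around $\widetilde v-u$ inherits the Cantor part of $u$ on $W$, so $\Vert D\varphi\Vert^c(W)$ need not vanish and your final claim that ``both summands are zero'' is unjustified (membership in $\BV_0(W,\Om)$ says nothing about the Cantor part). The correct invocation is the paper's: apply Lemma \ref{lem:BV pasting lemma} on $W$ with target $u$ and the sequence $(u_i)$ from Proposition \ref{prop:uniform approximation for bounded functions}, obtain $v\ge u$ directly (with $\Vert Dv\Vert^c(W)=0$, $S_v\subset\bigcup_iS_{u_i}$, and the boundary decay), and set $w=v$ on $W$, $w=u$ on $\Om\setminus W$; everything else in your write-up then assembles correctly.
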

\begin{proof}
By the decomposition \eqref{eq:variation measure decomposition},
we find a Borel set $A\subset\Om$ such that $\mu(A)=0$ and
$\Vert Du\Vert^c(A)=\Vert Du\Vert(A)=\Vert Du\Vert^c(\Om)$.
Take an open set $W\subset\Om$ such that $W\supset A$ and
$\Vert Du\Vert(W)<\Vert Du\Vert^c(\Om)+\eps$, and $\mu(W)<\eps$.
By Proposition \ref{prop:uniform approximation for bounded functions}
we find a sequence $(u_i)\subset \BV(W)$ such that
$\Vert u_i-u\Vert_{L^{\infty}(W)}\to 0$,
\[
\lim_{i\to\infty}\Vert Du_i\Vert(W)= \Vert Du\Vert(W),
\]
$\Vert D u_i\Vert^c(W)=0$, and $\mathcal H(W\cap S_{u_i}\setminus S_u)=0$.
Then by Lemma \ref{lem:BV pasting lemma}
we find a function $v\in \BV(W)$ such that $v\ge u$ on $W$, $\Vert v-u\Vert_{L^1(W)}<\eps$,
	$\Vert v-u\Vert_{L^{\infty}(W)}<\eps$,
	\[
	\Vert Dv\Vert(W)<\Vert Du\Vert(W)+\eps,
	\]
	\begin{equation}\label{eq:v-u limit}
	\lim_{W\ni y\to x}|v-u|^{\vee}(y)=0\quad\textrm{for all }x\in \partial W,
	\end{equation}
	$\Vert D v\Vert^c(W)=0$, and $\mathcal H(W\cap S_v\setminus S_u)=0$.
Let
\[
w:=
\begin{cases}
u &\textrm{ on }\Om\setminus W,\\
v &\textrm{ on }W.
\end{cases}
\]
Clearly $w\ge u$, $\Vert w-u\Vert_{L^1(\Om)}<\eps$, and
	$\Vert w-u\Vert_{L^{\infty}(\Om)}<\eps$.
By Lemma \ref{lem:extension BV function in quasiopen set} and \eqref{eq:v-u limit},
$w-u\in\BV(\Om)$ and then $w\in\BV(\Om)$.
Equation \eqref{eq:v-u limit} gives \eqref{eq:w-u limit}.
By \eqref{eq:w-u limit}, $\partial^*\{w-u>t\}\setminus W=\emptyset$
for all $t\neq 0$.
Thus by the coarea formula \eqref{eq:coarea} and \eqref{eq:def of theta},
\begin{equation}\label{eq:w and u close in BV energy norm}
\begin{split}
\Vert D(w-u)\Vert(\Om\setminus W)
&=\int_{-\infty}^{\infty}P(\{w-u>t\},\Om\setminus W)\,dt\\
&\le C_d\int_{-\infty}^{\infty}\mathcal H(\partial^*\{w-u>t\}\setminus W)\,dt\\
&=0.
\end{split}
\end{equation}
Also
\[
\Vert Dw\Vert(W)=\Vert Dv\Vert(W)<\Vert Du\Vert(W)+\eps
\]
and
\begin{align*}
\Vert Dw\Vert^c(\Om)
&=\Vert Dw\Vert^c(W)+\Vert Dw\Vert^c(\Om\setminus W)\\
&=\Vert Dv\Vert^c(W)+\Vert Du\Vert^c(\Om\setminus W)\quad\textrm{by }
\eqref{eq:w and u close in BV energy norm}\\
&=0.
\end{align*}
Equation \eqref{eq:w-u limit} also implies
that $w^{\wedge}=u^{\wedge}$ and $w^{\vee}=u^{\vee}$ on $\Om\setminus W$, so that
$S_w\setminus W=S_u\setminus W$. We have
$\mathcal H(W\cap S_v\setminus S_u)=0$ and so
also $\mathcal H(W\cap S_w\setminus S_u)=0$.
We conclude that $\mathcal H(S_w\setminus S_u)=0$.
\end{proof}

Next we show the sharpness of the condition
$\Vert D(w-u)\Vert(\Om)<2\Vert Du\Vert^c(\Om)+\eps$;
in particular this demonstrates the fact that
it is generally impossible to approximate
$\BV$ functions by $\SBV$ functions in the $\BV$ \emph{norm}.

\begin{example}\label{ex:sharpness of BV norm closeness}
	Let $u\in\BV(\Om)$ and let
	$(u_i)\subset\SBV(\Om)$ such that $u_i\to u$ in $L^1(\Om)$.
	We show that necessarily
	\[
	\liminf_{i\to\infty}\Vert D(u_i-u)\Vert(\Om)\ge 2\Vert Du\Vert^c(\Om).
	\]
	Fix $\eps>0$.
	We find a Borel set $F\subset \Om$ such that $\mu(F)=0$ and
	$\Vert Du\Vert^c(F)=\Vert Du\Vert(F)=\Vert Du\Vert^c(\Om)$.
	We also find an open set $W\subset \Om$ such that $W\supset F$ and
	$\Vert Du\Vert(W)< \Vert Du\Vert(F)+\eps$.
	Let $S:=\bigcup_{i=1}^{\infty}S_{u_i}$ and $H:=F\setminus S$.
	Since $S$ is $\sigma$-finite with respect to $\mathcal H$, $\Vert Du\Vert^c(S)=0$.
	Then
	\begin{align*}
	\Vert D(u-u_i)\Vert(H)
	&\ge \Vert Du\Vert(H)-\Vert Du_i\Vert(H)\\
	&=\Vert Du\Vert(F)-\Vert Du_i\Vert(H)\\
	&=\Vert Du\Vert^c(\Om)-0
	\end{align*}
	for all $i\in\N$. Moreover,
	\begin{align*}
	\Vert D(u-u_i)\Vert(W\setminus H)
	&\ge \Vert Du_i\Vert(W\setminus H)-\Vert Du\Vert(W\setminus H)\\
	&\ge\Vert Du_i\Vert(W\setminus H)-\eps\\
	&=\Vert Du_i\Vert(W)-\eps
	\end{align*}
	for all $i\in\N$, and thus
	\begin{align*}
	\liminf_{i\to\infty}\Vert D(u-u_i)\Vert(W\setminus H)
	&\ge \liminf_{i\to\infty}\Vert Du_i\Vert(W)-\eps\\
	&\ge \Vert Du\Vert(W)-\eps\quad \textrm{since }u_i\to u\textrm{ in }L^1(\Om)\\
	&\ge \Vert Du\Vert^c(\Om)-\eps.
	\end{align*}
	In total, we get
	\begin{align*}
	\liminf_{i\to\infty}\Vert D(u-u_i)\Vert(\Om)
	&\ge\liminf_{i\to\infty}\Vert D(u-u_i)\Vert(W\setminus H)
	+\liminf_{i\to\infty}\Vert D(u-u_i)\Vert(H)\\
	&\ge 2\Vert Du\Vert^c(\Om)-\eps,
	\end{align*}
	and so we have the result.
\end{example}

Now we get the following corollary,
which in particular implies Theorem \ref{thm:main}.

\begin{corollary}\label{cor:approximation result}
Let $u\in\BV(\Om)$. Then there exists a
sequence $(u_i)\subset \SBV(\Om)$
such that
\begin{itemize}
\item $u_i\to u$ in $L^1(\Om)$ and $\Vert Du_i\Vert(\Om)\to \Vert Du\Vert(\Om)$,
\item $\lim_{i\to\infty}\Vert D(u_i-u)\Vert(\Om)= 2\Vert Du\Vert^c(\Om)$,
\item $\limsup_{i\to\infty}\Vert Du\Vert(\{|u_i-u|^{\vee}\neq 0\})
\le \Vert Du\Vert^c(\Om)$,
$\lim_{i\to\infty}\mu(\{|u_i-u|^{\vee}\neq 0\})=0$,
\item $u_i\ge u$ and $u_i\to u$ uniformly in $\Om$,
\item $\mathcal H(S_{u_i}\setminus S_u)=0$ for all $i\in\N$, and
\item $\lim_{\Om\ni y\to x}|u_i-u|^{\vee}(y)=0$
for all $x\in\partial\Om$.
\end{itemize}
\end{corollary}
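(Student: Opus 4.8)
The plan is to read the sequence off directly from Theorem \ref{thm:approximation}, which already does all the heavy lifting. For each $i\in\N$ I would apply that theorem with $\eps=1/i$, obtaining $u_i:=w\in\SBV(\Om)$ together with an open set $W_i:=W\subset\Om$ enjoying all the listed properties with $\eps=1/i$: in particular $u_i\ge u$, $\Vert u_i-u\Vert_{L^1(\Om)}<1/i$, $\Vert u_i-u\Vert_{L^\infty(\Om)}<1/i$, $\mathcal H(S_{u_i}\setminus S_u)=0$, $\mu(W_i)<1/i$, $\Vert Du\Vert(W_i)<\Vert Du\Vert^c(\Om)+1/i$, $\Vert Dw\Vert(W_i)<\Vert Du\Vert(W_i)+1/i$, $\Vert D(u_i-u)\Vert(\Om\setminus W_i)=0$, and \eqref{eq:w-u limit} holds for $W_i$. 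The $L^1$- and uniform convergence, the monotonicity $u_i\ge u$, and the jump-set condition are then immediate, so the remaining work is to verify the variation-measure statements and the boundary behaviour of $|u_i-u|^\vee$.

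The only genuinely non-bookkeeping point is the inclusion $\{|u_i-u|^\vee\neq 0\}\subset W_i$. Since $u_i\ge u$ we have $|u_i-u|^\vee=(u_i-u)^\vee\ge 0$, so this set equals $\{(u_i-u)^\vee>0\}$; as $u_i=u$ on $\Om\setminus W_i$ the function $(u_i-u)^\vee$ vanishes on $\Om\setminus\overline{W_i}$, so it suffices to show $(u_i-u)^\vee(x)=0$ for every $x\in\partial W_i\cap\Om$. Fixing such an $x$ and $\delta>0$, equation \eqref{eq:w-u limit} gives $r_0>0$ with $B(x,r_0)\subset\Om$ and $(u_i-u)^\vee<\delta$ on $W_i\cap B(x,r_0)$; since $(u_i-u)^\vee\ge u_i-u$ at every Lebesgue point and $\{u_i-u>\delta\}\subset W_i$ up to a $\mu$-null set, this forces $\mu(\{u_i-u>\delta\}\cap B(x,r))=0$ for all $r<r_0$, hence $\{u_i-u>\delta\}$ has zero density at $x$ and so $(u_i-u)^\vee(x)\le\delta$; letting $\delta\to 0$ proves the claim.

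Granting this, I would finish by elementary manipulations, using that $\Vert Du_i\Vert$, $\Vert Du\Vert$ and $\Vert D(u_i-u)\Vert$ are Radon measures on $\Om$, hence additive over the Borel sets $W_i$ and $\Om\setminus W_i$, and that the subadditivity \eqref{eq:BV functions form vector space} extends from open sets to Borel sets. On $\Om\setminus W_i$ the identity $\Vert D(u_i-u)\Vert(\Om\setminus W_i)=0$ gives $\Vert Du_i\Vert(\Om\setminus W_i)=\Vert Du\Vert(\Om\setminus W_i)$, so $\Vert Du_i\Vert(\Om)<\Vert Du\Vert(\Om)+1/i$, and combining with the lower semicontinuity of the total variation under $L^1$-convergence yields $\Vert Du_i\Vert(\Om)\to\Vert Du\Vert(\Om)$. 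Likewise $\Vert D(u_i-u)\Vert(\Om)=\Vert D(u_i-u)\Vert(W_i)\le 2\Vert Du\Vert(W_i)+1/i$, so $\limsup_i\Vert D(u_i-u)\Vert(\Om)\le 2\Vert Du\Vert^c(\Om)$, while Example \ref{ex:sharpness of BV norm closeness} supplies the matching lower bound $\liminf_i\Vert D(u_i-u)\Vert(\Om)\ge 2\Vert Du\Vert^c(\Om)$; the two density statements follow from $\{|u_i-u|^\vee\neq 0\}\subset W_i$ together with $\Vert Du\Vert(W_i)<\Vert Du\Vert^c(\Om)+1/i$ and $\mu(W_i)<1/i$. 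Finally, for $x\in\partial\Om$ either $x\notin\overline{W_i}$, so $|u_i-u|^\vee$ vanishes on a neighbourhood of $x$ intersected with $\Om$, or $x\in\overline{W_i}\setminus W_i=\partial W_i$, in which case \eqref{eq:w-u limit} together with the vanishing of $|u_i-u|^\vee$ off $W_i$ gives $\lim_{\Om\ni y\to x}|u_i-u|^\vee(y)=0$. The main (modest) obstacle is therefore just establishing the inclusion $\{|u_i-u|^\vee\neq 0\}\subset W_i$ and being careful when passing the $\BV$ inequalities from open sets to the non-open set $\Om\setminus W_i$; everything else is routine.
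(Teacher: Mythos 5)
Your proposal is correct and follows exactly the route of the paper: the paper's own proof is just the remark that the corollary ``follows almost directly'' from Theorem \ref{thm:approximation} (applied with $\eps=1/i$) and Example \ref{ex:sharpness of BV norm closeness}, with the third and last conditions read off from \eqref{eq:w-u limit} and the estimates on $W$. Your additional details --- the inclusion $\{|u_i-u|^{\vee}\neq 0\}\subset W_i$ via the density argument at $\partial W_i$, and the passage of the measure identities to the Borel set $\Om\setminus W_i$ by outer regularity --- are precisely the bookkeeping the paper leaves implicit, and they check out.
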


\begin{proof}
This follows almost directly from Theorem \ref{thm:approximation} and
Example \ref{ex:sharpness of BV norm closeness}.
The third condition follows from \eqref{eq:w-u limit}
and the estimates $\mu(W)<\eps$ and $\Vert Du\Vert(W)< \Vert Du\Vert^c(\Om)+\eps$
given in Theorem \ref{thm:approximation}.
The last condition also follows from \eqref{eq:w-u limit}.
\end{proof}

Note that the first condition says that the $u_i$'s converge to $u$ strictly,
the second condition
describes closeness in the $\BV$ norm, and the third condition implies that
\[
\limsup_{i\to\infty}\Vert Du\Vert(\{u_i^{\wedge}\neq u^{\wedge}\}\cup
\{u_i^{\vee}\neq u^{\vee}\})\le \Vert Du\Vert^c(\Om),
\]
so it describes approximation in the Lusin sense. The last condition
expresses the fact that $u_i$ and $u$ have the same ``boundary values''.

In closing, let us consider a few new capacities defined similarly as in
Definition \ref{def:variational capacities}.

\begin{definition}
	Let $A\subset D\subset \Om\subset X$ be nonempty sets
	with $\Om$ open.
	We define the variational $\SBV$-capacity by
	\[
	\rcapa_{\SBV}(A,D,\Om):=\inf \Vert Du\Vert(\Om),
	\]
	where the infimum is taken over functions $u\in \BV_0(D,\Om)\cap \SBV(\Om)$
	such that $u^{\wedge}\ge 1$ $\mathcal H$-a.e. on $A$.
	
	We define the variational \emph{diffuse} $\BV$-capacity by
	\[
	\rcapa_{\DBV}(A,D,\Om):=\inf \Vert Du\Vert(\Om),
	\]
	where the infimum is taken over functions $u\in \BV_0(D,\Om)$
	such that $\mathcal H(S_u)=0$ and
	$u^{\wedge}\ge 1$ $\mathcal H$-a.e. on $A$.
\end{definition}
	
One can also replace $\Om$ by a more general set, but we choose to consider
the above simpler case here.	
	
	\begin{corollary}
	We have
	\[
	\rcapa_{\SBV}(A,D,\Om)=\rcapa_{\BV}(A,D,\Om)\quad\textrm{and}\quad
	\rcapa_{\DBV}(A,D,\Om)=\rcapa_{1}(A,D,\Om).
	\]
	\end{corollary}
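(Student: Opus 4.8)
The plan is to read off both identities from the approximation Theorem~\ref{thm:approximation}, after reducing each equality to a single nontrivial inequality.

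\emph{The easy directions.} Any test function admissible for $\rcapa_{\SBV}(A,D,\Om)$ or for $\rcapa_{\DBV}(A,D,\Om)$ is in particular admissible for $\rcapa_{\BV}(A,D,\Om)$, so restricting the class of competitors can only increase the infimum: $\rcapa_{\SBV}(A,D,\Om)\ge\rcapa_{\BV}(A,D,\Om)$ and $\rcapa_{\DBV}(A,D,\Om)\ge\rcapa_{\BV}(A,D,\Om)$. For the second identity I would also note that every $u\in N_0^{1,1}(D,\Om)$ with $u\ge1$ $1$-q.e.\ on $A$ is a legitimate competitor for $\rcapa_{\DBV}(A,D,\Om)$: it lies in $\BV_0(D,\Om)$ by \eqref{eq:Newtonian zero class contained in BV zero}, it satisfies $\mathcal H(S_u)=0$ and $u^\wedge\ge1$ $\mathcal H$-a.e.\ on $A$ by Proposition~\ref{prop:Lebesgue points for Sobolev functions open set} and \eqref{eq:null sets of Hausdorff measure and capacity}, and $\|Du\|(\Om)\le\int_\Om g_u\,d\mu$ by \eqref{eq:Sobolev subclass BV}; hence $\rcapa_{\DBV}(A,D,\Om)\le\rcapa_1(A,D,\Om)$. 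It therefore remains to show
\[
\rcapa_{\SBV}(A,D,\Om)\le\rcapa_{\BV}(A,D,\Om)\qquad\text{and}\qquad\rcapa_1(A,D,\Om)\le\rcapa_{\DBV}(A,D,\Om).
\]

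\emph{Applying the approximation theorem.} Fix $\eps>0$ and an admissible test function $u$ for the right-hand capacity with $\|Du\|(\Om)$ within $\eps$ of it; by truncation we may take $0\le u\le1$ and $u^\wedge=1$ $\mathcal H$-a.e.\ on $A$. Apply Theorem~\ref{thm:approximation} to $u$ to get $w\in\SBV(\Om)$ and an open $W\subset\Om$ with $w\ge u$, $w=u$ on $\Om\setminus W$, $\|D(w-u)\|(\Om\setminus W)=0$, $\|Dw\|(W)<\|Du\|(W)+\eps$, $\mathcal H(S_w\setminus S_u)=0$, $\|Du\|(W)<\|Du\|^c(\Om)+\eps$, and the boundary-value condition \eqref{eq:w-u limit}. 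Then $\|Dw\|(\Om)\le\|Du\|(\Om)+\eps$ (use $w=u$ and $\|D(w-u)\|=0$ off $W$ together with \eqref{eq:BV functions form vector space}), and $w^\wedge\ge u^\wedge\ge1$ $\mathcal H$-a.e.\ on $A$ because $w\ge u$. In the $\rcapa_{\DBV}$ case one has moreover $\mathcal H(S_w)=0$ (since $\mathcal H(S_u)=0$), hence $\|Dw\|^j(\Om)=0$; together with $\|Dw\|^c(\Om)=0$ this makes $\|Dw\|$ absolutely continuous with respect to $\mu$, so $w$ coincides $\mu$-a.e.\ with a function in $N^{1,1}(\Om)$ whose minimal upper gradient satisfies $\int_\Om g_w\,d\mu=\|Dw\|(\Om)$, and $w$ becomes admissible for $\rcapa_1$ once $w\in N_0^{1,1}(D,\Om)$ and $w\ge1$ $1$-q.e.\ on $A$ — both of which follow from the corresponding $\BV_0$ and $\mathcal H$-a.e.\ statements via Proposition~\ref{prop:Lebesgue points for Sobolev functions open set}, \eqref{eq:null sets of Hausdorff measure and capacity} and \eqref{eq:quasieverywhere equivalence classes}.

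\emph{The main obstacle.} The delicate point in both cases is to verify $w\in\BV_0(D,\Om)$: a priori the open set $W$ is not contained in $D$, and $w\ge u$ may fail to vanish on $W\setminus D$ in the $\mathcal H$-a.e.\ precise sense, whereas $\BV_0(D,\Om)$ demands $w^\wedge=w^\vee=0$ $\mathcal H$-a.e.\ on $\Om\setminus D$ exactly, not merely approximately. I would first record, from the coarea formula \eqref{eq:coarea} and \eqref{eq:def of theta}, that for $u\in\BV_0(D,\Om)$ one has $P(\{u>t\},\Om\setminus D)=0$ for every $t\ne0$ (because $\mathcal H$-a.e.\ point of $\Om\setminus D$ lies in the measure-theoretic interior or exterior of $\{u>t\}$), hence $\|Du\|(\Om\setminus D)=0$; thus the Cantor part of $\|Du\|$ is carried by a $\mu$-negligible subset $A_0$ of $D$, which one arranges to lie in the $1$-fine interior of $D$, a $1$-quasiopen set by Theorem~\ref{thm:finely open is quasiopen and vice versa}. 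One then encloses $A_0$ in the open set $W$ of Theorem~\ref{thm:approximation} so that $W$ exceeds $D$ only by an open set $G$ of arbitrarily small $\capa_1$; since $w-u\in\BV_0(W,\Om)$ by \eqref{eq:w-u limit} and the vanishing of $\|D(w-u)\|$ off $W$, this places $w$ in $\BV_0(D\cup G,\Om)$. From here a cutoff-and-iteration argument parallel to the proof of Theorem~\ref{thm:1capacity and BVcapacity are equal} — multiplying by cutoff functions supplied by Lemma~\ref{lem:capacity and Newtonian function and bigger set} and Lemma~\ref{lem:cutoff function lemma} to excise the excess at the price of shrinking $A$ to $A\setminus G$, then summing over a sequence of such competitors using the subadditivity of the capacities and the $\capa_1$-nullity of the residual set — turns the estimates into bona fide admissible competitors for $\rcapa_{\SBV}(A,D,\Om)$ and $\rcapa_1(A,D,\Om)$; letting $\eps\to0$ yields the two remaining inequalities. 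The only other nonroutine ingredient, in the $\rcapa_{\DBV}$ case, is the identification of a $\BV$ function with $\mu$-absolutely continuous variation measure with a Newton--Sobolev function of equal energy, which is standard and may be cited.
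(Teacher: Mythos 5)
Your reduction to the two nontrivial inequalities and your handling of the easy inclusions are fine, but the core of your argument diverges from the paper's and, as written, does not close. The ``main obstacle'' you identify (that $w$ from Theorem \ref{thm:approximation} need not lie in $\BV_0(D,\Om)$ because $W\not\subset D$) is real, but the paper dissolves it with a one-line truncation that your proposal misses: since $\Vert w-u\Vert_{L^\infty(\Om)}<\eps$ and $u^\wedge=u^\vee=0$ $\mathcal H$-a.e.\ on $\Om\setminus D$, one has $w^\vee\le\eps$ and $w^\wedge\ge-\eps$ $\mathcal H$-a.e.\ there, so $v:=(w-\eps)_+/(1-2\eps)$ is already admissible for $\rcapa_{\SBV}(A,D,\Om)$ (it vanishes precisely on $\Om\setminus D$, satisfies $v^\wedge\ge1$ $\mathcal H$-a.e.\ on $A$ because $w\ge u$, remains $\SBV$ under truncation, and costs only a factor $(1-2\eps)^{-1}$ in energy). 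Your replacement --- enclosing the Cantor-carrying set in the $1$-fine interior of $D$ and then running a cutoff-and-iteration scheme --- is not carried out and rests on unjustified steps: $D$ is an arbitrary set in Definition 5.8, so there is no reason the Cantor part of $\Vert Du\Vert$ should be carried by $\fint D$ (it is carried by a $\mu$-null subset of $D$, which can meet $\partial^1 D$); and the iteration of Theorem \ref{thm:1capacity and BVcapacity are equal} uses countable subadditivity of $\rcapa_1$, whose analogue for $\rcapa_{\SBV}$ (stability of the $\SBV$ class and of the jump condition under the sup of countably many competitors) you would have to prove.

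The second identity has a more serious gap. You propose to pass from $w\in\SBV(\Om)$ with $\mathcal H(S_w)=0$ and $\Vert Dw\Vert^c(\Om)=0$ to a Newton--Sobolev function with $\int_\Om g_w\,d\mu=\Vert Dw\Vert(\Om)$, calling this standard. For $p=1$ in this generality it is false: the relaxation defining $\Vert Dw\Vert$ only gives $\Vert Dw\Vert(\Om)\le\int_\Om g_w\,d\mu$ (\eqref{eq:Sobolev subclass BV}), the reverse inequality can fail strictly (this is exactly why $\rcapa_{\BV}(A,D)<\rcapa_1(A,D)$ is possible, cf.\ the reference to \cite[Example 4.27]{L-ZB} at the end of the paper), and the available ``$\Vert Dw\Vert\ll\mu$ implies $w\in N^{1,1}$'' statements only give $g_w\le C\,a$ with a structural constant. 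Your route therefore yields at best $\rcapa_1\le C\,\rcapa_{\DBV}$. The paper avoids this entirely: it applies Proposition \ref{prop:uniform approximation} with $\Om'=\Om\setminus S_u$ (a $1$-quasiopen set, since $\mathcal H(S_u)=0$), which \emph{directly} produces $w\in N^{1,1}(\Om')$ with $\int_{\Om'}g_w\,d\mu\le\Vert Du\Vert(\Om)+\eps$ --- the passage from BV energy to Sobolev energy being exactly the content of Theorem \ref{thm:1capacity and BVcapacity are equal} --- then extends $w$ to $N^{1,1}(\Om)$ using $\capa_1(S_u)=0$, and finishes with the same truncation $v=(w-\eps)_+/(1-2\eps)$.
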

	
	\begin{proof}
	To prove the first equality,
	we can assume that $\rcapa_{\BV}(A,D,\Om)<\infty$. Let $0<\eps<1/2$.
	Take a function $u$ that is admissible for $\rcapa_{\BV}(A,D,\Om)$
	such that $\Vert Du\Vert(\Om)<\rcapa_{\BV}(A,D,\Om)+\eps$.
	By Corollary \ref{cor:approximation result} we find a function $w\in\SBV(\Om)$
	such that $\Vert Dw\Vert(\Om)<\Vert Du\Vert(\Om)+\eps$
	and $\Vert w-u\Vert_{L^{\infty}(\Om)}<\eps$.
	Then $v:=(w-\eps)_+/(1-2\eps)$ is admissible for $\rcapa_{\SBV}(A,D,\Om)$
	and so
	\[
	\rcapa_{\SBV}(A,D,\Om)\le \Vert Dv\Vert(\Om)
	\le \frac{\Vert Du\Vert(\Om)+\eps}{1-2\eps}
	\le \frac{\rcapa_{\BV}(A,D,\Om)+2\eps}{1-2\eps}.
	\]
	Letting $\eps\to 0$, the first inequality follows.
	
	To prove the second equality,
	we can assume that $\rcapa_{\DBV}(A,D,\Om)<\infty$. Let $0<\eps<1/2$.
	Take a function $u$ that is admissible for $\rcapa_{\DBV}(A,D,\Om)$
	such that $\Vert Du\Vert(\Om)<\rcapa_{\DBV}(A,D,\Om)+\eps$.
	Apply Proposition \ref{prop:uniform approximation}
	with the choice $\Om'=\Om\setminus S_u$ to find a function
	$w\in N^{1,1}(\Om')$
	such that $\int_{\Om'}g_w\,d\mu<\Vert Du\Vert(\Om)+\eps$
	and $\Vert w-u\Vert_{L^{\infty}(\Om)}<\eps$.
	Since $\mathcal H(S_u)=0$ and thus $\capa_1(S_u)=0$,
	we have in fact
	$w\in N^{1,1}(\Om)$
	with $\int_{\Om}g_w\,d\mu<\Vert Du\Vert(\Om)+\eps$,
	see \cite[Proposition 1.48]{BB}.
	Then $v:=(w-\eps)_+/(1-2\eps)$ is admissible for $\rcapa_{1}(A,D,\Om)$
	and so
	\[
	\rcapa_{1}(A,D,\Om)\le \int_{\Om}g_v\,d\mu\le \frac{\Vert Du\Vert(\Om)+\eps}{1-2\eps}
	\le \frac{\rcapa_{\DBV}(A,D,\Om)+2\eps}{1-2\eps}.
	\]
	Letting $\eps\to 0$, the second inequality follows.
	\end{proof}
	
	Note that for the first equality we did not actually need the full strength of
	our approximation result;
	recall Example \ref{ex:simpler approximations}.
	However, with our result it is also possible to handle
	much more general energies than simply $\Vert Du\Vert(\Om)$,
	given for example by convex functionals
	of linear growth, or involving terms such as $\mathcal H(S_u)$
	(like for example in the Mumford-Shah functional).
	Generally, the implication is that the absolutely continuous and jump
	parts help to optimize energy
	--- in particular, it is possible to have $\rcapa_{\BV}(A,D)<\rcapa_{1}(A,D)$,
	see \cite[Example 4.27]{L-ZB} ---
	but the Cantor part does not.

\noindent Address:\\

\noindent University of Jyvaskyla\\
Department of Mathematics and Statistics\\
P.O. Box 35, FI-40014 University of Jyvaskyla\\
E-mail: {\tt panu.k.lahti@jyu.fi}


\begin{thebibliography}{99}

\bibitem{ADC}M. Amar and V. De Cicco,
\textit{A new approximation result for BV-functions}, 
C. R. Math. Acad. Sci. Paris 340 (2005), no. 10, 735--738. 

\bibitem{A1}L. Ambrosio,
\textit{Fine properties of sets of finite perimeter in doubling metric measure spaces},
Calculus of variations, nonsmooth analysis and related topics.
Set-Valued Anal. 10 (2002), no. 2-3, 111--128.

\bibitem{ADG}L. Ambrosio and E. De Giorgi,
\textit{New functionals in the calculus of variations},
Atti Accad. Naz. Lincei Rend. Cl. Sci. Fis. Mat. Natur. (8) 82 (1988), no. 2, 199--210 (1989).

\bibitem{AFP}L. Ambrosio, N. Fusco, and D. Pallara,
\textit{Functions of bounded variation and free discontinuity problems.}
Oxford Mathematical Monographs. The Clarendon Press, Oxford University Press, New York, 2000.

\bibitem{AMP}L. Ambrosio, M. Miranda, Jr., and D. Pallara,
\textit{Special functions of bounded variation in doubling metric measure spaces},
Calculus of variations: topics from the mathematical heritage of E. De Giorgi, 1--45,
Quad. Mat., 14, Dept. Math., Seconda Univ. Napoli, Caserta, 2004.

\bibitem{BB}A. Bj\"orn and J. Bj\"orn,
\textit{Nonlinear potential theory on metric spaces},
EMS Tracts in Mathematics, 17. European Mathematical Society (EMS), Z\"urich, 2011. xii+403 pp.

\bibitem{BB-OD}A. Bj\"orn and J. Bj\"orn,
\textit{Obstacle and Dirichlet problems on arbitrary nonopen sets in metric spaces, and fine topology},
Rev. Mat. Iberoam. 31 (2015), no. 1, 161--214.

\bibitem{BB-cap}A. Bj\"orn and J. Bj\"orn,
\textit{The variational capacity with respect to nonopen sets in metric spaces},
Potential Anal. 40 (2014), no. 1, 57--80.

\bibitem{BBM-QO}A. Bj\"orn, J. Bj\"orn, and J. Mal\'y,
\textit{Quasiopen and p-path open sets, and characterizations of quasicontinuity},
Potential Anal. 46 (2017), no. 1, 181--199. 

\bibitem{BBS}A. Bj\"orn, J. Bj\"orn, and N. Shanmugalingam,
\textit{Quasicontinuity of Newton-Sobolev functions and density of
Lipschitz functions on metric spaces}, 
Houston J. Math. 34 (2008), no. 4, 1197--1211.

\bibitem{BCP}A. Braides and V. Chiad\`o Piat,
\textit{Integral representation results for functionals defined on $\SBV(\Om;\R^m)$},
J. Math. Pures Appl. (9) 75 (1996), no. 6, 595--626. 

\bibitem{CT}G. Cortesani and R. Toader,
\textit{A density result in SBV with respect to non-isotropic energies}, 
Nonlinear Anal. 38 (1999), no. 5, Ser. B: Real World Appl., 585--604. 

\bibitem{dPFP}G. de Philippis, N Fusco, and A. Pratelli,
\textit{On the approximation of SBV functions}, 
Atti Accad. Naz. Lincei Rend. Lincei Mat. Appl. 28 (2017), no. 2, 369--413. 

\bibitem{EvaG92}L. C. Evans and R. F. Gariepy,
\textit{Measure theory and fine properties of functions},
Studies in Advanced Mathematics series, CRC Press, Boca Raton, 1992.

\bibitem{Fed}H. Federer,
\textit{Geometric measure theory},
Die Grundlehren der mathematischen Wissenschaften, Band 153 Springer-Verlag New York Inc., New York 1969 xiv+676 pp. 

\bibitem{Giu84}E. Giusti,
\textit{Minimal surfaces and functions of bounded variation},
Monographs in Mathematics, 80. Birkh\"auser Verlag, Basel, 1984. xii+240 pp.

\bibitem{Hj}P. Haj\l{}asz,
\textit{Sobolev spaces on metric-measure spaces},
Heat kernels and analysis on manifolds, graphs, and metric spaces (Paris, 2002), 173--218,
Contemp. Math., 338, Amer. Math. Soc., Providence, RI, 2003.

\bibitem{HKLS}H. Hakkarainen, R. Korte, P. Lahti, and N. Shanmugalingam,
\textit{Stability and continuity of functions of least gradient},
Anal. Geom. Metr. Spaces 3 (2015), 123--139.

\bibitem{HaKi}H. Hakkarainen and J. Kinnunen,
\textit{The BV-capacity in metric spaces},
Manuscripta Math. 132 (2010), no. 1-2, 51--73.

\bibitem{HaSh}H. Hakkarainen and N. Shanmugalingam,
\textit{Comparisons of relative BV-capacities and Sobolev capacity in metric spaces},
Nonlinear Anal. 74 (2011), no. 16, 5525--5543.

\bibitem{HKM}J. Heinonen, T. Kilpel\"ainen, and O. Martio,
\textit{Nonlinear potential theory of degenerate elliptic equations},
Unabridged republication of the 1993 original. Dover Publications, Inc., Mineola, NY, 2006. xii+404 pp.

\bibitem{HK}J. Heinonen and P. Koskela,
\textit{Quasiconformal maps in metric spaces with controlled geometry},
Acta Math. 181 (1998), no. 1, 1--61.

\bibitem{KKST2}J. Kinnunen, R. Korte, N. Shanmugalingam, and H. Tuominen,
\textit{Lebesgue points and capacities via the boxing inequality in metric spaces},
Indiana Univ. Math. J. 57 (2008), no. 1, 401--430. 

\bibitem{KKST3}J. Kinnunen, R. Korte, N. Shanmugalingam, and H. Tuominen,
\textit{Pointwise properties of functions of bounded variation in metric spaces},
Rev. Mat. Complut. 27 (2014), no. 1, 41--67.

\bibitem{KKST-DG}J. Kinnunen, R. Korte, N. Shanmugalingam, and H. Tuominen,
\textit{The De Giorgi measure and an obstacle problem related to minimal surfaces in metric spaces},
J. Math. Pures Appl. (9) 93 (2010), no. 6, 599--622.

\bibitem{KR}J. Kristensen and F. Rindler,
\textit{Piecewise affine approximations for functions of bounded variation},
Numer. Math. 132 (2016), no. 2, 329--346. 

\bibitem{L-Fed}P. Lahti,
\textit{A Federer-style characterization of sets of finite perimeter on metric spaces},
Calc. Var. Partial Differential Equations, October 2017, 56:150.

\bibitem{L-NC}P. Lahti,
\textit{A new Cartan-type property and strict quasicoverings when $p=1$ in metric spaces},
to appear in Ann. Acad. Sci. Fenn. Math.
https://arxiv.org/abs/1801.09572

\bibitem{L-FC}P. Lahti,
\textit{A notion of fine continuity for BV functions on metric spaces},
Potential Anal. 46 (2017), no. 2, 279--294.

\bibitem{L-LSC}P. Lahti,
\textit{Quasiopen sets, bounded variation and lower semicontinuity in metric spaces},
preprint 2017.
https://arxiv.org/abs/1703.04675

\bibitem{L-CK}P. Lahti,
\textit{The Choquet and Kellogg properties for the fine topology when $p=1$ in metric spaces},
preprint 2017.
https://arxiv.org/abs/1712.08027

\bibitem{L-SA}P. Lahti,
\textit{Strong approximation of sets of finite perimeter in metric spaces},
manuscripta mathematica, March 2018, Volume 155, Issue 3–4, pp 503--522.

\bibitem{L-ZB}P. Lahti,
\textit{The variational 1-capacity and BV functions with zero boundary values on metric spaces},
preprint 2017.
https://arxiv.org/abs/1708.09318

\bibitem{LaSh}P. Lahti and N. Shanmugalingam,
\textit{Fine properties and a notion of quasicontinuity for $\BV$ functions on metric spaces},
Journal de Math\'ematiques Pures et Appliqu\'ees, Volume 107, Issue 2, February
2017, Pages 150--182.

\bibitem{MZ}J. Mal\'{y} and W. Ziemer,
\textit{Fine regularity of solutions of elliptic partial differential equations},
Mathematical Surveys and Monographs, 51. American Mathematical Society, Providence, RI, 1997. xiv+291 pp.

\bibitem{M}M.~Miranda, Jr.,
\textit{Functions of bounded variation on ``good'' metric spaces},
J. Math. Pures Appl. (9) 82  (2003),  no. 8, 975--1004.

\bibitem{S2}N. Shanmugalingam,
\textit{Harmonic functions on metric spaces},
Illinois J. Math. 45 (2001), no. 3, 1021--1050.

\bibitem{S}N. Shanmugalingam,
\textit{Newtonian spaces: An extension of Sobolev spaces to metric measure spaces},
Rev. Mat. Iberoamericana 16(2) (2000), 243--279.

\bibitem{Zie89}W. P. Ziemer,
\textit{Weakly differentiable functions. Sobolev spaces and functions of bounded variation},
Graduate Texts in Mathematics, 120. Springer-Verlag, New York, 1989. 

\end{thebibliography}
\end{document}